\newtheorem*{rep@theorem}{\rep@title}
\newcommand{\newreptheorem}[2]{%
	\newenvironment{rep#1}[1]{%
		\def\rep@title{#2 \ref{##1}}%
		\begin{rep@theorem}}%
		{\end{rep@theorem}}}
\newtheorem{lemma}{Lemma}[section]
\newtheorem{theorem}{Theorem}[section]
\newtheorem{definition}{Definition}[section]
\newtheorem{proposition}{Proposition}[section]
\newtheorem{corollary}{Corollary}[section]
\def\p{\partial}
\title{A Framework for Gluing Harmonic Maps}
\author{Shaozong Wang}
\date{\today}
\begin{document}
\begin{sloppypar}
		
\begin{abstract}
	In this paper, we study the gluing construction of the extended harmonic maps between Riemannian manifolds. Harmonic maps are critical points of the energy functional. We construct the gluing map of the extended harmonic maps from Riemann surfaces to some Riemannian manifold $N$ under certain conditions.
\end{abstract}

\pagenumbering{arabic}
		
\maketitle
		
\tableofcontents
		
\section{Introduction}

\subsection{Harmonic Maps}

This subsection is taken from Lin and Wang \cite[Section 1.1, p. 1]{lin_wang}. Let $f: (M, g) \rightarrow (N, h)$ be a smooth map between smooth Riemannian manifolds. For any fixed $p \in M$, there exist two normal coordinate charts $U_p \subset M$ of $p$ and $V_q \subset N$ of $q = f(p)$ such that $f(U_p) \subset V_q$. The Dirichlet energy density function $e(f)$ is defined by
\begin{equation*}
	e(f) (x) \left( \equiv \left|\nabla f\right|_g^2 \right) = \frac{1}{2} \sum_{\alpha, \beta} g^{\alpha\beta} (x) h_{ij} (f(x)) \frac{\p f^i}{\p x^\alpha} \frac{\p f^j}{\p x^\beta},
\end{equation*}
where $\left(x^\alpha\right)$ and $\left(f^i\right)$ are the coordinate systems on $U_p$ and $V_q$ respectively. The Dirichlet energy of $f$ is defined as
\begin{equation*}
	E(f) := \int_M e(f) dv_g,
\end{equation*}
and we have the following definition and proposition:
\begin{definition}
	A map $f \in C^2 (M, N)$ is a harmonic map, if it is a critical point of the Dirichlet energy functional $E$.
\end{definition}

\begin{proposition}
	\label{ELEquation}
	A map $f \in C^2 (M, N)$ is a harmonic map iff $f$ satisfies
	\begin{equation*}
		g^{\alpha\beta} \left( f^k_{\alpha\beta} - (\Gamma^M)^\gamma_{\alpha\beta} f^k_\gamma + (\Gamma^N)^k_{ij} (f) f^i_\alpha f^j_\beta \right) \frac{\p}{\p y^k}= 0,
	\end{equation*}
	on $M$, where we denote $\frac{\p f}{\p x^\alpha}$ by $f_\alpha$, and $\Gamma^M$, $\Gamma^N$ are the Christoffel symbols of the metric on $M$ and $N$, respectively.
\end{proposition}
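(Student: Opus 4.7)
The plan is to derive the stated PDE as the Euler--Lagrange equation of the Dirichlet energy $E$ via a standard first variation, working locally in the normal charts $U_p, V_q$ and then patching. Take a smooth one-parameter family $\{f_t\}_{t \in (-\varepsilon, \varepsilon)}$ of maps with $f_0 = f$ and with $f_t$ differing from $f$ only in a compact subset of $U_p$, and set $V^k := \tfrac{d}{dt}\big|_{t=0} f_t^k$. I will compute $\tfrac{d}{dt}\big|_{t=0} E(f_t)$, integrate by parts to move all derivatives off $V$, and read off the integrand as $V$ paired against the stated tension-field expression. Since $V$ is arbitrary and $h$ is non-degenerate, the fundamental lemma of the calculus of variations will then give the proposition; because the bracketed expression is the coordinate form of the globally defined tension field $\tau(f) \in \Gamma(f^*TN)$, the local identity patches to hold on all of $M$.

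Differentiating $e(f_t) = \tfrac12 g^{\alpha\beta} h_{ij}(f_t) (f_t)^i_\alpha (f_t)^j_\beta$ and using the symmetry of $h$,
\begin{equation*}
    \frac{d}{dt}\Big|_{t=0} e(f_t) = \tfrac12 g^{\alpha\beta} \frac{\p h_{ij}}{\p y^k}(f)\, V^k f^i_\alpha f^j_\beta + g^{\alpha\beta} h_{ij}(f)\, V^i_\alpha f^j_\beta.
\end{equation*}
Multiplying by $dv_g = \sqrt{\det g}\, dx$ and integrating the second summand by parts using the divergence identity $\tfrac{1}{\sqrt{\det g}} \p_\alpha\bigl(\sqrt{\det g}\, g^{\alpha\beta} X_\beta\bigr) = g^{\alpha\beta}(\p_\alpha X_\beta - (\Gamma^M)^\gamma_{\alpha\beta} X_\gamma)$ (no boundary term, since $V$ has compact support), and then expanding $\p_\alpha(h_{ij}(f) f^j_\beta)$, produces a term in $f^j_{\alpha\beta}$, a term in $(\Gamma^M)^\gamma_{\alpha\beta} f^j_\gamma$, and a second $\tfrac{\p h}{\p y}$ contribution to be combined with the one from the density variation above.

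The two $\tfrac{\p h}{\p y}$ contributions are then paired. Using the symmetry of $g^{\alpha\beta} f^i_\alpha f^j_\beta$ in $(i,j)$ to symmetrize indices, and then the Koszul identity $\p_k h_{ij} = h_{lj}(\Gamma^N)^l_{ki} + h_{il}(\Gamma^N)^l_{kj}$ (i.e.\ $\nabla h = 0$ written out in coordinates), the resulting symmetric combination $\tfrac12(\p_k h_{ij} + \p_j h_{ik} - \p_i h_{jk})$ collapses exactly to $h_{il}(\Gamma^N)^l_{jk}$. After relabeling one obtains
\begin{equation*}
    \frac{d}{dt}\Big|_{t=0} E(f_t) = -\int_M h_{kl}(f)\, V^k\, g^{\alpha\beta}\bigl(f^l_{\alpha\beta} - (\Gamma^M)^\gamma_{\alpha\beta} f^l_\gamma + (\Gamma^N)^l_{ij}(f) f^i_\alpha f^j_\beta\bigr)\, dv_g,
\end{equation*}
and the fundamental lemma finishes the argument as outlined in the first paragraph.

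The main obstacle is the index bookkeeping in this last step: one must symmetrize the correct pair of indices in the two $\tfrac{\p h}{\p y}$ terms and then recognise the signed combination of first derivatives of $h$ as the Christoffel symbol of the second kind on $N$. Everything else (the variation of the density, the divergence identity on $M$, and the appeal to the fundamental lemma) is entirely routine calculus-of-variations machinery.
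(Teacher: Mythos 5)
Your argument is correct. The paper itself does not supply a proof of Proposition \ref{ELEquation}: the surrounding text states that the entire subsection is imported from Lin and Wang as background material, so there is no internal proof to compare against. What you have written is precisely the canonical Euler--Lagrange derivation found in that reference and in standard harmonic-map sources: compute the first variation of the Dirichlet density, integrate by parts via the divergence identity $\tfrac{1}{\sqrt{\det g}}\p_\alpha(\sqrt{\det g}\,g^{\alpha\beta}X_\beta) = g^{\alpha\beta}(\p_\alpha X_\beta - (\Gamma^M)^\gamma_{\alpha\beta}X_\gamma)$, symmetrize the two $\p_k h_{ij}$ contributions using the symmetry of $g^{\alpha\beta}f^i_\alpha f^j_\beta$, and recognize the signed combination $\tfrac12(\p_j h_{ik}+\p_k h_{ij}-\p_i h_{jk})=h_{il}(\Gamma^N)^l_{jk}$ via the Koszul formula. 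The index bookkeeping you flag as the delicate point checks out, the resulting first-variation formula is right, and the restriction to variations supported in a single chart plus the coordinate-independence of the tension field correctly yields the global iff statement. No gaps.
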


\subsection{Related Works}

There is extensive work on gluing in context of connections, metrics, and pseudoholomorphic curves. Taubes \cite{TauSelfDual} \cite{taubes1983} \cite{TauIndef} \cite{TauFrame} discussed gluing for anti-self-dual (ASD) connections and Yang-Mills connections. Gluing for ASD connections has also been explored by Donaldson \cite{DonConn}, Mrowka \cite{MrowkaThesis}, Feehan and Leness \cite{FLKM1}. Gluing for Seiberg-Witten monopoles is discussed by Fr{\o}yshov \cite{Froyshov_2008} and G.J. Parker \cite{parker2024gluingmathbbz2harmonicspinors}, and for Non-Abelian monopoles by Feehan and Leness \cite{FL3}. Brendle \cite{brendle2003constructionsolutionsyangmillsequations} discussed gluing for Yang-Mills connections. Brendle and Kapouleas \cite{BK} studied the gluing method for Eguchi-Hanson metrics. Kapouleas \cite{Kapouleas} studied gluing for minimal immersions. Breiner, Kapouleas, and Kleene \cite{BreinerKapouleasKleene} studied gluing for constant mean curvature surfaces. Additionally, gluing in the context of pseudoholomorphic curves is covered in  Fukaya \cite{Fukaya_Oh_Ohta_Ono_kuranishi_structures_virtual_fundamental_chains}, Hutchings and Taubes \cite{taubes_obstruction} \cite{taubes_obstruction2}, Abouzaid \cite{abouzaid}, McDuff and Salamon \cite{McDuffSalamon2}, as well as McDuff and Wehrheim \cite{McDuff_Wehrheim_2017gt} \cite{McDuff_Wehrheim_2017plms}. Malchiodi, Rupflin, and Sharp \cite{malchiodi2023lojasiewiczinequalitiesnearsimple} and Rupflin \cite{rupflin2022lojasiewiczinequalitiesharmonicmaps} considered gluing for almost-harmonic maps. Chen and Tian \cite{ChenTian} studied energy estimates for harmonic maps.

\subsection{Main Results}

In this thesis we consider two harmonic maps, $f_1: \Sigma_1 \rightarrow N$ and $f_2: \Sigma_2 \rightarrow N$, where $\Sigma_1$ and $\Sigma_2$ are Riemann surfaces and $N$ is a closed Riemannian manifold. We consider under what conditions can we glue these two maps and, when the gluing map exists, what properties does the gluing map possess.

\subsection{Pregluing of Manifolds and Maps}
\label{gluing_set_up}

We are given two Riemann surfaces, $\Sigma_1$ and $\Sigma_2$, a closed Riemannian manifold $N$, and two harmonic maps $f_1: \Sigma_1 \rightarrow N$ and $f_2: \Sigma_2 \rightarrow N$. Suppose $f_1 (x_1) = f_2 (x_2)$, then we connect $\Sigma_1$ and $\Sigma_2$ by punching holes at $x_1$ and $x_2$, then gluing them by a neck, which are annuli $A_i(\delta, R)$ centered at $x_i$ with radius $r$,  $\delta /2R < r < 2/\delta R$, $i = 1, 2$.  Denote this by $\Sigma_1 \#_{\delta, R} \Sigma_2$, where $\delta$ and $R$ are parameters of the neck. We use cutoff functions to piece together the two maps. See Section \ref{pregluing_detail} for details.

\subsection{Existence of the Extended Gluing Map}

We assume that we are a Riemann surface $\Sigma$ and a closed Riemannian manifold $N$. Consider a smooth map $f: \Sigma \rightarrow N$. For $\xi \in f^{-1} TN$, we can consider the perturbation of $f$ by $\xi$ under the exponential map, which we write as $\exp_f (\xi)$. We define $W^{2, p} (\Sigma, f^{-1} TN)$ to be the Sobolev space consisting all $\xi$ that are $W^{2, p}$ in each coordinate chart, with the usual Sobolev space structure. We can define $L^p (\Sigma, f^{-1} TN)$ spaces similarly.

We construct a section of $\left(\exp_f (\xi)\right)^{-1} TN$ where each component is obtained by plugging the perturbation into the equations in Proposition \ref{ELEquation}:
\begin{equation*}
	g^{\alpha\beta} \left( (\exp_f (\xi))^k_{\alpha\beta} - (\Gamma^M)^\gamma_{\alpha\beta} (\exp_f (\xi))^k_\gamma + (\Gamma^N)^k_{ij} (\exp_f (\xi)) (\exp_f (\xi))^i_\alpha (\exp_f (\xi))^j_\beta \right) \frac{\p}{\p y^k}.
\end{equation*}
We get a section of $f^{-1} TN$ when we compose the above with parallel transport from $\exp_f (\xi)$ to $f$. Thus we define the harmonic map operator $\mathcal{F}$ from the space $W^{2, p}_{f}$ to $L^p_{f}$ as
\begin{align*}
	\mathcal{F}_f(\xi) &:= \Phi_f(\xi)^{-1}\left(g^{\alpha\beta} \left( (\exp_f (\xi))^k_{\alpha\beta} - (\Gamma^M)^\gamma_{\alpha\beta} (\exp_f (\xi))^k_\gamma +\right.\right. \\
	&\left. \left. (\Gamma^N)^k_{ij} (\exp_f (\xi)) (\exp_f (\xi))^i_\alpha (\exp_f (\xi))^j_\beta \right) \frac{\p}{\p y^k}\right).
\end{align*}
where $g$ denotes the metric on $\Sigma$, and $\Phi$ is the parallel transport from $f$ to $\exp_f (\xi)$ along the geodesic $\exp_f (t\xi)$. See Section \ref{operator_def} for details.

Now we define the domain of our gluing map, which is a space of pairs of harmonic maps satisfying certain conditions. These conditions will allow us to carry out a construction similar to that in McDuff and Salamon \cite[Chapter 10]{mcduff}. The following definition is an analog of the definition of $\mathcal{M} (c)$ in McDuff and Salamon \cite[Section 10.1]{mcduff}.

\begin{definition}[Space of Harmonic Map Pairs] (Compare McDuff and Salmon \cite[Section 10.1, p. 371]{mcduff} for the analogous Definition for J-holomorphic curves)
	\label{moduli_defn_nonsurj}
	Fix domain manifolds (closed, of dimension 2, with Riemannian metric) $\Sigma_1$ and $\Sigma_2$ and closed Riemannian manifold $N$. Fix a constant $1 < p < 2$. Fix points $x_1 \in \Sigma_1$ and $x_2 \in \Sigma_2$. Let $\mathcal{M} (c, p)$ denote the set of all pairs of harmonic maps $(f_1, f_2)$ such that
	\begin{enumerate}
		\item $y := f_1 (x_1) = f_2 (x_2)$.
		\item $\|df_i\|_{L^\infty} \leq c$ and $\|d^2 f_i\|_{L^\infty} \leq c$ for $i = 1, 2$.
	\end{enumerate}
\end{definition}

In this thesis, we will need to specify the parameters of the neck in a connected sum.

\begin{definition}[Set of Parameter Pairs] (Compare McDuff and Salmon \cite[Section 10.1, p. 371]{mcduff} for the analogous definition for J-holomorphic curves)
	For any $0 < \delta_0 < 1$, we define $\mathcal{A} (\delta_0)$ to be the set of all pairs of $(\delta, R)$ such that $0 < \delta < \delta_0$ and $\delta R > 1 / \delta_0$.
\end{definition}

We will also need the following spaces and operators in the construction of the approximate inverse.
\begin{definition}(Compare McDuff and Salmon \cite[Section 10.5, p. 382]{mcduff} for the analogous Definition for J-holomorphic curves)
	\label{spaces_and_operator}
	Define the spaces
	\begin{equation*}
		W^{2,p}_f := W^{2,p} (\Sigma, f^{-1}TN),\quad L^p_f := L^p (\Sigma, f^{-1} TN)
	\end{equation*}
	for $f : \Sigma \rightarrow N$. Given $f_1: \Sigma_1 \rightarrow N, f_2 : \Sigma_2 \rightarrow N$ such that $f_1(x_1) = f_2 (x_2)$ for some $x_1 \in \Sigma_1$ and $x_2 \in \Sigma_2$, denote
	\begin{equation}
		\label{W2p}
		W^{2,p}_{f_{1, 2}} := \left\{ (\xi_1 , \xi_2) \in W^{2,p}_{f_1} \times W^{2,p}_{f_2} \mid \xi_1 (x_1) = \xi_2 (x_2)\right\}.
	\end{equation}
	We define $D_{1, 2} : W^{2,p}_{f_{1, 2}} \rightarrow L^p_{f_1} \times L^p_{f_2}$ by setting
	\begin{equation}
		\label{D0inf}
		D_{f_1, f_2} (\xi_1, \xi_2) := (D_{f_1} \xi_1 , D_{f_2} \xi_2 ).
	\end{equation}
\end{definition}
Note that for the above definition, when there is no ambiguity, we can use $D_{1, 2}$. Otherwise, we will write out the maps.

Consider $(\tilde{f}_1, \tilde{f}_2) \in \mathcal{M} (c, p)$. We will show in Lemma \ref{fredholm} that $D_{\tilde{f}_i}$ is a Fredholm operator for $i = 1, 2$. Then we can show that $D_{\tilde{f}_1, \tilde{f}_2}$ is Fredholm (see Lemma \ref{D12fredholm}), and that we can choose representatives of the cokernel that are supported away from some open neighborhoods of $x_1$ and $x_2$ (see Lemma \ref{choice2}). Thus we can make the following definition.
\begin{definition}
	\label{V}
	Consider the cokernel of $D_{\tilde{f}_1, \tilde{f}_2}$ defined as the quotient space\\
	$(L^p (\Sigma_1, \tilde{f}_1^{-1} TN) \times L^p (\Sigma_2, \tilde{f}_2^{-1} TN)) / \text{Im} D_{\tilde{f}_1, \tilde{f}_2}$. Choose linearly independent representatives of the quotient spaces such that they
	\begin{enumerate}
		\item are supported away from some open neighborhoods of $x_1$ and $x_2$,
		\item span the cokernel of $D_{\tilde{f}_1, \tilde{f}_2}$.
	\end{enumerate}
	Suppose that the set of such elements is $\{\tilde{v}_1, \cdots, \tilde{v}_k\}$. Let $\tilde{V}$ be the space spanned by $\{\tilde{v}_1, \cdots, \tilde{v}_k\}$.
\end{definition}

\begin{theorem}[Existence of the Extended Gluing Map]
	\label{existence_thm}
	For any $(\tilde{f}_1, \tilde{f}_2) \in \mathcal{M} (c, p)$, there exists a neighborhood $\mathcal{U}$ in $\mathcal{M} (c, p)$ and $\delta_0 = \delta_0 (c, p, \Sigma_1,  \Sigma_2, x_1, x_2, N, \mathcal{U})> 0$, such that for each pair of $(\delta, R) \in \mathcal{A} (\delta_0)$, there exists a gluing map $\imath_{\delta, R}: \mathcal{U} \rightarrow W^{2, p} (\Sigma_1 \#_{\delta, R} \Sigma_2, N) \times \tilde{V}$ such that each element $(\exp_{f^R} \xi, \tilde{v}) \in \imath_{\delta, R} (\mathcal{U})$ satisfies
	\begin{equation}
		\label{extended_harmonic_map_equation}
		\mathcal{F}_{f^R} (\xi) + v = 0
	\end{equation}
	where $\Sigma_1 \#_{\delta, R} \Sigma_2$ is the glued manifold as defined in \ref{weighted_metric}, and $\tilde{V}$ is defined in Definition \ref{V}, and $v = \sigma (\tilde{v})$, where $\sigma$ is defined in Definition \ref{sigma}. Furthermore, for any $\epsilon > 0$, we can choose $\delta_0 = \delta_0 (c, p, \Sigma_1,  \Sigma_2, x_1, x_2, N, \mathcal{U})$ such that, for any $(f_1, f_2) \in \mathcal{U}$, there exists $\xi \in W^{2, p} (\Sigma_1 \#_{\delta, R} \Sigma_2)$ satisfying
	\begin{equation*}
		\imath_{\delta, R} ((f_1, f_2)) = (\exp_{f^R} \xi, \tilde{v}), \quad \|(\xi^R, \tilde{v})\|_{2, p, R, V} < \epsilon
	\end{equation*}
	where $f^R$ denotes the pregluing of $f_1, f_2$ defined in (\ref{fR}), and the norm is defined in Definition \ref{weighted_with_V}.
	
	In particular, consider 
	\begin{equation*}
		\imath_{\delta, R} ((f_1, f_2)) \mid_{\tilde{V}} = 0.
	\end{equation*}
	If there are elements in $\imath_{\delta, R} (\mathcal{U})$ that satisfy the above equation, then these elements form a subset of the image of the gluing map consisting of harmonic maps. Otherwise, there is no harmonic map in the image of the gluing map.
\end{theorem}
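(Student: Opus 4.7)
The plan is to solve the nonlinear equation $\mathcal F_{f^R}(\xi) + \sigma(\tilde v) = 0$ by a Newton--Picard contraction on the glued surface, following the scheme developed for $J$-holomorphic curves in McDuff--Salamon \cite[Chapter 10]{mcduff}. The augmented linear operator to invert is
\[\mathcal L_R \colon W^{2,p}_{f^R} \times \tilde V \longrightarrow L^p_{f^R}, \qquad (\xi, \tilde v) \mapsto D_{f^R} \xi + \sigma(\tilde v),\]
and the goal is to construct a right inverse $Q_R$ bounded uniformly in $(\delta, R) \in \mathcal A(\delta_0)$, then apply the Banach fixed point theorem to
\[T(\xi, \tilde v) := -Q_R\bigl(\mathcal F_{f^R}(0) + N_R(\xi)\bigr), \qquad N_R(\xi) := \mathcal F_{f^R}(\xi) - \mathcal F_{f^R}(0) - D_{f^R} \xi,\]
on a small ball in the weighted norm of Definition \ref{weighted_with_V}.

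Three estimates, uniform in $(\delta, R)$, drive the argument. \textbf{(i) Small pregluing defect:} $\|\mathcal F_{f^R}(0)\|_{L^p} \to 0$ as $R \to \infty$, uniformly on $\mathcal U$. Away from the neck $f^R \equiv f_i$ is harmonic so $\mathcal F$ vanishes; on the neck, $f_1(x_1) = f_2(x_2)$ together with the $C^2$-bounds from Definition \ref{moduli_defn_nonsurj} control the cutoff interpolation, with the weighted metric absorbing the $1/r$-factors produced by differentiating the cutoffs. \textbf{(ii) Uniformly bounded right inverse:} given $\eta \in L^p_{f^R}$, split $\eta$ by cutoffs into pieces on $\Sigma_1$ and $\Sigma_2$, invert the Fredholm operator $D_{\tilde f_1, \tilde f_2} \oplus \sigma$ on the disjoint union (surjective by Lemma \ref{D12fredholm} and the choice of cokernel representatives in Definition \ref{V}), and transplant the resulting sections back to $\Sigma_1 \#_{\delta, R} \Sigma_2$ via cutoffs; since each $\tilde v_j$ is supported away from $x_1, x_2$, the $\tilde V$-component transfers unchanged. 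The patched operator $P_R$ satisfies $\mathcal L_R P_R = I + E_R$ with $E_R$ supported in the neck, and a Neumann series gives $Q_R = P_R (I + E_R)^{-1}$ with bounded norm once $\|E_R\| < 1$. \textbf{(iii) Quadratic estimate:}
\[\|N_R(\xi_1) - N_R(\xi_2)\|_{L^p} \leq C \bigl(\|\xi_1\| + \|\xi_2\|\bigr)\, \|\xi_1 - \xi_2\|,\]
obtained by Taylor-expanding $\exp_{f^R}$, the Christoffel symbols of $N$, and the parallel transport $\Phi_{f^R}$, using the Sobolev embedding $W^{2,p} \hookrightarrow C^0$ on a surface (valid since $p > 1$).

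The hard part will be (ii): one must show $\|E_R\| < 1$ in the weighted operator norm, uniformly in $R$. This hinges on choosing the weighted norm on the neck so that no approximate kernel of $D_{f^R}$ concentrates there (an annulus lemma of Hardy type), and is the analogue of the most technical portion of McDuff--Salamon's $J$-holomorphic gluing. Given all three estimates, the standard contraction argument produces a unique fixed point $(\xi, \tilde v)$ of $T$ in the $\epsilon$-ball, provided $\delta_0$ is chosen so that $\|Q_R\| \cdot \|\mathcal F_{f^R}(0)\|_{L^p} < \epsilon/2$ and the quadratic term is also controlled by $\epsilon/2$. Setting $\imath_{\delta, R}(f_1, f_2) := (\exp_{f^R} \xi, \tilde v)$ defines the gluing map and delivers both \eqref{extended_harmonic_map_equation} and the norm bound. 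The concluding dichotomy is automatic: if $\tilde v = 0$ then $v = \sigma(\tilde v) = 0$ and $\mathcal F_{f^R}(\xi) = 0$, so $\exp_{f^R} \xi$ is harmonic; if $\tilde v \neq 0$ throughout $\imath_{\delta, R}(\mathcal U)$, then $v \neq 0$ and \eqref{extended_harmonic_map_equation} precludes any image map from being harmonic.
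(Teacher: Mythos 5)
Your proposal follows essentially the same route as the paper: it constructs the approximate right inverse by splitting, inverting $D \oplus \sigma$ on the pieces, and patching (Definition \ref{approx_right_inverse_defn} and Proposition \ref{approx_right_inverse}), upgrades it via Neumann series (Definition \ref{real_right_inverse}), controls $\mathcal F_{f^R}(0)$ and the deviation $d\mathcal F_{f^R}(\xi) - D_{f^R}$ (Lemma \ref{difference}), and closes with a Newton--Picard contraction, which is precisely the content of the Implicit Function Theorem the paper applies (Proposition \ref{implicit_general} / Theorem \ref{tailored_IFT}). The only organizational difference is that you invert $D_{\tilde f_1, \tilde f_2} \oplus \sigma$ at the fixed basepoint whereas the paper inverts $Q_{0,\infty,r}$ at the interpolated maps $f^{0,r}, f^{\infty,r}$ and then compares (Propositions \ref{space_equivalence} and \ref{D_perturbation}), which just shifts where the small-perturbation comparison is made.
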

Note that the last paragraph of the above Theorem reduces the infinite dimensional problem of solving a harmonic map equation on a Banach space to a finite dimensional one, which is setting
\begin{equation*}
	v = 0
\end{equation*}
in (\ref{extended_harmonic_map_equation}) and solving for $\xi$. This is similar to the ideas in Hutchings and Taubes \cite{taubes_obstruction} \cite{taubes_obstruction2} and Taubes \cite{taubes_gauge} \cite{taubes_pseudo}.

In particular, we get the following corollary by letting $\Sigma_2 = S^2$.
\begin{corollary}
	\label{existence_coro}
	Let $\Sigma_2 = S^2$. For any $(\tilde{f}_1, \tilde{f}_2) \in \mathcal{M} (c, p)$, there exists a neighborhood $\mathcal{U}$ in $\mathcal{M} (c, p)$ and $\delta_0 = \delta_0 (c, p, \Sigma_1, x_1, x_2, N, \mathcal{U})> 0$, such that for each pair of $(\delta, R) \in \mathcal{A} (\delta_0)$, there exists a gluing map $\imath_{\delta, R}: \mathcal{U} \rightarrow W^{2, p} (\Sigma_1 \#_{\delta, R} S^2, N) \times \tilde{V}$ such that each element $(\exp_{f^R} \xi, \tilde{v}) \in \imath_{\delta, R} (\mathcal{U})$ satisfies
	\begin{equation}
		\label{extended_harmonic_map_equation}
		\mathcal{F}_{f^R} (\xi) + v = 0
	\end{equation}
	where $\Sigma_1 \#_{\delta, R} S^2$ is the glued manifold as defined in \ref{weighted_metric}, and $\tilde{V}$ is defined in Definition \ref{V}, and $v = \sigma (\tilde{v})$, where $\sigma$ is defined in Definition \ref{sigma}. Furthermore, for any $\epsilon > 0$, we can choose $\delta_0 = \delta_0 (c, p, \Sigma_1, x_1, x_2, N, \mathcal{U})$ such that, for any $(f_1, f_2) \in \mathcal{U}$, there exists $\xi \in W^{2, p} (\Sigma_1 \#_{\delta, R} S^2)$ satisfying
	\begin{equation*}
		\imath_{\delta, R} ((f_1, f_2)) = (\exp_{f^R} \xi, \tilde{v}), \quad \|(\xi^R, \tilde{v})\|_{2, p, R, V} < \epsilon
	\end{equation*}
	where $f^R$ denotes the pregluing of $f_1, f_2$ defined in (\ref{fR}), and the norm is defined in Definition \ref{weighted_with_V}.
	
	In particular, consider 
	\begin{equation*}
		\imath_{\delta, R} ((f_1, f_2)) \mid_{\tilde{V}} = 0.
	\end{equation*}
	If there are elements in $\imath_{\delta, R} (\mathcal{U})$ that satisfy the above equation, then these elements form a subset of the image of the gluing map consisting of harmonic maps. Otherwise, there is no harmonic map in the image of the gluing map.
\end{corollary}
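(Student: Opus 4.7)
The plan is to obtain Corollary \ref{existence_coro} as an immediate specialization of Theorem \ref{existence_thm}. Theorem \ref{existence_thm} is stated for arbitrary closed Riemann surfaces $\Sigma_1, \Sigma_2$ with marked points $x_1, x_2$ and arbitrary closed Riemannian target $N$, so the assertion of the corollary is precisely that theorem with the additional choice $\Sigma_2 = S^2$, equipped with any fixed smooth Riemannian metric (for concreteness, the round one). Consequently, no additional analytic work is required beyond invoking the theorem.

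More precisely, I would check that every ingredient used in the statement of Theorem \ref{existence_thm} depends on $\Sigma_2$ only through its Riemannian structure and the marked point $x_2$: the pregluing $\Sigma_1 \#_{\delta, R} \Sigma_2$ from Section \ref{gluing_set_up}, the weighted Sobolev norms of Definition \ref{weighted_with_V}, the linearized operators $D_{\tilde{f}_i}$ and $D_{\tilde{f}_1, \tilde{f}_2}$, the chosen cokernel representatives $\{\tilde{v}_1, \ldots, \tilde{v}_k\}$ from Definition \ref{V}, the space $\tilde{V}$, and the map $\sigma$. Since $S^2$ with any fixed smooth metric is a closed Riemannian $2$-manifold in the sense required by Definition \ref{moduli_defn_nonsurj}, all of these constructions apply with $\Sigma_2 = S^2$. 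Therefore the neighborhood $\mathcal{U}$, the constant $\delta_0$, and the gluing map $\imath_{\delta, R}$ produced by Theorem \ref{existence_thm} specialize to the objects claimed in the corollary, and the dependence list $\delta_0(c, p, \Sigma_1, x_1, x_2, N, \mathcal{U})$ stated in the corollary is exactly what results from the theorem's list $\delta_0(c, p, \Sigma_1, \Sigma_2, x_1, x_2, N, \mathcal{U})$ after fixing $\Sigma_2 = S^2$.

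The extended harmonic map equation $\mathcal{F}_{f^R}(\xi) + v = 0$, the smallness bound $\|(\xi^R, \tilde{v})\|_{2, p, R, V} < \epsilon$, and the final finite-dimensional reduction that identifies the genuine harmonic maps in $\imath_{\delta, R}(\mathcal{U})$ with the solutions of $\imath_{\delta, R}((f_1, f_2))\mid_{\tilde{V}} = 0$ all transfer verbatim. Because the argument is a direct restriction of the more general result, there is no genuine obstacle; the only thing to verify is that $S^2$ with a chosen smooth metric really does play the role of $\Sigma_2$ in the hypotheses of Definition \ref{moduli_defn_nonsurj} and in the pregluing of Section \ref{gluing_set_up}, which is immediate.
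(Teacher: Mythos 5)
Your proof is correct and takes the same approach as the paper, which obtains Corollary \ref{existence_coro} simply by specializing Theorem \ref{existence_thm} to $\Sigma_2 = S^2$. The paper states this specialization explicitly (``In particular, we get the following corollary by letting $\Sigma_2 = S^2$'') and gives no further argument, so your more careful check that every ingredient of the theorem applies with $\Sigma_2 = S^2$ matches and slightly elaborates on the intended reasoning.
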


\subsection{Outline}
In Section \ref{basic_setup}, we introduce the setup of the problem. We define the harmonic map operator and compute the linearization. Then we use the same idea as in Donaldson and Kronheimer \cite[Proposition 7.2.28]{donaldson_kronheimer} to construct a surjective operator. In Section \ref{existence_of_the_gluing_map}, we write out the details of the construction of the gluing map. We introduce the Implicit Function Theorem used in the proof, and the pregluing construction. Then we construct the approximate inverse and from that construct the real right inverse. Finally, we check the conditions needed in the Implicit Function Theorem and show the existence of the gluing map. The appendix contains technical details such as norm and derivative estimates, and related facts in elliptic PDE theory and functional analysis.

\subsection{Acknowledgement}

The author expresses deep gratitude to Professor Paul Feehan for his invaluable guidance and unwavering support, and to Professor Dan Ketover for his insightful comments and suggestions. Special thanks are also extended to Professor Jason Lotay, Professor Thomas H. Parker, Zilu Ma, Gregory Parker, Junsheng Zhang, Xiao Ma, Liuwei Gong, and Jiakai Li for their helpful discussions. This work is also based in part on research supported by the National Science Foundation under Grant No. 1440140, while the author was in residence at the Simons Laufer Mathematical Sciences Institute in Berkeley, California, during Fall 2022 as an associate of the program Analytic and Geometric Aspects of Gauge Theory.

\section{Basic Setup}
\label{basic_setup}

\subsection{Pregluing of Manifolds and Maps}
\label{pregluing_detail}

The following setup and notation are similar to those in chapter 10 of \cite{mcduff}.

Consider gluing $f^0$ and $f^\infty$ whose domains are both $S^2$ and codomains $N$. We denote
\begin{equation*}
	y := f^0 (0) = f^\infty (\infty).
\end{equation*}

\begin{center}
	\begin{tikzpicture}
		
		\def\shiftX{-5}
		
		\draw[thick] (\shiftX,0) circle (2cm);
		\draw[thick] (\shiftX+2,0) arc (0:180:2cm and 0.5cm);
		\draw[dashed] (\shiftX-2,0) arc (180:360:2cm and 0.5cm);
		\draw[thick] (\shiftX,-2) arc (-90:90:0.5cm and 2cm);
		\draw[dashed] (\shiftX,2) arc (90:270:0.5cm and 2cm);
		
		\node[above] at (\shiftX,2) {\(\infty\)};
		\node[below] at (\shiftX,-2) {\(0\)};
		
		\draw[thick] (\shiftX,6) circle (2cm);
		\draw[thick] (\shiftX+2,6) arc (0:180:2cm and 0.5cm);
		\draw[dashed] (\shiftX-2,6) arc (180:360:2cm and 0.5cm);
		\draw[thick] (\shiftX,4) arc (-90:90:0.5cm and 2cm);
		\draw[dashed] (\shiftX,8) arc (90:270:0.5cm and 2cm);
		
		\node[above] at (\shiftX,8) {\(\infty\)};
		\node[below] at (\shiftX,4) {\(0\)};
		
		\draw[thick] plot [smooth cycle] coordinates {(3,3) (4,5) (6,6) (7,4) (5,2)};
		\node at (5.2,4) {\(N\)};
		
		\draw[->, thick] (\shiftX+2,1) to [bend left=20] (3,4);
		\node[above] at (0.2,2.5) {\( f^\infty \)};
		
		\draw[->, thick] (\shiftX+2,6.5) to [bend left=20] (3,5);
		\node[above] at (0.2, 5.5) {\( f^0 \)};
		
	\end{tikzpicture}
\end{center}

Since $S^2$ is compact, we know there exists $c>0$ such that $\|df^0\|_{L^{\infty}}\leq c$ and $\|df^\infty\|_{L^{\infty}}\leq c$ (Here the norms are in the sense of the round metric).

Let $\epsilon$ be less than the injective radius of $(N, h)$. By the upper bound of the differential of the harmonic maps, we can compute that when $\cot\frac{\epsilon}{c} < |z| < \tan\frac{\epsilon}{c}$, we have $d(f^0(z), y)<\epsilon$ and $d(f^\infty(z), y)<\epsilon$. Thus there exists $\zeta^0(z), \zeta^\infty(z) \in T_y N$ such that $f^0(z)=\exp_y (\zeta^0(z))$ and $f^\infty(z) = \exp_y (\zeta^\infty(z))$.

Consider some fixed nondecreasing smooth function $\rho$ satisfying:
\begin{equation*}
	\rho(z)=\left\{
	\begin{aligned}
		&0,\quad |z|\leq 1,\\
		&1,\quad |z|\geq 2.\\
	\end{aligned}
	\right.
\end{equation*}

We define our pre-glued map $f^R$ by the following formula:
\begin{align}
	\begin{split}
		\label{fR}
		&f^R(z) := f^{\delta,R}(z) =\\
		&\left\{
		\begin{aligned}
			&f^0(z),\quad &|z|\geq \frac{2}{\delta R},\\
			&\exp_y \left(\rho(\delta R z)\zeta^0(z) + \rho\left(\frac{\delta}{Rz}\right)\zeta^\infty(R^2 z)\right),\quad &\frac{\delta}{2R} \leq |z| \leq \frac{2}{\delta R},\\
			&f^\infty(R^2 z),\quad &|z|\leq \frac{\delta}{2R}.
		\end{aligned}
		\right.
	\end{split}
\end{align}

We will need to specify which weighted norm we are using. We use the same weight as on page 376 of McDuff and Salamon \cite[Section 10.3]{mcduff} for the reason mentioned on the same page. Namely, 

\begin{equation*}
	\theta^{R} (z) =\left\{
	\begin{aligned}
		&R^{-2} + R^2 |z|^2,\quad &|z| \leq 1/R,\\
		&1 + |z|^2,\quad &|z| \geq 1/R.\\
	\end{aligned}
	\right.
\end{equation*}

and we let the metric on $S^2$ be
\begin{equation}
	\label{weighted_metric}
	g^R = (\theta^R)^{-2} (ds^2 + dt^2).
\end{equation}
This metric agrees with the metric from $S^2$ by stereographic projection (the Fubini-Study metric) outside radius $1/R$ and, after rescaling, also inside that radius. Thus it defines a metric on $S^2 \#_{\delta, R} S^2$.

For general closed Riemannian manifolds of dimension two, we can choose sufficiently small disks around the point where we intend to glue, and then use the same metrics as above on these disks. This metric will be equivalent to the original Riemannian metrics.

What is different from McDuff and Salamon \cite[Chapter 10]{mcduff} is that, we consider the weighted $W^{2, p}$ norm, where $1 < p < 2$, compared to the weighted $W^{1, p}$ norm where $p > 2$ in McDuff and Salamon. Given the weighted metric in (\ref{weighted_metric}). We define the weighted norms, which we call $(0, p, R)$/$(1, p, R)$/$(2, p, R)$ norms, similar to the definitions on page 376 of McDuff and Salamon \cite{mcduff}. The main difference is that we are only considering sections of the tangent bundles, while McDuff and Salmon also considers 1-forms:
\begin{align}
	&\|\xi\|_{0, p, R} := \left(\int_{\mathbb{C}} \theta^R (z)^{-2} |\xi(z)|^p \right)^{1/p}, \label{0pR}\\
	&\|\xi\|_{1, p, R} := \left(\int_{\mathbb{C}} \theta^R (z)^{-2} |\xi(z)|^p + \theta^R (z)^{p-2} |\nabla \xi(z)|^p \right)^{1/p}, \label{1pR}\\
	&\|\xi\|_{2, p, R} := \left(\int_{\mathbb{C}} \theta^R (z)^{-2} |\xi(z)|^p + \theta^R (z)^{p-2} |\nabla \xi(z)|^p + \theta^R (z)^{2p-2} |\nabla^2 \xi(z)|^p \right)^{1/p}. \label{2pR}
\end{align}
When gluing a Riemann surface $\Sigma_1$ with $S^2$, let $x_1$ be the point where we glue on $\Sigma_1$. We choose a normal coordinate chart $(U_1, \phi_1)$ centered at $x_1$. Since $\Sigma_1$ is compact, we can choose finitely many points $\{p_1, \cdots, p_{n_1}\}$ and corresponding neighborhoods (not containing $x_1$) $\{B_{p_1} (r_1), \cdots, B_{p_{n_1}} (r_1)\}$, such that these open balls together with $U_1$ cover $\Sigma_1$. Denote the metric of $\Sigma_1$ by $g_1$. Let $(s, t)$ be the coordinates of $U_1$, and $z = s + it$, and $x_1 = (0, 0)$ in this coordinate chart. We define the norms to be
\begin{align*}
	\|\xi\|_{0, p, R} &:= \sum_{i = 1}^{n_1} \left(\int_{B_{p_i}(r_1)} |\xi(z)|^p dv_{g_1} \right)^{1/p} + \left(\int_{U_1 \backslash\{|z| \leq 1 / R\}} |\xi(z)|^p dv_{g_1} \right)^{1/p}\\
	& + \left(\int_{\{|z| \leq 1 / R\}} \theta^R(z)^{-2} |\xi(z)|^p dsdt \right)^{1/p},\\
	\|\xi\|_{1, p, R} &:= \sum_{i = 1}^{n_1} \left(\int_{B_{p_i}(r_1)} |\xi(z)|^p + |\nabla \xi(z)|^p dv_{g_1} \right)^{1/p} \\
	&+ \left(\int_{U_1 \backslash\{|z| \leq 1 / R\}} |\xi(z)|^p + |\nabla \xi(z)|^p dv_{g_1} \right)^{1/p} \\
	&+ \left(\int_{\{|z| \leq 1 / R\}} \theta^R (z)^{-2} |\xi(z)|^p + \theta^R (z)^{p-2} |\nabla \xi(z)|^p dsdt\right)^{1/p}, \\
	\|\xi\|_{2, p, R} &:= \sum_{i = 1}^{n_1} \left(\int_{B_{p_i}(r_1)} |\xi(z)|^p + |\nabla \xi(z)|^p + |\nabla^2 \xi(z)|^p dv_{g_1} \right)^{1/p} \\
	&+ \left(\int_{U_1 \backslash\{|z| \leq 1 / R\}} |\xi(z)|^p + |\nabla \xi(z)|^p + |\nabla^2 \xi(z)|^p dv_{g_1} \right)^{1/p} \\
	&\left(\int_{\mathbb{C}} \theta^R (z)^{-2} |\xi(z)|^p + \theta^R (z)^{p-2} |\nabla \xi(z)|^p + \theta^R (z)^{2p-2} |\nabla^2 \xi(z)|^p \right)^{1/p}. 
\end{align*}
We know $g_1$ is uniformly equivalent to
\begin{equation*}
	\theta^R (z) (ds^2 + dt^2)
\end{equation*}
on $U_1 \backslash \{|z| \leq 1 / R\}$. Thus when considering the norms on the neck, we can still use the same metric as if $\Sigma_1$ were $S^2$.

When gluing Riemann surfaces $\Sigma_1$ and $\Sigma_2$, let $x_2$ be the point where we glue on $\Sigma_2$. We choose normal coordinate chart $\{U_2, \phi_2\}$ centered at $x_2$ and open balls $\{B_{q_1} (r_2), \cdots, B_{q_{n_2}} (r_2)\}$ in the same way as for $\Sigma_1$. Let $\tilde{z} = \tilde{s} + i\tilde{t}$ be the coordinates on $U_2$ with $x_2$ as $(0, 0)$. We define the norms to be
\begin{align*}
	\|\xi\|_{0, p, R} &:= \sum_{i = 1}^{n_1} \left(\int_{B_{p_i}(r_1)} |\xi(z)|^p dv_{g_1} \right)^{1/p} + \left(\int_{U_1 \backslash\{|z| \leq 1 / R\}} |\xi(z)|^p dv_{g_1} \right)^{1/p}\\
	& + \sum_{i = 1}^{n_2} \left(\int_{B_{q_i}(r_2)} |\xi(z)|^p dv_{g_1} \right)^{1/p} + \left(\int_{U_2 \backslash\{|\tilde{z}| \leq 1 / R\}} |\xi(z)|^p dv_{g_2} \right)^{1/p},\\
	\|\xi\|_{1, p, R} &:= \sum_{i = 1}^{n_1} \left(\int_{B_{p_i}(r_1)} |\xi(z)|^p + |\nabla \xi(z)|^p dv_{g_1} \right)^{1/p} \\
	&+ \left(\int_{U_1 \backslash\{|z| \leq 1 / R\}} |\xi(z)|^p + |\nabla \xi(z)|^p dv_{g_1} \right)^{1/p} \\
	& + \sum_{i = 1}^{n_2} \left(\int_{B_{q_i}(r_2)} |\xi(z)|^p + |\nabla \xi(z)|^p dv_{g_2} \right)^{1/p} \\
	&+ \left(\int_{U_2 \backslash\{|\tilde{z}| \leq 1 / R\}} |\xi(z)|^p + |\nabla \xi(z)|^p dv_{g_2} \right)^{1/p},\\
	\|\xi\|_{2, p, R} &:= \sum_{i = 1}^{n_1} \left(\int_{B_{p_i}(r_1)} |\xi(z)|^p + |\nabla \xi(z)|^p + |\nabla^2 \xi(z)|^p dv_{g_1} \right)^{1/p} \\
	&+ \left(\int_{U_1 \backslash\{|z| \leq 1 / R\}} |\xi(z)|^p + |\nabla \xi(z)|^p + |\nabla^2 \xi(z)|^p dv_{g_1} \right)^{1/p} \\
	& + \sum_{i = 1}^{n_2} \left(\int_{B_{q_i}(r_2)} |\xi(z)|^p + |\nabla \xi(z)|^p + |\nabla^2 \xi(z)|^p dv_{g_2} \right)^{1/p} \\
	&+ \left(\int_{U_2 \backslash\{|\tilde{z}| \leq 1 / R\}} |\xi(z)|^p + |\nabla \xi(z)|^p + |\nabla^2 \xi(z)|^p dv_{g_2} \right)^{1/p}.\\
\end{align*}
Note that in $\ref{weighted_metric}$, as stated in McDuff and Salamon \cite[Section 10.3, p. 376]{mcduff}, the involution $z \mapsto 1 / (R^2 z)$ is an isometry with respect to the metric $g^R$ that interchanges $\{|z| \leq 1 / R\}$ and $\{|z| \geq 1 / R\}$. Without loss of gerenality, we may assume $U_2$ is an open ball $B_r (0)$ in normal coordinates centered at $x_2$. In this case, the metric $g_2$ on $B_r (0) \backslash \{|z| \leq 1 / R\}$ is uniformly equivalent to the metric $g^R$ on $\{ 1/ (R^2 r) \leq |z| \leq 1 / R\}$ ($R$ can be arbitrarily large) under the above involution. Thus when considering the neck, we can treat $\Sigma_2$ as if it were $S^2$.

In the rest of this section, we will get estimates that will be useful in our construction of the right inverse. Let us denote the $(0, p, R)$-norm (resp. $(2, p, R)$-norm) on a certain region $\Omega$ by $\| \cdot\|_{0, p, R, \Omega}$ (resp. $\|\cdot\|_{2, p, R, \Omega}$).

\subsection{The Harmonic Map Operator}
\label{operator_def}

Let us follow McDuff and Salamon \cite[Chapter 10]{mcduff} and consider gluing harmonic maps on $S^2$ (the Riemann sphere) or any Riemannian manifolds of dimension two into $N$. For simplicity, we consider the case when $\Sigma_1 = S^2, \Sigma_2 = S^2$. However, we will see that the gluing can be used for general Riemann manifolds of dimension two.

First, let us consider the general definition of harmonic maps. Suppose $M$, $N$ are Riemannian manifolds and $f$ is a smooth map from the domain manifold $M$ to the target manifold $N$. We say $f$ is a harmonic map if it satisfies the harmonic map equation. There are multiple ways of writing the harmonic map equation. One would be
\begin{equation}
	P(f):= \triangle_g f + A(f)(df,df),
\end{equation}
where we consider an isometric embedding $N \subset \mathbb{R}^N$ and $A$ is the second fundamental form of the embedding.

However, the above will depend on the ambient Euclidean space. In particular, since we are using the implicit function theorem later, the above will make it difficult to utilize the surjectivity onto the tangent space. An alternate form would be
\begin{equation*}
	\mathrm{tr}_g (\nabla df) = 0.
\end{equation*}
In coordinates, this is
\begin{equation*}
	g^{\alpha\beta} \left( f^k_{\alpha\beta} - (\Gamma^M)^\gamma_{\alpha\beta} f^k_\gamma + (\Gamma^N)^k_{ij} (f) f^i_\alpha f^j_\beta \right) \frac{\p}{\p y^k}= 0,
\end{equation*}
where $x^\alpha, x^\beta$ are coordinates on $S^2$, $y^i, y^j$ are coordinates on $N$, and we are denoting $\frac{\p f}{\p x^\alpha}$ by $f_\alpha$. We know that the above is a well defined section of $f^{-1} TN$, independent of choice of coordinates on $M$ and $N$. We can also verify this by directly computing the transformation law in different coordinates.

We define the operator $P$ as
\begin{equation*}
	P(f) := g^{\alpha\beta} \left( f^k_{\alpha\beta} - (\Gamma^M)^\gamma_{\alpha\beta} f^k_\gamma + (\Gamma^N)^k_{ij} (f) f^i_\alpha f^j_\beta \right) \frac{\p}{\p y^k}.
\end{equation*}
It can be verified directly by coordinate change that the above is a $(0, 1)$ tensor.

Next, we consider the linearization of $P$. The idea is that, for a perturbation of $f$ and $x \in M$, the operator will give a vector that belongs to a different fiber in the tangent bundle of $N$. Thus we have to parallel translate the vectors to the same fiber before comparing them. Similar computation of linearization has been done in the proof of Proposition 3.1 on page 42 of McDuff and Salamon \cite[Section 3.1]{mcduff}.

Here are the details: Given $\xi\in W^{2,p} (\Sigma , f^{-1} TN)$, from page 85 of Adams and Fournier \cite[Theorem 4.12]{adams2003sobolev} we know that for $1 < p < 2$, $W^{2,p}$ is embedded into $C^{0,\gamma}$ where $\gamma = 2 - \frac{2}{p}$. Let
\begin{equation}
	\label{222}
	\Phi_f (\xi): f^{-1} TN\rightarrow (\exp_f(\xi))^{-1} TN
\end{equation}
be the parallel transport. Set
\begin{equation}
	\label{operatorF}
		\mathcal{F}_f(\xi) := \Phi_f(\xi)^{-1}  P (\exp_f (\xi))
\end{equation}
and compute
\begin{equation}
	D_f := d\mathcal{F}_f(0) \label{Df}.
\end{equation}

The following computation of linearization is due to T. Parker. Let $F: M \times (-\epsilon, \epsilon) \times (-\epsilon, \epsilon)$ be a two-parameter variation of a map $f: M\rightarrow N$ (not assumed to be harmonic). Write $F (x, s, t)$ as $f_{s, t} (x)$, so $f = f_{0, 0}$ and set
\begin{equation*}
	X = f_* \frac{\p}{\p s} \quad Y = f_* \frac{\p}{\p t} \quad Z_\alpha = f_* e_\alpha
\end{equation*}
where $\{e_1, e_2\}$ is a local orthonormal frame of $TM$. Then
\begin{equation}
	\label{intrinsic}
	D_f (Y) = \nabla^* \nabla Y + \sum_\alpha R^N (Z_\alpha, Y) Z_\alpha.
\end{equation}
The proof is by T. Parker and will be written out in Appendix \ref{apriori}.

It is well known that a second order elliptic operator from $W^{2, p}$ to $L^p$ spaces of sections of vector bundles on compact manifolds, such as $D_f$, is Fredholm (refer to H\"{o}rmander \cite{hormander1985analysis}). We write it as the following lemma.
\begin{lemma}
	\label{fredholm}
	For any $C^2$ map $f: \Sigma \rightarrow N$, where $\Sigma$ is a Riemann surface and $N$ a closed Riemannian manifold, the operator $D_f$ defined in (\ref{Df}) is a Fredholm operator.
\end{lemma}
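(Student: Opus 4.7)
The plan is to recognize $D_f$ as a second-order linear elliptic operator on sections of $f^{-1}TN$ over the closed surface $\Sigma$, and then to invoke the standard Fredholm theorem for elliptic operators on closed manifolds.

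First, I would read off the structure of $D_f$ from the intrinsic formula (\ref{intrinsic}): $D_f(Y) = \nabla^* \nabla Y + \sum_\alpha R^N(Z_\alpha, Y) Z_\alpha$. The principal part is the connection Laplacian $\nabla^*\nabla$ on sections of $f^{-1}TN$, built from the pulled-back Levi-Civita connection of $N$. Its principal symbol at a nonzero covector $\eta \in T^*_x\Sigma$ is $|\eta|_g^2 \cdot \mathrm{Id}_{(f^{-1}TN)_x}$, which is an isomorphism on every fiber, so $D_f$ is uniformly elliptic on the compact manifold $\Sigma$. The remaining term $Y \mapsto \sum_\alpha R^N(Z_\alpha, Y) Z_\alpha$ is a bundle endomorphism whose coefficients depend on $f$, $df$, and $R^N \circ f$; since $f \in C^2$ and $N$ is smooth, all coefficients of $D_f$ are at worst continuous and bounded on $\Sigma$.

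Next I would invoke the standard $L^p$ elliptic theory: any second-order linear elliptic operator with continuous leading coefficients on sections of a vector bundle over a closed Riemannian manifold extends to a bounded Fredholm map $W^{2,p} \to L^p$ for every $1 < p < \infty$. This is exactly the content of the H\"ormander reference \cite{hormander1985analysis} cited just before the lemma. The proof proceeds through local Calder\'on-Zygmund estimates on coordinate patches patched together by a partition of unity, which yield both the semi-Fredholm estimate (finite-dimensional kernel, closed range) and, by the analogous statement applied to the formal adjoint $D_f^*$ in the $L^{p/(p-1)}$ pairing, the finite-dimensionality of the cokernel.

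The only mildly delicate point is the limited regularity of the coefficients: because $f$ is only $C^2$, the leading coefficients of $D_f$ are only $C^0$. This is accommodated by the standard freezing-of-coefficients argument: on a sufficiently small coordinate ball one replaces the leading coefficient by its value at the center, treats the continuous difference as a small perturbation, and absorbs the resulting error using the a priori estimate for the constant-coefficient model operator. Compactness of $\Sigma$ ensures that finitely many such balls suffice to assemble a global elliptic estimate, and the Fredholm conclusion then follows from the general framework.
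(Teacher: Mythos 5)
Your approach matches the paper's, which dispatches the lemma by observing that $D_f$ is a second-order elliptic operator on sections of a vector bundle over a closed manifold and citing H\"ormander for the $W^{2,p}\to L^p$ Fredholm property; you merely supply the details that the paper leaves to the reference. However, your discussion of the ``delicate point'' misidentifies where the regularity of $f$ enters. The second-order (leading) coefficients of $D_f$ are the inverse metric coefficients $g^{kj}$ on $\Sigma$; they come from the principal part of $\nabla^*\nabla$, depend only on the Riemannian structure of $\Sigma$, and are smooth, independent of $f$. The map $f$ enters only through the pullback connection $f^*\nabla^N$, which contributes first-order coefficients of the schematic form $\Gamma^N(f)\,df$ (of class $C^1$ when $f\in C^2$) and zeroth-order coefficients involving $d^2 f$, $(df)^2$, and $R^N\circ f$ (of class $C^0$). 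Thus the principal symbol has smooth coefficients and no freezing-of-coefficients argument is required for the leading part; the lower-order terms are bounded maps $W^{1,p}\to L^p$ with merely continuous coefficients, hence by compactness of the embedding $W^{2,p}\hookrightarrow W^{1,p}$ on the closed surface $\Sigma$ they form a relatively compact perturbation of $\nabla^*\nabla$, and the Fredholm property of $D_f$ follows directly from that of the connection Laplacian. This inaccuracy does not invalidate your conclusion, but it does add an unnecessary complication to what is otherwise a clean argument.
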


\subsection{Structure of the Moduli Space}
\label{moduli}

This part follows from Donaldson and Kronheimer \cite[Section 4.2.4]{donaldson_kronheimer} or McDuff and Salamon \cite[Section A.4]{mcduff}. From Lemma \ref{fredholm} we know that
\begin{equation*}
	D_{f^0}: W^{2, p} \left(S^2, \left(f^0\right)^{-1} TN\right) \rightarrow L^p \left(S^2, \left(f^0\right)^{-1} TN\right)
\end{equation*}
is Fredholm. It follows that the kernel and image of $D_{f^0}$ are closed and admit topological complements. For simplicity, we write
\begin{align*}
	&U := W^{2, p} \left(S^2, \left(f^0\right)^{-1} TN\right),\\
	&V := L^p \left(S^2, \left(f^0\right)^{-1} TN\right).
\end{align*}
So we can write $U = U_0 \oplus F$, $V = V_0 \oplus G$, where $F$ and $G$ are finite-dimensional linear spaces, and $D_{f^0}$ is a linear isomorphism from $U_0$ to $V_0$.

Consider a connected open neighborhood of $0$ in $U$. Since the linearization of $\mathcal{F}_{f^0}$ is Fredholm, we know $\mathcal{F}_{f^0}$ is Fredholm. If $D_{f^0}$ is surjective, then by the implicit function theorem (refer to page 541 of McDuff and Salamon \cite[Theorem A.3.3]{mcduff}), we know there is a diffeomorphism $\psi$ from one neighborhood of $0$ in $U$ to another, such that $\mathcal{F} \circ \psi = D_{f^0}$.

Now consider the general case when $D_{f^0}$ is not necessarily surjective. Consider projection of $V$ onto $V_0$, we will have the derivative be surjective. The following is similar to Theorem A.4.3 on page 546 of McDuff and Salamon \cite[Theorem A.4.3]{mcduff}:
\begin{theorem}
	The Fredholm map $D_{f^0}$ from a neighborhood of $0$ is locally right equivalent to a map of the form
	\begin{equation*}
		\tilde{\mathcal{F}}: U_0 \times F \rightarrow V_0 \times G, \ \tilde{F} (\xi, \eta) = (D_{f^0}(\xi), \alpha(\xi, \eta))
	\end{equation*}
	where $D_{f^0}$ is a linear isomorphism from $U_0$ to $V_0$, $F$ and $G$ are finite-dimensional, and the derivative of $\alpha$ vanishes at $0$.
\end{theorem}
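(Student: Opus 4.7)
The theorem as stated refers to $D_{f^0}$ but, since $D_{f^0}$ is already linear, the content is clearly the nonlinear normal-form result for $\mathcal{F}_{f^0}$, whose linearization at the origin is $D_{f^0}$; this is also what the preceding paragraph (``$\mathcal{F}\circ\psi=D_{f^0}$'' in the surjective case) indicates. I will treat the statement in that sense. The plan is a standard Lyapunov--Schmidt reduction via the Banach-space implicit function theorem, using the Fredholm splittings $U=U_0\oplus F$ and $V=V_0\oplus G$ already fixed in the excerpt, where without loss of generality $F=\ker D_{f^0}$ and $V_0=\operatorname{Im}D_{f^0}$, so that $D_{f^0}|_{U_0}:U_0\to V_0$ is a topological linear isomorphism.

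First I would record the smoothness and vanishing facts about $\mathcal{F}_{f^0}$. Since $f^0$ is harmonic, $P(f^0)=0$, so $\mathcal{F}_{f^0}(0)=\Phi_{f^0}(0)^{-1}P(f^0)=0$, and by (\ref{Df}) we have $d\mathcal{F}_{f^0}(0)=D_{f^0}$. The map $\mathcal{F}_{f^0}:W^{2,p}_{f^0}\to L^p_{f^0}$ is $C^1$ (in fact $C^\infty$) near $0$ because the exponential map, parallel transport, and Christoffel symbols of $N$ are smooth, and by the Sobolev embedding $W^{2,p}\hookrightarrow C^{0,\gamma}$ on a 2-manifold (cited in the excerpt from Adams--Fournier) the composition operators involved are well-defined and smooth in the Banach-space sense. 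This is the one technical point that deserves attention but is standard.

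Next I would apply the IFT to linearise the $V_0$-component of $\mathcal{F}_{f^0}$. Let $\pi_{V_0},\pi_G$ denote the projections onto $V_0,G$, and $\pi_F:U\to F$ the projection associated to $U=U_0\oplus F$. Define
\begin{equation*}
  \Phi:U\supset\mathcal{O}\to V_0\times F,\qquad \Phi(\zeta)=\bigl(\pi_{V_0}\mathcal{F}_{f^0}(\zeta),\,\pi_F\zeta\bigr).
\end{equation*}
Its differential at $0$ sends $\xi+\eta\in U_0\oplus F$ to $(\pi_{V_0}D_{f^0}(\xi+\eta),\eta)=(D_{f^0}|_{U_0}\xi,\eta)$, which is a topological isomorphism onto $V_0\times F$. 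The Banach-space IFT (Theorem A.3.3 of McDuff--Salamon, cited in the excerpt) therefore produces a local inverse $\Psi:V_0\times F\to U$ with $\Psi(0,0)=0$. I then define the source diffeomorphism
\begin{equation*}
  \phi:U_0\times F\to U,\qquad \phi(\xi,\eta):=\Psi\bigl(D_{f^0}\xi,\eta\bigr),
\end{equation*}
which is a local diffeomorphism near $0$ because $D_{f^0}|_{U_0}$ is a linear isomorphism.

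Finally I would read off the normal form. By construction $\pi_{V_0}\mathcal{F}_{f^0}(\phi(\xi,\eta))=D_{f^0}\xi$, so setting $\alpha(\xi,\eta):=\pi_G\mathcal{F}_{f^0}(\phi(\xi,\eta))$ gives, in the decomposition $V=V_0\oplus G$,
\begin{equation*}
  \mathcal{F}_{f^0}\circ\phi\,(\xi,\eta)=\bigl(D_{f^0}\xi,\,\alpha(\xi,\eta)\bigr)=\tilde{\mathcal{F}}(\xi,\eta),
\end{equation*}
which is the desired right equivalence. The vanishing $d\alpha(0,0)=0$ follows from the chain rule: $d\alpha(0,0)=\pi_G\circ d\mathcal{F}_{f^0}(0)\circ d\phi(0,0)=\pi_G\circ D_{f^0}\circ d\phi(0,0)$, and $\pi_G\circ D_{f^0}=0$ because $\operatorname{Im}D_{f^0}=V_0\subset\ker\pi_G$. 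There is no serious obstacle here; the only thing to be careful about is the $C^1$-regularity of $\mathcal{F}_{f^0}$ as a nonlinear map between the Sobolev spaces, which is what legitimises the use of the IFT in infinite dimensions.
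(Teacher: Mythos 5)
Your proof is correct and uses the same Lyapunov--Schmidt reduction that underlies McDuff--Salamon Theorem A.4.3, which the paper cites (together with the hint of projecting onto $V_0$) without writing out the details. Your reading of the statement as concerning $\mathcal{F}_{f^0}$ rather than the already-linear $D_{f^0}$ is the right correction of the typo in the theorem's wording.
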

Note that elements in $Z(\mathcal{F})$ are smooth by elliptic regularity (use partition of unity to reduce the case on manifolds to that on Euclidean space).

We know there exists a $C^\infty$ diffeomorphism $g$ from some open set $W$ containing $0 \in W^{2, p} (S^2 , (f^0)^{-1} TN)$, such that
\begin{equation*}
	g(0) = 0,\quad dg(0) = id
\end{equation*}
and
\begin{equation*}
	\mathcal{F}^{-1} (0) \cap g(W) = g(W \cap \ker D)
\end{equation*}

From the above we can get a coordinate chart for $g(W)$ by the isomorphism between $\ker D$ and $\mathbb{R}^m$, where $m$ is the dimension of the kernel. Thus we can view the vectors in the kernel as tangent vectors of the moduli space. Furthermore, a smooth path $f_t$ in the open neighborhood can be represented as $g(\gamma_1 (t) \xi_1 + \cdots + \gamma_m (t) \xi_m)$.

\subsection{The Surjective Operator}
\label{surj_operator}
In McDuff and Salamon \cite[Chapter 10]{mcduff}, the operators are surjective, and one can directly construct a right inverse. There is also work in the context of J-holomorphic curves where the linearization of the Cauchy-Riemann operator is not necessarily surjective. For example, Abouzaid \cite[Lemma 5.2]{abouzaid} considered the right inverse of a restriction of the operator. Hutchings and Taubes \cite[Chapter 2]{taubes_obstruction} considered the obstruction bundle that emerged from the positive dimensional cokernel. McDuff and Wehrheim \cite[Section 4]{mcduff2015kuranishiatlasestrivialisotropy} also considered the obstruction space. Ruan and Tian \cite[Section 6]{ruan_tian} estimated spectrum of certain linear elliptic operators, where the lowest eigenvalue is not bounded away from zero. Nonsurjective cases are also considered in other context. Fukaya, Ono, Oh, and Ohta \cite{fukaya_oh_ohta_ono} considered the obstruction bundle in gluing for Floer theory. In the context of gluing ASD connections, Donaldson and Kronheimer \cite[Proposition 7.2.28, p. 291]{donaldson_kronheimer} enlarged the domain of the operator so that it became surjective. We will use the same idea as in Donaldson and Kronheimer.

Given $(\tilde{f}_1, \tilde{f}_2) \in \mathcal{M} (c, p)$, we will consider a neighborhood $\mathcal{U}$ of this element, which we can make smaller whenever necessary. Now consider the cokernels of $D_{\tilde{f}_1}$ and $D_{\tilde{f}_2}$ as in Definition \ref{moduli_defn_nonsurj}. We will use a method similar to Donaldson and Kronheimer \cite[Proposition 7.2.28, p. 291]{donaldson_kronheimer}.

Recall the representatives of the cokernel defined in Definition \ref{V}.
\begin{definition}
	\label{sigma}
	Let
	\begin{equation*}
		\sigma: \tilde{V} \rightarrow L^p (\Sigma_1, \tilde{f}_1^{-1} TN) \times L^p (\Sigma_2, \tilde{f}_2^{-1} TN),
	\end{equation*}
	be the identity maps. For sufficiently small $\mathcal{U}$ and $(f_1, f_2) \in \mathcal{U}$, let
	\begin{equation*}
		\sigma: \tilde{V} \rightarrow L^p (\Sigma_1, {f}_1^{-1} TN) \times L^p (\Sigma_2, {f}_2^{-1} TN),
	\end{equation*}
	map elements by parallel transport.
\end{definition}
For small enough $\mathcal{U}$, we know that $D_{f_1, f_2} \oplus \sigma$ will be surjective for all $(f_1, f_2) \in \mathcal{U}$.

\begin{definition}
	\label{norm_and_inner_product}
	Set the norm and inner product on $\tilde{V}$ by setting $\{\tilde{v}_{1}, \cdots, \tilde{v}_{k}\}$ as an orthonormal set. Define the norm of $W^{2, p}_{f_1, f_2} \times \tilde{V}$ by
	\begin{equation*}
		\| (\xi, \tilde{v})\|_{W^{2, p}_{f_1, f_2} \times \tilde{V}} := \sqrt{\|\xi\|_{W^{2, p}}^2 + \|\tilde{v}\|_{\tilde{V}}^2}.
	\end{equation*}
	Define the inner product of $W^{2, p}_{f_1, f_2} \times \tilde{V}$ by the $L^2$ inner product in $W^{2, p}_{f_1, f_2}$ and setting $\tilde{V}$ to be orthogonal to $W^{2, p}_{f_1, f_2}$.
\end{definition}
In fact, for finite-dimensional spaces, all norms are equivalent, so this is not essential.

In order to simplify the notations for the $2, p, R$ space direct product with the finite-dimensional spaces $\tilde{V}$, we define the following notation:
\begin{definition}
	\label{weighted_with_V}
	Let
	\begin{equation*}
		\|(\xi, \tilde{v})\|_{2, p, R, V} := \sqrt{\|\xi\|_{2, p, R}^2 + \|\tilde{v}\|_{\tilde{V}}^2}.
	\end{equation*}
\end{definition}

\section{Existence of the Extended Gluing Map}
\label{existence_of_the_gluing_map}

\subsection{The implicit function theorem}
\label{implicit_function_theorem}

We first state a version of implicit function theorem for general Banach spaces:
\begin{proposition}[Implicit Function Theorem for General Banach Spaces](See McDuff and Salamon \cite[Proposition A.3.4, p. 542]{mcduff})
	\label{implicit_general}
	Let $X$ and $Y$ be Banach spaces, $U \subset X$ be an open set, and $f: U \rightarrow Y$ be a continuously differentiable map. Let $x_0 \in U$ be such that $D := df(x_0) : X \rightarrow Y$ is surjective and has a (bounded linear) right inverse $Q: Y\rightarrow X$. Choose positive constants $\delta$ and $c$ such that $\|Q\| \leq c$, $B_\delta (x_0 ; X) \subset U$, and
	\begin{equation}
		\|x - x_0 \| < \delta\ \Rightarrow \| df(x) - D\| \leq \frac{1}{2c}.
	\end{equation}
	Suppose that $x_1 \in X$ satisfies
	\begin{equation}
		\|f(x_1)\| < \frac{\delta}{4c},\ \|x_1 - x_0\| < \frac{\delta}{8}.
	\end{equation}
	Then there exists a unique $x\in X$ such that
	\begin{equation}
		f(x) = 0,\ x - x_1 \in \text{im} Q,\ \|x - x_0\| < \delta.
	\end{equation}
	Moreover, $\|x - x_1\| \leq 2c \|f(x_1)\|$.
\end{proposition}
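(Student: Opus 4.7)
My plan is to set up a Newton-type iteration and prove convergence by a contraction-mapping argument. Define $T(x) := x - Qf(x)$ and iterate $x_{n+1} := T(x_n)$ starting from the given $x_1$. Two preliminary observations motivate this setup: first, any fixed point satisfies $Qf(x) = 0$, and since $DQ = I_Y$ forces $Q$ to be injective, a fixed point is automatically a zero of $f$; second, each increment $x_{n+1} - x_n = -Qf(x_n)$ lies in $\mathrm{im}\,Q$, so the limit $x$ will satisfy $x - x_1 \in \mathrm{im}\,Q$, matching the conclusion.

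The main analytic ingredient is a geometric decay $\|f(x_{n+1})\| \leq \tfrac{1}{2}\|f(x_n)\|$. I would derive this from the fundamental theorem of calculus applied to $t \mapsto f(x_n - tQf(x_n))$, combined with the identity $DQf(x_n) = f(x_n)$, which produces
\begin{equation*}
	f(x_{n+1}) = -\int_0^1 \bigl(df(x_n - tQf(x_n)) - D\bigr)\,Qf(x_n)\,dt.
\end{equation*}
Whenever the segment from $x_n$ to $x_{n+1}$ stays inside $B_\delta(x_0)$, the hypothesis $\|df(\cdot) - D\| \leq 1/(2c)$ together with $\|Q\| \leq c$ yields the claimed halving of $\|f(x_n)\|$, and hence $\|x_{n+1} - x_n\| \leq c \cdot 2^{1-n}\|f(x_1)\|$. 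Summing gives a Cauchy sequence with $\|x - x_1\| \leq 2c\|f(x_1)\|$. Applying the continuous operator $Q$ to the absolutely convergent series $\sum_n f(x_n)$ in $Y$ shows $x - x_1 \in \mathrm{im}\,Q$, which bypasses any question of whether $\mathrm{im}\,Q$ is closed.

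Next I would verify that every iterate remains in $B_\delta(x_0)$, so the linearization bound actually applies throughout: by the triangle inequality and the hypotheses $\|x_1 - x_0\| < \delta/8$ and $\|f(x_1)\| < \delta/(4c)$, one gets $\|x_n - x_0\| \leq \delta/8 + 2c\|f(x_1)\| < 5\delta/8 < \delta$. Passing to the limit then gives $f(x) = 0$ with $\|x - x_0\| < \delta$ and $\|x - x_1\| \leq 2c\|f(x_1)\|$. For uniqueness, given a second solution $x'$ in the same class, I would write $x - x' = Qw$, expand $0 = f(x) - f(x') = D(x - x') + \int_0^1 \bigl(df(\cdot) - D\bigr)(x - x')\,dt$, apply $Q$, and use that $QD$ restricted to $\mathrm{im}\,Q$ is the identity, producing $\|x - x'\| \leq \tfrac{1}{2}\|x - x'\|$ and forcing $x = x'$.

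The main obstacle is bookkeeping: the tight constants $1/(2c)$, $1/(4c)$, and $1/8$ in the hypotheses must be threaded carefully through the iteration to keep every $x_n$ inside $B_\delta(x_0)$, since as soon as an iterate escapes this ball the estimate $\|df - D\| \leq 1/(2c)$ is no longer available and the geometric decay collapses. The only other conceptual subtlety is the one flagged above: because $\mathrm{im}\,Q$ need not be closed, the assertion $x - x_1 \in \mathrm{im}\,Q$ has to be obtained by exhibiting $x - x_1$ as $-Q$ applied to an absolutely convergent sum in $Y$, rather than by invoking closedness of the image.
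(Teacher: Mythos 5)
Your Newton-iteration argument is correct, and since the paper states this proposition by citation (McDuff and Salamon, Proposition~A.3.4) without supplying its own proof, the relevant comparison is with the cited source, which proves it by precisely this contraction/Newton scheme. One small remark: the care you take to avoid relying on closedness of $\mathrm{im}\,Q$ is unnecessary here, since $DQ = I_Y$ exhibits the bounded operator $D$ as a left inverse of $Q$, which forces $\mathrm{im}\,Q$ to be closed; your direct argument via $x - x_1 = -Q\bigl(\sum_n f(x_n)\bigr)$ is of course also fine, and you correctly flag that closing the induction to keep every iterate and intermediate segment inside $B_\delta(x_0)$ is where the constants $\delta/8$ and $\delta/(4c)$ are actually spent.
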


\subsection{Pregluing}
\label{pregluing}

From now on we consider gluing two harmonic maps from $S^2$ to some closed Riemannian manifold $(N, h)$. However, the same procedure can be applied in the general case. The pregluing is similar to the procedure in McDuff and Salamon \cite[Section 10.5, p. 382]{mcduff}.

More specifically, consider $\mathcal{M} (c, p)$ as in definition \ref{moduli_defn_nonsurj}. Let $f_1, f_2, x_1, x_2$ be\\
$f^0, f^\infty, 0, \infty$ respectively (here we are using the projective plane $\mathbb{CP}^1$ as the coordinate chart for $S^2$). Furthermore, suppose $\epsilon$ is less than the injectivity radius of $N$. Consider the setup in Section \ref{gluing_set_up}.

Let $r := \delta R$. Note that this definition is only for the brevity of notations. In the process of estimates, $r$ will denote the radius for polar coordinates. We will need the following $W^{2,p}$-small perturbations $f^{0,r}, f^{\infty,r}$ of $f^0, f^\infty$:
\begin{equation}
	\label{f0r}
	f^{0,r} (z) := \left\{
	\begin{aligned}
		&f^R (z),\quad &|z| \geq \frac{1}{r},\\
		&f^0 (0),\quad &|z| \leq \frac{1}{r}.
	\end{aligned}
	\right.
	\quad
	f^{\infty,r} (z) := \left\{
	\begin{aligned}
		&f^R \left(\frac{z}{R^2}\right),\quad &|z| \leq r,\\
		&f^\infty (\infty),\quad &|z| \geq r.
	\end{aligned}
	\right.
\end{equation}
Note that $f^{0,r} (z) = f^R (z)$ for all $|z| \geq \delta/R$ and $f^{\infty, r} (z) = f^R (z / R^2)$ for all $|z| \leq R / \delta$.

Let $\mathcal{W}_{f^{0,\infty}} \subset W^{2,p}_{f^{0,\infty}}$ be the $L^2$ orthogonal complement of the kernel of $D_{0,\infty}$, and define
\begin{equation}
	\label{Q0inf}
	Q_{0,\infty} := Q_{f^0,f^\infty} := (D_{0,\infty} \mid_{\mathcal{W}_{f^{0,\infty}}}\oplus \sigma)^{-1}.
\end{equation}
Similarly, we have
\begin{equation}
	\label{Q0infr}
	Q_{0,\infty,r} := Q_{f^{0,r},f^{\infty,r}}.
\end{equation}
Since the operators $D_{0,\infty,r}$ are small perturbations of $D_{0,\infty}$ (see Proposition \ref{D_perturbation}), we know that these right inverses still exist.

We will need the following estimate: There are positive constants $\delta_0, c_0$ only depending on  such that
\begin{equation}
	\label{322}
	\|Q_{0,\infty,r} \eta \|_{W^{2,p}} \leq c_0 \|\eta\|_{L^p},
\end{equation}
for all $(f^0 ,f^\infty) \in \mathcal{M} (c, p)$, and $(\delta, R) \in \mathcal{A} (\delta_0)$, and $(\eta^0, \eta^\infty) \in L^p_{f^{0,r}} \times L^p_{f^{\infty,r}}$. This will be shown in Lemma \ref{Q_est}.

\subsection{Approximate right inverse}
\label{approximate_right_inverse}

The idea of our proof is that, first find an approximate right inverse, then we use this to find the real right inverse and apply the implicit function theorem. In this section, we will define the approximate right inverse and prove certain estimates that will be useful later.

Recall that for any $0 < \delta_0 < 1$, we define $\mathcal{A} (\delta_0)$ to be the set of all pairs of $(\delta, R)$ such that $0 < \delta < \delta_0$ and $\delta R > 1 / \delta_0$. Also recall the definition of $\mathcal{M} (c, p)$ in Definition \ref{moduli_defn_nonsurj}. In the definition below, note that for any pair of positive numbers $\delta_1, \delta_2$ such that $\delta_1 < \delta_2$, we have $\mathcal{A} (\delta_1) \subset \mathcal{A} (\delta_2)$. Thus we can always shrink $\delta_0$ if necessary.

The following construction of the approximate inverse is an analog of the construction on page 382 in McDuff and Salamon \cite[Proposition 10.5.1]{mcduff}.

\begin{definition}
	\label{approx_right_inverse_defn}
	For any $(\tilde{f}^0, \tilde{f}^\infty) \in \mathcal{M} (c, p)$, we can choose a neighborhood $\mathcal{U}$ in $\mathcal{M} (c, p)$ and choose $\delta_0$ to be the same as in \ref{Q_est}, such that, for any $(f^0, f^\infty) \in \mathcal{U}$ and any $(\delta, R) \in \mathcal{A} (\delta_0)$, we define
	\begin{equation*}
		T_{f^R} : L^p (S^2 , (f^R)^{-1} TN) \rightarrow W^{2,p}(S^2 , (f^R)^{-1} TN) \times \tilde{V}
	\end{equation*}
	along the preglued map $f^R : S^2 \rightarrow N$ defined by (\ref{fR}) as follows:
	
	Given $\eta \in L_{f^R}^p$ we first define the pair
	\begin{equation*}
		(\eta^0, \eta^\infty) \in L^p_{f^{0,r}} \times L^p_{f^{\infty,r}}
	\end{equation*}
	by cutting off $\eta$ along the circle $|z| = 1/R$:
	\begin{equation}
		\label{eta_decomposition}
		\eta^0(z) :=\left\{
		\begin{aligned}
			&\eta(z),& \text{ if } |z| \geq 1/R,\\
			&0,& \text{ if } |z| \leq 1/R,
		\end{aligned}
		\right.
		\ 
		\eta^\infty(z) :=\left\{
		\begin{aligned}
			&\eta(z/R^2),& \text{ if } |z| \leq R,\\
			&0,& \text{ if } |z| \geq R.
		\end{aligned}
		\right.
	\end{equation}
	Second, define
	\begin{equation}
		\label{xi}
		(\xi^0 , \xi^\infty, \tilde{v}) := Q_{0,\infty,r}(\eta^0, \eta^\infty)
	\end{equation}
	and note that the vector fields $\xi^0, \xi^\infty$ have the same value $\xi_0$ at the points where two maps meet:
	\begin{equation*}
		\xi^0(0) = \xi^\infty(\infty) =: \xi_0 \in T_{f^0(0)}N.
	\end{equation*}
	Third, let $1 - \beta_{\delta,R} : \mathbb{C} \rightarrow \mathbb{R}$ denote a cutoff function defined as follows: $\beta_{\delta,R}(z) = 0$ for $|z| \leq \delta/R$, and $\beta_{\delta,R} (z) = 1$ for $|z| \geq 1/R$, and
	\begin{equation*}
		\beta_{\delta,R}(z) := \kappa\left(\frac{\log (R|z|/\delta)}{\log (1/\delta)}\right), \quad \frac{\delta}{R} \leq |z| \leq \frac{1}{R}
	\end{equation*}
	where $\kappa$: $\mathbb{R} \rightarrow [0, 1]$ is a $C^\infty$ cut-off function such that $\kappa(t) = 1$ if $t\geq 1$ and $\kappa(t) = 0$ if $t\leq 0$. Fourth, define $T_{f^R} \eta := (\xi^R, \tilde{v})$ by defining
	\begin{equation}
		\label{xiR}
		\xi^R (z) := \left\{
		\begin{aligned}
			&\xi^0 (z),\ & \text{ if } |z| \geq \frac{1}{\delta R},\\
			&\xi^0 (z) + \beta_{\delta,R}\left(\frac{1}{R^2 z}\right) (\xi^\infty (R^2 z) - \xi_0),\ & \text{ if } \frac{1}{R} \leq |z| \leq \frac{1}{\delta R},\\
			&\xi^0 (z) + \xi^\infty (R^2 z) - \xi_0,\ & \text{ if } |z| = \frac{1}{R},\\
			&\xi^\infty (R^2 z) + \beta_{\delta,R}(z) (\xi^0 (z) - \xi_0),\ &\text{ if } \frac{\delta}{R} \leq |z| \leq \frac{1}{R},\\
			& \xi^\infty (R^2 z),\ &\text{ if } |z|\leq \frac{\delta}{R}.
		\end{aligned}
		\right.
	\end{equation}
\end{definition}

The following Lemma \ref{approx_right_inverse_est11} and Lemma \ref{approx_right_inverse_est12} correspond to estimates in the proof of Proposition 10.5.1 on page 382 of McDuff and Salamon \cite[Proposition 10.5.1]{mcduff}.
\begin{lemma}
	\label{approx_right_inverse_est11}
	For any $(\tilde{f}^0, \tilde{f}^\infty) \in \mathcal{M} (c, p)$, we can choose a neighborhood $\mathcal{U}$ of $\mathcal{M} (c, p)$, and $\delta_0$ small enough only depending on $c, p, N, \mathcal{U}$, such that for any $(f^0, f^\infty) \in \mathcal{U}$ and any $(\delta, R) \in \mathcal{A} (\delta_0)$, the approximate right inverse $T_{f^R}$ defined in Definition \ref{approx_right_inverse_defn} satisfies:
	\begin{equation*}
		\|(D_{f^R} \oplus \sigma) T_{f^R} \eta - \eta\|_{0,p,R, \Omega_1} \leq \frac{1}{4}\|\eta\|_{0,p,R, \Omega_1}
	\end{equation*}
	for every $\eta \in L^p (S^2, (f^R)^{-1}TN)$, where $\Omega_1 = \{\delta/ R \leq |z| \leq 1 / R\}$.
\end{lemma}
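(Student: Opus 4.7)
The plan is to exploit that on $\Omega_1 = \{\delta/R \leq |z| \leq 1/R\}$ the three maps $f^R$, $f^{0,r}$, and $f^{\infty,r}(R^2\,\cdot\,)$ are identically equal to the common value $y = f^0(0) = f^\infty(\infty)$: on $\Omega_1$ both arguments of $\rho$ in (\ref{fR}) have modulus $\leq 1$ so $f^R \equiv y$; $|z| \leq 1/R < 1/r$ gives $f^{0,r}\equiv y$ by (\ref{f0r}); and $|R^2 z| \geq \delta R = r$ gives $f^{\infty,r}(R^2 z) \equiv y$. Consequently the pulled-back tangent bundles are canonically trivialized on $\Omega_1$, and each of $D_{f^R}$, $D_{f^{0,r}}$, $D_{f^{\infty,r}}$ reduces to $\nabla^*\nabla$ on $T_y N$-valued functions (since $Z_\alpha = df(e_\alpha) = 0$ makes the curvature term in (\ref{intrinsic}) vanish).

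First I would derive a pointwise identity for the error. The defining equation $Q_{0,\infty,r}(\eta^0,\eta^\infty) = (\xi^0,\xi^\infty,\tilde v)$ reads $D_{f^{0,r}}\xi^0 + \sigma_1(\tilde v) = \eta^0$ and $D_{f^{\infty,r}}\xi^\infty + \sigma_2(\tilde v) = \eta^\infty$. On $\Omega_1$, $\eta^0 \equiv 0$ by (\ref{eta_decomposition}); after shrinking $\delta_0$ so the supports of $\tilde v_1,\dots,\tilde v_k$ (disjoint from $x_1, x_2$ by Definition \ref{V}) lie outside the neck, $\sigma(\tilde v)\equiv 0$ on $\Omega_1$ as well, so $\nabla^*\nabla \xi^0 = 0$ there. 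Translating the second equation through the isometry $z \mapsto 1/(R^2 z)$ noted below (\ref{weighted_metric}), together with $\eta^\infty(R^2 z) = \eta(z)$, yields $\nabla^*\nabla[\xi^\infty(R^2 z)] = \eta(z)$ on $\Omega_1$ in the $g^R$ metric. Substituting the third branch of (\ref{xiR}), a Leibniz expansion combined with $\nabla^*\nabla \xi^0 = 0$ then reduces the error on $\Omega_1$ to a fixed linear combination of
\begin{equation*}
(\Delta_{g^R}\beta_{\delta,R})(\xi^0 - \xi_0) \quad\text{and}\quad \bigl\langle \nabla\beta_{\delta,R},\,\nabla\xi^0\bigr\rangle_{g^R}.
\end{equation*}

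Second I would estimate these two terms in the $(0,p,R)$-norm on $\Omega_1$. Since $\beta_{\delta,R}(z) = \kappa(\log(R|z|/\delta)/\log(1/\delta))$ depends only on $|z|$, direct Euclidean computation yields $|\nabla\beta_{\delta,R}| \lesssim (|z|\log(1/\delta))^{-1}$ and $|\Delta\beta_{\delta,R}|\lesssim (|z|^2\log(1/\delta))^{-1}$, supported in $\Omega_1$, with corresponding $g^R$-bounds after inserting the right powers of $\theta^R$. The Morrey embedding $W^{2,p} \hookrightarrow C^{0,\gamma}$ with $\gamma = 2 - 2/p$, cited just before (\ref{222}), gives $|\xi^0(z) - \xi_0| \lesssim \|\xi^0\|_{W^{2,p}} |z|^\gamma$, and (\ref{322}) gives $\|\xi^0\|_{W^{2,p}} \leq c_0 \|\eta\|_{L^p}$. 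Integrating in polar coordinates over $\{\delta/R \leq |z| \leq 1/R\}$, the Morrey gain $|z|^{\gamma p}$ absorbs the near-origin growth of $|\Delta\beta|^p$, and a similar computation for the gradient term produces an overall bound of the shape $C(c,p,N,\mathcal U)\,(\log(1/\delta))^{-1}\,\|\eta\|_{0,p,R}$. Choosing $\delta_0$ small enough to make this prefactor at most $1/4$ finishes the argument.

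The main obstacle is the weight bookkeeping. The $(0,p,R)$-measure carries $\theta^R(z)^{-2}$, and converting $|\Delta\beta|^p$, $|\nabla\beta|^p$ from Euclidean to $g^R$ brings additional powers of $\theta^R$; the resulting integrand must remain integrable on $\Omega_1$ and must yield a genuine negative power of $\log(1/\delta)$, rather than merely a bounded constant or a blowing-up quantity. This is where the Morrey exponent $\gamma = 2 - 2/p$, and thereby the constraint $1 < p < 2$ in Definition \ref{moduli_defn_nonsurj}, enters critically: the factor $|z|^{\gamma p}$ is exactly what is needed to tame the $|z|^{-2p}$ singularity of $|\Delta\beta|^p$ (after the $\theta^R$ substitutions) and to produce a finite bound with the desired $(\log(1/\delta))^{-1}$ decay.
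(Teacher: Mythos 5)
Your proposal is correct and follows the same overall strategy as the paper's proof: trivialize the bundle on $\Omega_1$ using $f^R = f^{0,r} = f^{\infty,r}(R^2\cdot) \equiv y$, reduce $D_{f^R}$ and $D_{f^{0,r}}$ to $\nabla^*\nabla$ because $Z_\alpha = 0$, expand $D_{f^R}\xi^R - \eta = D_{f^{0,r}}(\beta_{\delta,R}(\xi^0 - \xi_0))$ via Leibniz, and bound the cutoff-derivative terms using $|\nabla\beta|\lesssim(|z|\log(1/\delta))^{-1}$, $|\Delta\beta|\lesssim(|z|^2\log(1/\delta))^{-1}$, the Morrey embedding, H\"older, and (\ref{322}).

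There is one place where your argument is cleaner than the paper's writeup, and it is worth noting. You explicitly invoke $\nabla^*\nabla\xi^0 = 0$ on $\Omega_1$ (which follows from $D_{f^{0,r}}\xi^0 = \eta^0 = 0$ there, the vanishing curvature term, and the support condition on $\sigma(\tilde v)$ after shrinking $\delta_0$) to annihilate the $\beta\,\nabla^*\nabla\xi^0$ contribution in the Leibniz expansion outright. The error then consists only of $(\Delta\beta)(\xi^0-\xi_0)$ and $\langle\nabla\beta,\nabla\xi^0\rangle$, both of which carry a gain of a negative power of $\log(1/\delta)$. The paper, by contrast, expands one coordinate direction at a time, writing $\nabla_s\nabla_s(\beta(\xi^0-\xi_0)) = (\partial_s^2\beta)(\xi^0-\xi_0) + 2(\partial_s\beta)(\partial_s\xi^0) + \beta\,\partial_s^2\xi^0$, and estimates each piece separately; the last piece $\beta\,\partial_s^2\xi^0$ is bounded only by a constant multiple of $\|\eta\|_{0,p,R}$ with no logarithmic decay, and the cancellation $\beta(\partial_s^2\xi^0 + \partial_t^2\xi^0) = 0$, which is what actually removes this term, is left implicit. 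Your version makes this cancellation explicit and so avoids that gap. Two small slips in your write-up that do not affect the argument: the branch of (\ref{xiR}) relevant on $\Omega_1$ is $\xi^\infty(R^2 z) + \beta_{\delta,R}(z)(\xi^0(z)-\xi_0)$, not the one you call the ``third''; and the identification of $\xi^\infty$ with $\xi^\infty(R^2\cdot)$ is via the dilation $w = R^2 z$ (which carries $g^R$ on $\{|z|\leq 1/R\}$ to the Fubini--Study metric on $\{|w|\leq R\}$), not the inversion $z \mapsto 1/(R^2 z)$ — the inversion is the metric isometry swapping the two halves, a separate observation. Also, the $\langle\nabla\beta,\nabla\xi^0\rangle$ term produces a $(\log(1/\delta))^{-1/2}$ gain via H\"older rather than $(\log(1/\delta))^{-1}$, but both tend to zero, so the conclusion stands.
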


\begin{proof}
	Recall that the elements in $\tilde{V}$ are supported away from the neck. The proof is elementary. We compute the expressions in coordinates and apply Sobolev embedding and H\"{o}lder's inequality.
	
	In this region,
	\begin{equation*}
		f^{0,r}(z) = f^{\infty,r}(R^2 z) = f^R (z) = y.
	\end{equation*}
	Therefore over this annulus the vector field $\xi^R$ (defined in (\ref{xiR})) takes values in the fixed vector space $T_y N$. Furthermore, the definition of $\xi^\infty$ (defined in (\ref{xi})) implies that $D_{f^R} \xi^\infty (R^2 \cdot) = \eta$ in the region $|z| \leq 1/R$.
	
	Recall the formula for the linearization in Equation (\ref{intrinsic}). We can write the terms in coordinates (refer to Nicolaescu \cite[Example 10.1.32, p. 458]{liviu2007geometryofmanifolds}) as follows.
	\begin{equation}
		\nabla^* \nabla = -\sum_{k, j} \left( g^{kj} \nabla_k \nabla_j + \frac{1}{\sqrt{|g|}} \p_{x^k} (\sqrt{|g|} g^{kj}) \cdot \nabla_j \right)
	\end{equation}
	
	Let's consider the coordinates
	\begin{equation}
		\label{polar}
		z = s + it = r\cos\theta + ir\sin\theta.
	\end{equation}
	We recall that, by our construction of the cutoff function, we have
	\begin{align}
		&\left| \frac{\p}{\p r} \beta (z) \right| \leq \frac{C}{ |z| \log(1 / \delta) },\label{337}\\
		&\left| \frac{\p^2}{\p r^2} \beta (z)\right| \leq \frac{C}{|z|^2 \log(1/\delta)},\label{338}
	\end{align}
	where the constants are universal.
	
	Hence, when $\delta/R \leq |z| \leq 1/R$, note that $D_{f^{0, r}} \xi^0 = \eta^0 = 0$ (see (\ref{eta_decomposition}) and (\ref{xi}) for definitions), we find
	\begin{align}
		D_{f^R} \xi^R - \eta &= D_{f^{0,r}} (\beta_{\delta,R} (\xi^0 - \xi_0))\\
		&= \nabla^* \nabla (\beta_{\delta, R} (\xi^0 - \xi_0)) + \sum_\alpha R^N (Z_\alpha, \beta_{\delta,R} (\xi^0 - \xi_0)) Z_\alpha \label{eq1}\\
		&= - \left(R^{-2} + R^2 |z|^2\right)^2 \left(\nabla_{\frac{\p}{\p s}} \nabla_{\frac{\p}{\p s}} + \nabla_{\frac{\p}{\p t}}\nabla_{\frac{\p}{\p t}}\right) (\beta_{\delta, R} (\xi^0 - \xi_0)) + \sum_\alpha R^N (Z_\alpha, \beta_{\delta,R} (\xi^0 - \xi_0)) Z_\alpha \label{eq2}
	\end{align}
	where $Z_\alpha$ is defined as in Equation (\ref{Zalpha}).
	
	Since $f^R$ is constant for $\delta / R \leq |z| \leq 1 / R$, $Z_\alpha$ in Equation (\ref{eq2}) vanishes for all $\alpha$. We only need to consider the first term in Equation (\ref{eq2}). Without loss of generality, we can only consider estimating
	\begin{align*}
		&- \left(R^{-2} + R^2 |z|^2\right)^2 \nabla_{\frac{\p}{\p s}} \nabla_{\frac{\p}{\p s}}  (\beta_{\delta, R} (\xi^0 - \xi_0))\\
		=& -\left(R^{-2} + R^2 |z|^2\right)^2 \left( \frac{\p \beta(z)}{\p s^2} (\xi^0 - \xi_0) + 2 \frac{\p \beta(z)}{\p s} \frac{\p \xi^0}{\p s} + \beta(z) \frac{\p^2 \xi^0}{\p s^2} \right)
	\end{align*}
	where the equality holds because $f^R$ is constant for $\delta / R \leq |z| \leq 1 / R$.
	
	From Equation (\ref{338}) we know
	\begin{equation*}
		\left|\frac{\p^2 \beta(z)}{\p s^2} \right| \leq \frac{C}{|z|^2 \log(1/\delta)}.
	\end{equation*}
	Furthermore, note that we have control over the $C^{0,\gamma}$ norm of $\xi^0$ by the $W^{2,p}$ norm. We consider the Sobolev embedding of $C^{0,\mu}$ into $W^{2,p}$ where $1 < p < 2$ and $\mu = 2 - \frac{2}{p}$. For every ball $B \subset \mathbb{R}^2$ and every $\xi \in W^{2,p}(B)$, we have
	\begin{align}
		\label{embedding}
		z_0, z_1 \in B \ \Rightarrow\ |\xi(z_1) - \xi(z_0)| \leq C(p) \|\xi\|_{W^{2,p}(B)}|z_1 - z_0|^{2 - \frac{2}{p}}.
	\end{align}
	Note that in the Sobolev embedding, for balls of different radius, the constant remains the same. Thus, under the weighted norm (\ref{0pR}),
	\begin{align*}        
		& \left\|(R^{-2} + R^2 r^2)^2 \frac{\p^2 \beta_{\delta,R}}{\p s^2} (\xi^0 - \xi_0) \right\|_{0,p,R}\\
		\leq & \left\|(R^{-2} + R^2 r^2)^2 \frac{C}{r^2 \log (1/\delta)}(\xi^0 - \xi_0)\right\|_{0,p,R}\\
		= & \left(
		\int_{\frac{\delta}{R} \leq |z| \leq \frac{1}{R}} (R^{-2} + R^2 r^2)^{-2} \left| (R^{-2} + R^2 r^2)^2 \frac{C}{r^2 \log(1/\delta)}(\xi^0 - \xi_0)\right|^p 
		\right)^{\frac{1}{p}}\\
		\leq & \left(
		\int_{\frac{\delta}{R} \leq |z| \leq \frac{1}{R}}\frac{(1 + \delta_0^4)^{2p - 2}  C}{r^{2p}(\log(1/\delta))^p}|\xi^0 - \xi_0|^p 
		\right)^{\frac{1}{p}}\\
		\leq & \frac{(1 + \delta_0^4)^{2 - 2 / p} C^{\frac{1}{p}} }{\log (1/\delta)^{1- \frac{1}{p}}}\|\xi^0\|_{W^{2,p}(B_{{1} / {R}})}.
	\end{align*}
	In the above, the first inequality comes from (\ref{339}) and (\ref{eq1}), and the second inequality comes from (\ref{3312}), and the last inequlity uses (\ref{embedding}).
	\begin{align*}
		&\left\|(R^{-2} + R^2 r^2)^2 \frac{\p \beta_{\delta,R}}{\p s} \frac{\p \xi^0}{\p s}\right\|_{0,p,R} \\
		=& \left( \int_{\delta/R \leq |z| \leq 1/R} (R^{-2} + R^2 |z|^2 )^{-2} \left| (R^{-2} + R^2 r^2)^2 \frac{\p \beta_{\delta,R}}{\p s} \frac{\p \xi^0}{\p s}\right|^p \right)^{1/p}\\
		=& \left( \int_{\delta/R \leq |z| \leq 1/R} (R^{-2} + R^2 |z|^2 )^{2p-2} \left| \frac{\p \beta_{\delta,R}}{\p s} \frac{\p \xi^0}{\p s}\right|^p \right)^{1/p}\\
		\leq & (1 + \delta_0^4)^{2- \frac{2}{p}} \left\| \frac{\p \beta_{\delta,R}}{\p s} \frac{\p \xi^0}{\p s}\right\|_{L^p (\delta / R \leq |z| \leq 1 / R)}
	\end{align*}
	by the fact that
	\begin{equation}
		\label{3312}
		R^{-2} + R^2 |z|^2 \leq \left({\delta\delta_0}\right)^2 + 1 \leq \delta_0^4 + 1
	\end{equation}
	for $\delta / R \leq |z| \leq 1 / R$.
	
	We can consider using the Sobolev embedding on the manifold, which, in our case, is the two-dimensional sphere. First, we use the H\"older's inequality to get:
	\begin{align}
		\label{339}
		\left\|\frac{\p \beta_{\delta,R}}{\p s}\frac{\p \xi^0}{\p s}\right\|_p \leq \left\|\frac{\p \beta_{\delta,R}}{\p s}\right\|_{L^2 (\frac{\delta}{R}\leq |z| \leq \frac{1}{R})}\left\|\frac{\p \xi^0}{\p s}\right\|_{L^q (\frac{\delta}{R}\leq |z|\leq \frac{1}{R})}
	\end{align}
	where $q = \frac{2p}{2 - p} > 2$.
	
	For the $L^q$ norm term in Equation (\ref{339}), we want to use the Sobolev embedding on the sphere. From page 35 of Aubin \cite[Section 2.3]{aubin} we know that for compact manifold the Sobolev embedding holds. Note that here we can treat $\xi^0$ in the same way as real-valued functions on the sphere. $\left| \frac{\p \xi^0}{\p s}\right| \leq C \left| \nabla \xi^0 \right|$, where $C$ is univeral. Thus we have
	\begin{align}
		\left\|\frac{\p \xi^0}{\p s}\right\|_{L^q (\frac{\delta}{R}\leq |z|\leq \frac{1}{R})} \leq C \|\xi^0\|_{W^{2,p}(S^2)} \leq C (c, p, N, \mathcal{U}) \|\eta^0\|_{L^p} \leq C (c, p, N, \mathcal{U}) \|\eta\|_{0,p,R}\label{35}
	\end{align}
	if we apply the forthcoming Lemma \ref{Q_est}. Here we are considering $S^2$ with the round metric of radius one.
	
	We can compute using (\ref{337}) that
	\begin{equation*}
		\left\|\frac{\p \beta_{\delta,R}}{\p s}\right\|_{L^2 (\frac{\delta}{R}\leq |z| \leq \frac{1}{R})} \leq \frac{C}{\sqrt{\log (1/\delta)}}.
	\end{equation*}

	Furthermore, note that,
	\begin{align*}
		\|\xi^0\|_{W^{2,p}(B_{1 / R})} \leq C \|\xi^0\|_{W^{2,p}(S^2)} \leq C (c, p, N, \mathcal{U}) \|\eta^0\|_{L^p} \leq C (c, p, N, \mathcal{U}) \|\eta\|_{0,p,R},
	\end{align*}
	where the second inequality comes from (\ref{322}) and (\ref{xi}), and the third inequality comes from (\ref{eta_decomposition}).
	
	Now we are left with
	\begin{align*}
		&\left\| (R^{-2} + R^2 r^2)^2 \beta_{\delta, R} \frac{\p^2 \xi^0}{\p s^2}\right\|_{0, p, R}\\
		=&  \left( \int_{\delta/R \leq |z| \leq 1/R} (R^{-2} + R^2 |z|^2 )^{-2} \left| (R^{-2} + R^2 r^2)^2 \beta_{\delta,R} \frac{\p^2 \xi^0}{\p s^2}\right|^p \right)^{1/p}\\
		\leq & \left( \int_{\delta/R \leq |z| \leq 1/R} (R^{-2} + R^2 |z|^2 )^{2p-2} \left| \frac{\p \beta_{\delta,R}}{\p s} \frac{\p \xi^0}{\p s}\right|^p \right)^{1/p}\\
		\leq & (1 + \delta_0^4)^{2 - \frac{2}{p}} \left\| \frac{\p^2 \xi^0}{\p s^2}\right\|_{L^p (\delta/ R \leq |z| \leq 1 / R)}.
	\end{align*}
	Similar as in Equation (\ref{35}), we have
	\begin{equation*}
		 \left\| \frac{\p^2 \xi^0}{\p s^2}\right\|_{L^p (\delta/ R \leq |z| \leq 1 / R)} \leq C \|\xi^0\|_{W^{2, p} (S^2)} \leq C (c, p, N, \mathcal{U}) \|\eta^0\|_{L^p} \leq C(c, p, N, \mathcal{U}) \|\eta\|_{0, p, R}.
	\end{equation*}
	Thus by choosing small enough $\delta_0$ only depending on $c, p, N, \mathcal{U}$, we have the desired result.
\end{proof}

The following Lemma \ref{approx_right_inverse_est12} corresponds to an estimate in the proof of Proposition 10.5.1 on page 382 of McDuff and Salamon \cite[Proposition 10.5.1]{mcduff}. This estimate covers the part of the neck that was not in Lemma \ref{approx_right_inverse_est11}.

\begin{lemma}
	\label{approx_right_inverse_est12}
	For any $(\tilde{f}^0, \tilde{f}^\infty) \in \mathcal{M} (c, p)$, we can choose a neighborhood $\mathcal{U}$ of $\mathcal{M} (c, p)$, and $\delta_0$ small enough only depending on $c, p, N, \mathcal{U}$, such that for any $(f^0, f^\infty) \in \mathcal{U}$ and any $(\delta, R) \in \mathcal{A} (\delta_0)$, the approximate right inverse $T_{f^R}$ defined in Definition \ref{approx_right_inverse_defn} satisfies:
	\begin{equation*}
		\|(D_{f^R} \oplus \sigma) T_{f^R} \eta - \eta\|_{0,p,R, \Omega_2} \leq \frac{1}{4}\|\eta\|_{0,p,R, \Omega_2}
	\end{equation*}
	for every $\eta \in L^p (S^2, (f^R)^{-1}TN)$, where $\Omega_2 = \{ 1 / R \leq |z| \leq 1 / \delta R\}$.
\end{lemma}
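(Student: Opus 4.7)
The plan is to run the same argument as in Lemma~\ref{approx_right_inverse_est11}, with the roles of $\xi^0$ and $\xi^\infty$ interchanged via the involution $z\mapsto 1/(R^2 z)$. On $\Omega_2 = \{1/R \le |z|\le 1/(\delta R)\}$ we still have $f^{0,r}(z)=f^{\infty,r}(R^2 z)=f^R(z)=y$, so $\xi^R$ again takes values in the fixed vector space $T_y N$ and the curvature terms in (\ref{intrinsic}) vanish because every $Z_\alpha$ is zero. Moreover, on $\Omega_2$ we have $\eta^0 = \eta$ and, by the definition of $Q_{0,\infty,r}$ in (\ref{Q0infr}) together with (\ref{xi}), $D_{f^{0,r}}\xi^0 = \eta$. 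Hence from the middle case of (\ref{xiR}),
\begin{equation*}
D_{f^R}\xi^R - \eta \;=\; D_{f^{0,r}}\!\left[\beta_{\delta,R}\!\left(\tfrac{1}{R^2 z}\right)\bigl(\xi^\infty(R^2 z)-\xi_0\bigr)\right].
\end{equation*}
The elements of $\tilde V$ are supported away from the neck, so the $\sigma$-component contributes nothing in $\Omega_2$.

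Next, I would expand this using the same coordinate form of $\nabla^*\nabla$ and the metric $g^R$ as in Lemma~\ref{approx_right_inverse_est11}. The product rule yields three types of terms: (i) second derivatives of the cutoff times $(\xi^\infty(R^2 z)-\xi_0)$, (ii) first derivatives of the cutoff times first derivatives of $\xi^\infty(R^2 z)$, and (iii) the cutoff times second derivatives of $\xi^\infty(R^2 z)$. For (i) I would use the estimates (\ref{337})--(\ref{338}) for $\beta_{\delta,R}(1/(R^2 z))$ (the chain rule adds a harmless $R^2$ which is cancelled by the weight; equivalently, under the isometry $z\mapsto 1/(R^2 z)$ the cutoff pulls back to $\beta_{\delta,R}$ and the bounds become exactly the ones used before), together with the Hölder estimate (\ref{embedding}) applied to $\xi^\infty$ to obtain a factor $\log(1/\delta)^{1/p-1}$. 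For (ii) I would apply H\"older as in (\ref{339}) and the Sobolev embedding $W^{2,p}\hookrightarrow W^{1,q}$ with $q=2p/(2-p)$ on $S^2$ to control $\nabla\xi^\infty$, gaining a factor $\log(1/\delta)^{-1/2}$ from the $L^2$-norm of $\nabla\beta_{\delta,R}$. For (iii), the cutoff is bounded by $1$, so it reduces to bounding the $L^p$ norm of $\nabla^2\xi^\infty$ in $\Omega_2$ against a weighted norm; the weight factor $(R^{-2}+R^2|z|^2)^{2-2/p}$ is bounded uniformly on $\Omega_2$ by $(1+\delta_0^4)^{2-2/p}$ since $R^2|z|^2 \le 1/\delta^2$ gives $R^{-2}+R^2|z|^2 \le 1+\delta_0^4$ after using $\delta R>1/\delta_0$.

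In each of the three terms, the $W^{2,p}$ norm of $\xi^\infty$ (with respect to the round metric on $S^2$) is controlled by $\|\eta^\infty\|_{L^p}\le\|\eta\|_{0,p,R}$ via the forthcoming Lemma~\ref{Q_est} (here using (\ref{322}) together with (\ref{eta_decomposition})). Summing the three contributions gives a bound of the form $C(c,p,N,\mathcal{U})\bigl(\log(1/\delta)^{1/p-1}+\log(1/\delta)^{-1/2}+\delta_0\bigr)\|\eta\|_{0,p,R,\Omega_2}$, and choosing $\delta_0$ small enough (depending only on $c,p,N,\mathcal{U}$) makes this $\le\tfrac14\|\eta\|_{0,p,R,\Omega_2}$.

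The main technical point, which is really the only one not already handled in the previous lemma, is verifying that pre-composition with $R^2 z$ inside both $\xi^\infty$ and the argument of $\beta_{\delta,R}$ does not degrade any bound. This is exactly where the choice of the weight $\theta^R$ pays off: the involution $z\mapsto 1/(R^2 z)$ is an isometry of $g^R$ interchanging $\Omega_1$ and $\Omega_2$, so pulling the integrand back by this involution produces precisely the integrals already estimated in Lemma~\ref{approx_right_inverse_est11}. Once this symmetry is made explicit, the whole proof reduces to citing the estimates of the previous lemma, which is what I would do to keep the exposition short.
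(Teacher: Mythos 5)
Your overall plan matches the paper's: isolate the error $D_{f^R}\xi^R - \eta = D_{f^R}\bigl[\beta_{\delta,R}(1/(R^2 z))(\xi^\infty(R^2 z)-\xi_0)\bigr]$ on $\Omega_2$, note the vanishing of the curvature and $\sigma$-contributions, expand by the product rule into three pieces, and push each through the weighted norms using (\ref{337})--(\ref{338}), the H\"older estimate (\ref{embedding}), the Sobolev embedding, and Lemma~\ref{Q_est}. Your observation that the involution $z\mapsto 1/(R^2 z)$ carries $\Omega_2$ to $\Omega_1$ and reduces the computation to Lemma~\ref{approx_right_inverse_est11} (with $\xi^\infty$ in place of $\xi^0$) is a clean organizing idea and is arguably slicker than the paper's direct chain-rule bookkeeping.

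However, your treatment of term (iii) contains a genuine gap, and the estimate as you outline it does not close. Two problems. First, a slip: on $\Omega_2 = \{1/R\le|z|\le 1/(\delta R)\}$ the weight is $\theta^R(z)=1+|z|^2$ (since $|z|\ge 1/R$), not $R^{-2}+R^2|z|^2$, and the inference "$R^2|z|^2\le 1/\delta^2$ gives $R^{-2}+R^2|z|^2\le 1+\delta_0^4$" is false --- $1/\delta^2$ is unbounded as $\delta\to 0$. Second, and more seriously: even with the correct weight factor $(1+|z|^2)^{2-2/p}\le(1+\delta_0^2)^{2-2/p}$, this is $O(1)$, not $O(\delta_0)$, so your summary bound $C\bigl(\log(1/\delta)^{1/p-1}+\log(1/\delta)^{-1/2}+\delta_0\bigr)\|\eta\|$ is wrong: as set up, term (iii) contributes a quantity comparable to $C(c,p,N,\mathcal{U})\|\eta\|_{0,p,R}$, which does not become small no matter how $\delta_0$ is chosen. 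What actually rescues the lemma is a cancellation that you must state: on $\Omega_2$ we have $\eta^\infty(R^2 z)=0$ (since $|R^2 z|\ge R$) and the $\sigma(\tilde v)$-component is supported away from a fixed neighborhood of $x_2$, hence vanishes on $\{|w|\ge R\}$ once $\delta_0$ is small; therefore $D_{f^{\infty,r}}\xi^\infty=0$ there, and since $f^{\infty,r}\equiv y$ on that region the curvature term drops and this reads $(\partial_s^2+\partial_t^2)\xi^\infty=0$. Pulling back, $\beta_{\delta,R}(1/(R^2 z))\,(\partial_s^2+\partial_t^2)(\xi^\infty(R^2 z))\equiv 0$ on $\Omega_2$, so term (iii) is identically zero and only the $(\nabla^2\beta)\cdot\xi^\infty$ and $\nabla\beta\cdot\nabla\xi^\infty$ pieces --- each carrying a decaying power of $\log(1/\delta)$ --- remain. (The paper's own write-up also does not make this cancellation explicit and records a non-small term-(iii) bound, so on this point you should not mirror the paper's exposition; keep the full Laplacian $\partial_s^2+\partial_t^2$ together rather than estimating the $s$- and $t$-parts separately.)
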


\begin{proof}
	We still use the coordinates as in Equation (\ref{polar}). Here we have
	\begin{equation*}
		\xi^R (z) = \xi^0 (z) + \beta_{\delta,R}\left(\frac{1}{R^2 z}\right) (\xi^\infty (R^2 z) - \xi_0).
	\end{equation*}
	We know that $D_{f^R}\xi^0 = D_{f^{0,r}} \xi^0 = \eta^0 = \eta$. We also know that $D_{f^R} \xi^\infty(R^2 z) = D_{f^{\infty,r}(R^2 \cdot)}\xi^\infty (R^2 z)$. Again, we use $(s,t)$ coordinates for $1/R \leq |z| \leq 1 / \delta R$ ($z = s + it$). We use $(\tilde{s} = R^2 s,\ \tilde{t} = R^2 t)$ coordinates for $R \leq R^2 |z| \leq R / \delta$. We know that in the annulus $1/ R \leq |z| \leq 1/ \delta R$, we have
	\begin{equation*}
		f^R (z) = f^{\infty,r}(R^2 z) = f^0 (0) = f^\infty (\infty) = y.
	\end{equation*}
	Similar as in the proof of Lemma \ref{approx_right_inverse_est11}, the error is
	\begin{equation*}
		- (1 + |z|^2)^2 \left( \nabla_{\frac{\p}{\p s}} \nabla_{\frac{\p}{\p s}} + \nabla_{\frac{\p}{\p s}} \nabla_{\frac{\p}{\p s}}\right) \left( \beta_{\delta, R} \left(\frac{1}{R^2 z}\right) (\xi^\infty (R^2 z) - \xi_0) \right).
	\end{equation*}
	We only need to estimate the $0, p, R$ norm of
	\begin{align*}
		&(1 + |z|^2)^2 \left( \frac{\p^2}{\p s^2} \left(\beta_{\delta, R} \left(\frac{1}{R^2 z}\right)\right) (\xi^\infty (R^2 z) - \xi_0) + 2 \frac{\p}{\p s}  \left(\beta_{\delta, R} \left(\frac{1}{R^2 z}\right) \right) \frac{\p}{\p s} (\xi^\infty (R^2 z)) +\right.\\ &\left. \beta_{\delta, R} \left(\frac{1}{R^2 z}\right) \frac{\p^2}{\p s^2} (\xi^\infty (R^2 z))\right).
	\end{align*}
	
	We compute that
	\begin{align*}
		\left|\frac{\p}{\p s}\left( \beta_{\delta,R}\left(\frac{1}{R^2 z}\right) \right)\right| \leq & \frac{C}{r\log (1/\delta)},\\
		\left|\frac{\p^2}{\p s^2}\left( \beta_{\delta,R}\left(\frac{1}{R^2 z}\right) \right)\right| \leq &\frac{C}{r^2 \log (1/\delta)},
	\end{align*}
	where $C$ is a universal constant. Let's first estimate
	\begin{align*}
		&\left\|(1 + |z|^2)^2 \frac{\p}{\p s}\left( \beta_{\delta,R}\left(\frac{1}{R^2 z}\right) \right) \frac{\p}{\p s} \left( \xi^\infty(R^2 z) \right)\right\|_{0,p,R}\\
		=& \left( \int_{0}^{2\pi} d\theta \int_{1/R}^{1/(\delta R)} rdr\ (1 + r^2)^{2p - 2} \left| \frac{\p}{\p s}\left(\beta_{\delta,R}\left(\frac{1}{R^2 z}\right)\right)\frac{\p}{\p s}\left( \xi^\infty (R^2 z)\right)\right|^p\right)^{\frac{1}{p}}\\
		\leq & \left( 1 + \left(\frac{1}{\delta R}\right)^2\right)^{2 - \frac{2}{p}}\left( \int_{0}^{2\pi} d\theta \int_{1/R}^{1/(\delta R)} rdr\ \left| \frac{\p}{\p s}\left(\beta_{\delta,R}\left(\frac{1}{R^2 z}\right)\right)\right|^p \left|\frac{\p}{\p s}\left( \xi^\infty (R^2 z)\right)\right|^p\right)^{\frac{1}{p}}\\
		\leq & \left( 1 + \left(\frac{1}{\delta R}\right)^2\right)^{2 - \frac{2}{p}}\left( \int_A \ \left(\left| \frac{\p}{\p s}\left(\beta_{\delta,R}\left(\frac{1}{R^2 z}\right)\right)\right|^p \right)^{\frac{2}{p}} \right)^{\frac{1}{2}} \\
		&\left(\int_A \left(\left|\frac{\p}{\p s}\left( \xi^\infty (R^2 z)\right)\right|^p\right)^{\frac{2}{2 - p}}\right)^{\frac{2 - p}{2p}}\\
	\end{align*}
	where $A$ denotes the annulus $1/R \leq |z| \leq 1 / (\delta R)$. The last inequality comes from H\"{o}lder's inequality.
	
	For the integral involving the cutoff function, we have
	\begin{align*}
		\left( \int_A \ \left(\left| \frac{\p}{\p s}\left(\beta_{\delta,R}\left(\frac{1}{R^2 z}\right)\right)\right|^p \right)^{\frac{2}{p}} \right)^{\frac{1}{2}} \leq & 
		\left( \int_0^{2\pi} d\theta \int_{1/R}^{1/(\delta R)} rdr \ \frac{C}{r^2 (\log (1/\delta))^2} \right)^{\frac{1}{2}}\\
		\leq & C\sqrt{\frac{2\pi}{\log (1/\delta)}}
	\end{align*}
	where $C$ is a universal constant.
	
	Here are some thoughts about measuring the norm of the derivative of $\xi^\infty$. Let us note that with the coordinate chart given by stereographic projection, there is a weight for the derivative of $\xi^\infty$, since in the coordinate chart, at the points away from the origin, say $w \in \mathbb{R}^2\backslash \{0\}$, the vector is dilated with a ratio of $(1 + |w|^2)^{-1}$. We have
	\begin{align*}
		\frac{\p}{\p s}(\xi^\infty (R^2 z)) = R^2 \frac{\p \xi^\infty}{\p s}(R^2 z) \leq (1 + |R^2 z|^2 )\frac{\p \xi^\infty}{\p s}(R^2 z)
	\end{align*}
	
	Recall that using the coordinates from the stereographic projection of the Riemann sphere $S^2$, the $W^{2,p}$ norm is defined as
	\begin{align*}
		\|\xi\|_{W^{2,p}(S^2, f^{-1} TN)} =& \left( \int_{S^2} |\xi|^p + |\nabla\xi|^p + |\nabla^2 \xi |^p \right)^{\frac{1}{p}}\\
		=& \left( \int_{\mathbb{R}^2} (1 + r^2 )^{-2}|\xi|^p + (1 + r^2)^{p - 2} (|\nabla_s \xi|^p + |\nabla_t \xi|^p) +\right.\\
		&\left. (1 + r^2)^{2p - 2} \left( \left| \frac{\p^2 \xi}{\p s^2}\right|^p + \left| \frac{\p^2 \xi}{\p s\p t}\right|^p + \left| \frac{\p^2 \xi}{\p t^2}\right|^p \right)\right)^{\frac{1}{p}} 
	\end{align*}
	Note that here we cannot directly consider the range of $\xi$ to be $\mathbb{R}^N$. By the definition of the $W^{2,p}$ norm for maps mapping to vector bundles, we need to consider the local trivialization of that vector bundle. However, for $\frac{\delta}{R} \leq |z| \leq \frac{1}{\delta R}$, since $f(z) \equiv y$, these two definitions are equivalent.
	
	Denote $\frac{2p}{2 - p}$ by $p^*$. Since $1 < p < 2$, we know $p^* > 2$. Thus it is easy to see that, in the annulus $\frac{1}{R} \leq |z| \leq \frac{1}{\delta R}$,
	\begin{align*}
		R^{2p^*} \leq R^4 ( 1 + R^2 )^{p^* - 2} \leq R^4 (1 + R^4 |z|^2)^{p^* - 2} 
	\end{align*}
	
	We know that
	\begin{align*}
		&\left(\int_A \left(\left|\frac{\p}{\p s}\left( \xi^\infty (R^2 z)\right)\right|^p\right)^{\frac{2}{2 - p}}\right)^{\frac{2 - p}{2p}} \\
		= & \left(\int_A \left| R^2 \frac{\p \xi^\infty}{\p s} (R^2 z)\right|^{p^*}\right)^{\frac{1}{p^*}}\\
		\leq & \left(\int_A R^4 (1 + R^4 |z|^2)^{p^* - 2}\left| \frac{\p \xi^\infty}{\p s} (R^2 z)\right|^{p^*}\right)^{\frac{1}{p^*}}\\
		= & \left(\int_A R^4 (1 + |R^2 z|^2)^{-2} \left| (1 + |R^2 z|^2) \frac{\p \xi^\infty}{\p s} (R^2 z)\right|^{p^*}\right)^{\frac{1}{p^*}}\\
		\leq & \left(\int_{R\leq |w| \leq R/\delta} (1 + |w|^2)^{-2} \left| (1 + |w|^2) \frac{\p \xi^\infty}{\p s} (w)\right|^{p^*}\right)^{\frac{1}{p^*}}\\
		\leq & C \|\nabla \xi^\infty\|_{L^{p^*}(S^2)}
	\end{align*}
	where $C$ is universal.
	
	Then we use the Sobolev embedding theorem for closed Riemann manifolds, as well as Lemma \ref{Q_est}:
	\begin{align*}
		\|\nabla \xi^\infty \|_{L^{p^*}} \leq C \|\xi^\infty\|_{W^{2,p}(S^2)} \leq C(c, p, N, \mathcal{U}) \|\eta^\infty\|_{L^p(S^2)}
	\end{align*}
	
	From the definition of $\eta^\infty$, we have
	\begin{align*}
		\|\eta^\infty\|_{L^p (S^2)} = & \left( \int_{B_R (0)} \left| \eta\left(\frac{z}{R^2}\right)\right|^p (1 + |z|^2)^{-2} \right)^{\frac{1}{p}}\\
		= & \left( R^4 \int_{B_{{1}/{R}} (0)} \left| \eta(w)\right|^p (1 + R^4 |w|^2)^{-2}\right)^{\frac{1}{p}}\\
		= & \left( \int_{B_{{1}/{R}} (0)} \left| \eta(w)\right|^p (R^{-2} + R^2 |w|^2)^{-2}\right)^{\frac{1}{p}}\\
		\leq &\|\eta\|_{0,p,R}
	\end{align*}
	Thus we have
	\begin{align*}
		&\left\|(1 + |z|^2)^2 \frac{\p}{\p s}\left( \beta_{\delta,R} \left(\frac{1}{R^2 z} \right) \right) \frac{\p}{\p s} \left( \xi^\infty(R^2 z) \right)\right\|_{0,p,R}\\
		\leq & C(c, p, N, \mathcal{U}) \left( 1 + \left(\frac{1}{\delta R}\right)^2\right)^{2 - \frac{2}{p}} \sqrt{\frac{2\pi}{\log (1/\delta)}} \|\eta\|_{0,p,R}.
	\end{align*}
	Similarly,
	\begin{align*}
		&\left\|(1 + |z|^2)^2 \beta_{\delta,R} \left(\frac{1}{R^2 z} \right)  \frac{\p^2}{\p s^2} \left( \xi^\infty(R^2 z) \right)\right\|_{0,p,R}\\
		\leq & \left\|(1 + |z|^2)^2  \frac{\p^2}{\p s^2} \left( \xi^\infty(R^2 z) \right)\right\|_{0,p,R}\\
		= & \left(\int_{1 / R \leq |z| \leq 1 / (\delta R)} (1 + |z|^2)^{2p - 2} R^{4p} \left| \frac{\p^2 \xi^\infty}{\p s^2} (R^2 z) \right|^p dsdt \right)^{1/p}\\
		= & \left(\int_{1 / R \leq |z| \leq 1 / (\delta R)} (1 + |z|^2)^{2p - 2} R^{4p} (1 + |R^2 z|^2)^{-2p} \left| (1 + |R^2 z|^2)^2 \frac{\p^2 \xi^\infty}{\p s^2} (R^2 z)\right|^p dsdt \right)^{1 / p}\\
		\leq & \left( 1 + \frac{1}{\delta^2 R^2} \right)^{2 - \frac{2}{p}}  \left(\int_{R \leq |w| \leq R / \delta } R^{4p - 4} ( 1 + |w|^2)^{2 - 2p} \left| (1 + |w|^2)^2 \frac{\p^2 \xi^\infty}{\p s^2} (w) \right|^p \right)^{1 / p}\\
		\leq & (1 + \delta_0^2)^{2 - \frac{2}{p}} \|\xi^\infty\|_{W^{2, p} (S^2)}.
	\end{align*}
	
	On the other hand,
	\begin{align*}
		&\left\| \left( (1 + r^2)^2 \frac{\p^2}{\p s^2} \left(\beta_{\delta,R} \left(\frac{1}{R^2 z}\right) \right) \right) (\xi^\infty (R^2 z) - \xi_0) \right\|_{0,p,R}\\
		\leq & \left\| \frac{C (1 + r^2)^2}{r^2 \log(1/\delta)} (\xi^\infty (R^2 z) - \xi_0) \right\|_{0,p,R}\\
		= & \left( \int_A (1 + r^2)^{-2} \left| \frac{C (1 + r^2)^2}{r^2 \log(1/\delta)} (\xi^\infty (R^2 z) - \xi_0) \right|^p \right)^{\frac{1}{p}}\\
		\leq & \frac{C}{\log (1/\delta)} \left( 1 + \left(\frac{1}{\delta R} \right)^2 \right)^{2 - \frac{2}{p}} \left( \int_A \frac{1}{r^{2p}} | \xi^\infty(R^2 z) - \xi_0 |^p \right)^{\frac{1}{p} }
	\end{align*}
	
	We can consider changing coordinates $w = {1}/{R^2 z}$. Since $\frac{1}{R} \leq |z| \leq \frac{1}{\delta R}$, we have $\frac{\delta}{R} \leq \frac{1}{R^2 z} \leq \frac{1}{R}$. Also, in the new coordinate system, $\xi_0 = \xi^\infty (0)$. We have
	\begin{align}
		\label{embedding_2}
		|\xi^\infty (w) - \xi^\infty (0)| \leq C \| \xi^\infty\|_{W^{2,p}(S^2)} |w|^{2 - \frac{2}{p}}
	\end{align}
	where $C$ is a universal constant.
	
	Thus we have
	\begin{align*}
		&\left( \int_A \frac{1}{r^{2p}} | \xi^\infty(R^2 z) - \xi_0 |^p \right)^{\frac{1}{p} } \\
		\leq & C \|\xi^\infty \|_{W^{2,p}(S^2)} \left( \int_{\frac{1}{R}}^{\frac{1}{\delta R}} \frac{1}{r^{2p}} \left| \frac{1}{R^2 r} \right|^{2p - 2} rdr \right)^{\frac{1}{p}}\\
		= & C \|\xi^\infty \|_{W^{2,p}(S^2)} \left( \int_{\frac{1}{R}}^{\frac{1}{\delta R}} (rR)^{4 - 4p} \frac{1}{r} dr \right)^{\frac{1}{p}}\\
		\leq & C \|\xi^\infty \|_{W^{2,p}(S^2)} \left( \int_{\frac{1}{R}}^{\frac{1}{\delta R}} \frac{1}{r} dr \right)^{\frac{1}{p}}\\
		\leq & C \|\xi^\infty \|_{W^{2,p}(S^2)} \log(1 / \delta)^{\frac{1}{p}}
	\end{align*}
	
	Similar as in the proof of Lemma \ref{approx_right_inverse_est11}, we know
	\begin{align*}
		\|\xi^\infty \|_{W^{2,p}(S^2)} \leq C(c, p, N, \mathcal{U}) \|\eta\|_{0,p,R} 
	\end{align*}
	
	Thus we get
	\begin{align*}
		&\left\| \left( (1 + r^2)^2 \frac{\p^2}{\p s^2} \left(\beta_{\delta,R} \left(\frac{1}{R^2 z}\right) \right) \right) (\xi^\infty (R^2 z) - \xi_0) \right\|_{0,p,R}\\
		\leq & \frac{C(c, p, N, \mathcal{U})}{\log (1/\delta)^{1 - \frac{1}{p}}} \left( 1 + \left(\frac{1}{\delta R} \right)^2 \right)^{2 - \frac{2}{p}}  \|\eta\|_{0,p,R}
	\end{align*}
	
	Hence we can choose $\delta_0$ small enough only depending on $c, p, N, \mathcal{U}$ to get the desired inequality.
\end{proof}

Now we are ready to prove the analog of Proposition 10.5.1 on page 382 of McDuff and Salamon \cite[Proposition 10.5.1]{mcduff}.

\begin{proposition}(Compare McDuff and Salmon \cite[Proposition 10.5.1, p. 382]{mcduff} for the analogous statement for J-holomorphic curves)
	\label{approx_right_inverse}
	For any $(\tilde{f}^0, \tilde{f}^\infty) \in \mathcal{M} (c, p)$, we can choose a neighborhood $\mathcal{U}$ of $\mathcal{M} (c, p)$, and we can choose $\delta_0 > 0$, $c_0 > 0$ only depending on $c, p, N, \mathcal{U}$, such that for any $(f^0, f^\infty) \in \mathcal{U}$ and any $(\delta, R) \in \mathcal{A} (\delta_0)$, the approximate right inverse $T_{f^R}$ defined in Definition \ref{approx_right_inverse_defn} satisfies:
	\begin{equation}
		\label{T_est}
		\|(D_{f^R} \oplus \sigma) T_{f^R} \eta - \eta\|_{0,p,R} \leq \frac{1}{2}\|\eta\|_{0,p,R},\quad \|T_{f^R} \eta\|_{2,p,R, V} \leq \frac{c_0}{2}\|\eta\|_{0,p,R}
	\end{equation}
	for every $\eta \in L^p (S^2, (f^R)^{-1}TN)$.
\end{proposition}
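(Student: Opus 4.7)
The plan is to decompose $S^2 \#_{\delta, R} S^2$ into the neck $N_{\delta, R} = \Omega_1 \cup \Omega_2 = \{\delta/R \le |z| \le 1/(\delta R)\}$ and its complement, apply Lemmas \ref{approx_right_inverse_est11} and \ref{approx_right_inverse_est12} on the neck pieces, and verify that the error $(D_{f^R}\oplus \sigma)T_{f^R}\eta - \eta$ vanishes off the neck. For $|z|\ge 1/(\delta R)$, definitions (\ref{xiR}), (\ref{fR}), and (\ref{eta_decomposition}) give $\xi^R = \xi^0$, $f^R = f^{0,r}$, and $\eta = \eta^0$ respectively; since $(\xi^0, \xi^\infty, \tilde v) = Q_{0,\infty,r}(\eta^0, \eta^\infty)$ is a preimage of $(\eta^0, \eta^\infty)$ under $D_{0,\infty,r}\oplus \sigma$ by (\ref{Q0infr}), we have $D_{f^{0,r}}\xi^0 + \sigma(\tilde v)|_{\Sigma_1} = \eta^0$ everywhere on $\Sigma_1$, so the error is identically zero on this exterior region. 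The isometry $z \mapsto 1/(R^2 z)$ of the metric $g^R$ swaps this region with $\{|z|\le \delta/R\}$ and exchanges $\xi^0 \leftrightarrow \xi^\infty(R^2 \cdot)$, so the same argument takes care of the other exterior piece. Adding the $p$-th powers of the $L^p$ errors over the four pieces and extracting $p$-th roots gives the first inequality of (\ref{T_est}) with constant $1/4$ in place of $1/2$.

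For the norm bound I would split $\|\xi^R\|_{2, p, R}$ in the same way. Off the neck, $\xi^R$ equals $\xi^0$ or $\xi^\infty(R^2 \cdot)$, and the weighted metric $g^R$ is isometric to the standard round metric on $S^2$ (directly on $\{|z|\ge 1/R\}$, and via the involution on $\{|z|\le 1/R\}$), so these contributions are controlled by $\|\xi^0\|_{W^{2, p}(S^2)} + \|\xi^\infty\|_{W^{2, p}(S^2)}$; by (\ref{322}) and the elementary inequality $\|\eta^0\|_{L^p} + \|\eta^\infty\|_{L^p} \le C\|\eta\|_{0, p, R}$ this is bounded by a constant multiple of $\|\eta\|_{0, p, R}$. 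On the neck, (\ref{xiR}) expresses $\xi^R$ as a sum of $\xi^0$, $\xi^\infty(R^2 \cdot)$, $\xi_0$ and the cutoff $\beta_{\delta, R}$ times differences; expanding $\nabla \xi^R$ and $\nabla^2 \xi^R$ by the Leibniz rule and estimating each term by the H\"older-plus-Sobolev machinery used in Lemmas \ref{approx_right_inverse_est11} and \ref{approx_right_inverse_est12} (with (\ref{337})--(\ref{338}) for the cutoff derivatives and (\ref{embedding}) for pointwise control on $\xi^0 - \xi_0$ and $\xi^\infty - \xi_0$) produces a bound by a constant times $\|\eta\|_{0, p, R}$. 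The $\tilde V$-component of $T_{f^R}\eta$ is the $\tilde V$-component of $Q_{0, \infty, r}(\eta^0, \eta^\infty)$, so (\ref{322}) controls it as well, and collecting constants and relabeling yields the second inequality of (\ref{T_est}).

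The main obstacle is the bookkeeping on the neck: one must verify that each term in the Leibniz expansion, after being weighted by the correct power of $\theta^R$, produces either a harmless constant factor of the form $(1 + \delta_0^4)^{2 - 2/p}$ (as in the uniform-in-$\delta$ pieces of Lemmas \ref{approx_right_inverse_est11} and \ref{approx_right_inverse_est12}) or a vanishing factor $(\log(1/\delta))^{-\alpha}$ for some $\alpha > 0$. Because the first inequality of (\ref{T_est}) requires only the factor $1/2$ while the second allows an arbitrary but fixed constant $c_0 = c_0(c, p, N, \mathcal{U})$, the $(2, p, R)$-estimate on the neck need not carry any decay in $\delta$ — it only has to be uniformly bounded as $(\delta, R)$ ranges over $\mathcal{A}(\delta_0)$ — which simplifies the analysis relative to the proofs of the two preceding lemmas.
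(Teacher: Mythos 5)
Your proposal is correct and matches the paper's proof in all essentials: the first inequality follows from the locality of $D_{f^R}$ (which makes the error vanish on $\{|z|\ge 1/(\delta R)\}$ and $\{|z|\le \delta/R\}$, since $Q_{0,\infty,r}$ is a genuine right inverse there and $f^R$, $\xi^R$, $\eta$ agree with $f^{0,r}$, $\xi^0$, $\eta^0$ respectively) together with Lemmas \ref{approx_right_inverse_est11} and \ref{approx_right_inverse_est12} on the two neck annuli, while the second follows from the equivalence of the $(2,p,R)$-norm with the unweighted $W^{2,p}(S^2)$-norms of $\xi^0$ and $\xi^\infty$, the bound (\ref{322}) on $Q_{0,\infty,r}$, and a Leibniz-rule estimate of the cutoff terms on the neck using (\ref{337})--(\ref{338}) and (\ref{embedding}). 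Your closing remark that the $(2,p,R)$-bound only requires uniformity in $(\delta,R)$ rather than decay in $\delta$ is also the correct reading of why that half is less delicate than the preceding two lemmas.
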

Recall that the $2, p, R, V$ norm is defined in Definition \ref{weighted_with_V}.

\begin{proof}
	We have $T_{f^R} \eta = (\xi^R, \tilde{v})$ for each $\eta \in L^p_{f^R}$, and must prove that
	\begin{equation}
		\| D_{f^R} \xi^R + v - \eta \|_{0,p,R} \leq \frac{1}{2}\|\eta\|_{0,p,R},
	\end{equation}
	where $v = \sigma (\tilde{v})$.
	
	Since $D_{f^{0,r}} \xi^0 + v = \eta^0$ and $D_{f^{\infty,r}}\xi^\infty + v = \eta^\infty$, the term on the left hand side vanishes for $|z| \geq 1/\delta R$ and for $|z| \leq \delta/R$. For $\delta / R \leq |z| \leq 1 / \delta R$, we can apply Lemma \ref{approx_right_inverse_est11} and Lemma \ref{approx_right_inverse_est12}. The first equality is proved.
	
	Before we prove the second inequality, we take a closer look of how the $W^{2, p}$ norm is defined on the weighted sphere.
	
	We still consider the stereographic projection of the weighted $S^2$ (which is the connected sum of the original two $S^2$s). We consider $\xi \in W^{2,p}(S^2, (f^R)^{-1}TN)$.
	
	For $|z| \geq \frac{1}{R}$, we have
	\begin{equation*}
		f^R(z) = \left\{
		\begin{aligned}
			&f^0 (z),\ &&|z| \geq \frac{2}{\delta R}\\
			&\exp_y (\rho(\delta R z) \zeta^0 (z)),\ &&\frac{1}{R} \leq |z| \leq \frac{2}{\delta R}
		\end{aligned}
		\right.
	\end{equation*}
	Since $\mathcal{U}$ is precompact, we know that the norm of  $d\exp_y$ and $d\exp_y^{-1}$ are uniformly bounded. Thus, there exists $c_1, c_2$ only depending on $c, p, N, \mathcal{U}$ such that
	\begin{equation*}
		c_1 \leq \|d\exp_y \| \leq c_2, \quad c_1 \leq \|d\exp_y^{-1} \| \leq c_2
	\end{equation*}
	Furthermore, since $f^0 (z) = \exp_y \zeta^0 (z)$, we have $\zeta^0 (0) = 0$, $|\zeta^0 (z) | \leq C \sup \|df^0\| |z| \leq C(c) |z|$, and $|\nabla \zeta^0 (z)| \leq C(c, p, N, \mathcal{U}) \sup \|df^0\|$.  Thus for $\frac{1}{R} \leq |z| \leq \frac{2}{\delta R}$ we have
	\begin{align*}
		\sup \|d f^R \| \leq C(c, p, N, \mathcal{U})
	\end{align*}
	
	We know that the image of $|z| \geq \frac{1}{R}$ under $f^R$ is the same as the image of $S^2$ under $f^{0,r}$, which is the same as the image of $S^2$ under $f^0$. Since $S^2$ is compact and $\mathcal{U}$ is precompact, we know there is a uniform injective radius only depending on $c, p, N, \mathcal{U}$. 
	
	We can choose coordinate charts as follows: We pick a set of points on $S^2$ and a radius $\sigma < \pi / 2$ such that the geodesic balls of radius $\sigma$ and centered at those points yield a covering of $S^2$. Furthermore, since $\|df^R\|$ is bounded, we can choose $\sigma$ small enough so that the following holds: For any of these geodesic balls, say $B_\sigma (x_1)$, let the radius of the image be less than the injective radius of $f^0 (S^2)$. Now consider the normal coordinates at $f^R(x_1)$, we have a coordinate chart for the image of $B_\sigma (x_1)$. We know that this coordinate chart will only depend on $c, p, N, \mathcal{U}$.
	
	Now on each of these coordinate charts, we can consider the vectors in coordinates, and thus talk about the derivatives in coordinates. That is how we define the $W^{2,p}$ space for $|z| \geq \frac{1}{R}$. Since we choose $\sigma < \pi / 2$, we know that in all these coordinate charts, if we consider the Riemann metric matrix $g_N$ of $N$, we have $0 < C^{-1} \leq \|g_N \| \leq C$ where $C$ is a universal constant. Thus we know that this norm is equivalent to the $2,p,R$ norm, which is formed by using the stereographic projection coordinate chart with the weight.
	
	Next, we consider $|z| \leq \frac{1}{R}$:
	\begin{equation*}
		f^R(z) = \left\{
		\begin{aligned}
			&f^\infty \left(R^2 z\right),\ &&|z| \leq \frac{\delta}{2R}\\
			&\exp_y \left(\rho\left(\frac{\delta}{Rz}\right) \zeta^\infty (R^2 z)\right),\ &&\frac{\delta}{2R} \leq |z| \leq \frac{1}{R}
		\end{aligned}
		\right.
	\end{equation*}
	
	By definition,
	
	\begin{equation*}
		f^{\infty,r}(z) = \left\{
		\begin{aligned}
			&f^\infty (\infty),\ &&|z| \geq \delta R\\
			&\exp_y \left(\rho\left(\frac{\delta R}{z}\right) \zeta^\infty (z)\right),\ &&\frac{\delta R}{2} \leq |z| \leq \delta R\\
			&f^{\infty} (z),\ &&|z| \leq \frac{\delta R}{2}
		\end{aligned}
		\right.
	\end{equation*}
	
	For $f^{\infty,r} (z)$, we can consider the coordinate change $w = \frac{1}{z}$ and do the same as above for $f^{0,r}$. We want to show that this norm is equivalent to the $2,p,R$ norm for $|z| \leq \frac{1}{R}$. Namely, for $\xi^\infty \in W^{2,p}$ defined on $|z| \leq R$ (in $S^2$, not $\mathbb{R}^2$), we want to show that $\|\xi^\infty (R^2 z)\|_{2,p,R}$ is equivalent to $\|\xi^\infty\|_{W^{2,p}}$.
	
	From the construction of the $W^{2,p}$ norm (details written in the case $|z| \geq \frac{1}{R}$), we know the $W^{2,p}$ norm is equivalent to
	\begin{align*}
		\|\xi\|_{W^{2,p}(|z| \leq R)} =& \left( \int_{|z| \leq R} (1 + r^2 )^{-2}|\xi|^p + (1 + r^2)^{p - 2} (|\nabla_s \xi|^p + |\nabla_t \xi|^p) +\right.\\
		&\left. (1 + r^2)^{2p - 2} \left( \left| \frac{\p^2 \xi}{\p s^2}\right|^p + \left| \frac{\p^2 \xi}{\p s\p t}\right|^p + \left| \frac{\p^2 \xi}{\p t^2}\right|^p \right)\right)^{\frac{1}{p}} 
	\end{align*}
	We can use change of variables in the integration to directly verify that this is equivalent to the $2,p,R$ norm. For example, consider $\nabla \xi$ and the change of variables $w = R^2 z$ we have
	\begin{align*}
		&\int_{|z| \leq \frac{1}{R}} (R^{-2} + |z|^2 R^2 )^{-2 + p} |R^2 \nabla \xi^\infty(R^2 z)|^p \\
		=& R^{-4} \int_{|w| \leq R} (R^{-2} + |w|^2 R^{-2})^{-2 + p} R^{2p} |\nabla \xi^\infty (w)|^p\\
		=& \int_{|w| \leq R} (1 + |w|^2)^{-2 + p} |\nabla \xi^\infty(w)|^p
	\end{align*}
	
	Now we have shown that $W^{2,p}(S^2)$ norm is equivalent to $2,p,R$ norm, where $S^2$ is the weighted sphere. In particular, $L^p (S^2)$ norm is equivalent to $0,p,R$ norm.
	
	Now let's come back to showing
	\begin{align*}
		\|T_{f^R} \eta\|_{2,p,R, V} \leq \frac{c_0}{2}\|\eta\|_{0,p,R}
	\end{align*}
	
	From Equation (\ref{xiR}) and what we proved above, we know that we are only left to consider $\beta_{\delta,R}(\frac{1}{R^2 z})(\xi^\infty(R^2 z) - \xi_0)$ for $\frac{1}{R} \leq |z| \leq \frac{1}{\delta R}$ and $\beta_{\delta, R}(z) (\xi^0(z) - \xi_0)$ for $\frac{\delta}{R}\leq |z| \leq \frac{1}{R}$. 
	
	For $\frac{\delta}{R} \leq |z| \leq \frac{1}{R}$, when we consider the $2,p,R$ norm, for parts where there is no derivative on $\beta$, the second order derivative term can be controlled directly. The other parts can be estiamted in the same way as when there is a derivative of $\beta$. Thus we only need to consider the terms (note that we will have to power by $\frac{1}{p}$, which is not written out in the formula)
	\begin{align*}
		\int_{\frac{\delta}{R} \leq |z| \leq \frac{1}{R}} & (R^{-2} + R^2 |z|^2 )^{p - 2}|\nabla \beta_{\delta,R}(z) (\xi^0 (z) - \xi_0)|^p \\
		&+ (R^{-2} + R^2 |z|^2 )^{2p - 2} (|\nabla^2 \beta_{\delta,R}(z) (\xi^0 (z) - \xi_0)|^p + |\nabla \beta_{\delta,R}(z) \nabla\xi^0 (z)|^p)
	\end{align*}
	For the first two parts of the above formula, we use estimates for derivatives of $\beta$ as well as Equation (\ref{embedding}). For the last part, the estimate is similar to the estimate immediately above Equation (\ref{embedding}). For $\frac{1}{R} \leq |z| \leq \frac{1}{\delta R}$, for parts where there is no derivative of $\beta$, we can just estimate
	\begin{align*}
		\int_{\frac{1}{R} \leq |z| \leq \frac{1}{\delta R}} & (1 + |z|^2)^{-2} |\xi^\infty (R^2 z) - \xi_0 |^p + (1 + |z|^2)^{p-2} R^{2 p} |\nabla\xi^\infty (R^2 z)|^p +\\ & (1 + |z|^2)^{2p-2} R^{4p} |\nabla^2 \xi^\infty (R^2 z)|^p
	\end{align*}
	Similar as before, the second derivative term can be controlled directly. The other parts can be controlled in the same way as when there is a derivative of $\beta$.
	
	Now we are only left with
	\begin{align*}
		\int_{\frac{1}{R} \leq |z| \leq \frac{1}{\delta R}} & (1 + |z|^2 )^{p - 2} \left|\nabla \beta_{\delta,R}\left(\frac{1}{R^2 z}\right) \frac{1}{R^2 |z|^2} (\xi^\infty (R^2 z) - \xi_0)\right|^p \\
		& + (1 + |z|^2 )^{2p - 2} \left|\nabla \beta_{\delta,R} \left(\frac{1}{R^2 z}\right) \frac{2}{R^2 |z|^3} (\xi^\infty (R^2 z) - \xi_0) \right|^p\\
		&+ (1 + |z|^2 )^{2p - 2} \left(\left|\nabla^2 \beta_{\delta,R} \left(\frac{1}{R^2 z}\right) \frac{1}{R^4 |z|^4} (\xi^\infty (z) - \xi_0)\right|^p\right.\\
		&+ \left.\left|\nabla \beta_{\delta,R} \left(\frac{1}{R^2 z}\right) \frac{1}{|z|^2} \nabla\xi^\infty (R^2 z)\right|^p\right)
	\end{align*}
	The estimate for the first three parts is the same: we use Equation (\ref{embedding_2}) and change of coordinates. The estimate for the last term is the same as estimating
	\begin{align*}
		&\left\|(1 + |z|^2)^2 \frac{\p}{\p r}\left( \beta_{\delta,R} \left(\frac{1}{R^2 z} \right) \right) \frac{\p}{\p r} \left( \xi^\infty(R^2 z) \right)\right\|_{0,p,R}\\
		\leq & C (c, p, N, \mathcal{U}) \left( 1 + \left(\frac{1}{\delta R}\right)^2\right)^{2 - \frac{2}{p}} \sqrt{\frac{2\pi}{\log (1/\delta)}} \|\eta\|_{0,p,R}
	\end{align*}
	
	The proof is complete.
\end{proof}

From the above Proposition \ref{approx_right_inverse} we can construct the true right inverse. This is the same idea as on page 387 of McDuff and Salamon \cite[Section 10.5]{mcduff}
\begin{definition}
	\label{real_right_inverse}
	For any $(\tilde{f}^0, \tilde{f}^\infty) \in \mathcal{M} (c, p)$, we can choose a neighborhood $\mathcal{U}$ of $\mathcal{M} (c, p)$, and we can choose $\delta_0 > 0$ only depending on $c, p, N, \mathcal{U}$, such that for any $(f^0, f^\infty) \in \mathcal{U}$ and any $(\delta, R) \in \mathcal{A} (\delta_0)$, we define
	\begin{equation*}
		Q_{f^R} := T_{f^R} ((D_{f^R} \oplus \sigma) T_{f^R})^{-1} = \sum_{k = 0}^\infty T_{f^R} (\mathbbm{1} - (D_{f^R} \oplus \sigma) T_{f^R})^k.
	\end{equation*}
\end{definition}

Now we have
\begin{align*}
	(D_{f^R} \oplus \sigma) Q_{f^R} =& \mathbbm{1},\\
	\| Q_{f^R} \eta\|_{2,p,R, V} \leq & c_0 \|\eta\|_{0,p,R},
\end{align*}
where $c_0$ only depends on $c, p, N, \mathcal{U}$.

\subsection{Construction of the gluing map}
\label{construction_of_the_gluing_map}

Let us further tailor Proposition \ref{implicit_general} for our setting:
\begin{theorem}
	\label{tailored_IFT}
	For any $(\tilde{f}^0, \tilde{f}^\infty) \in \mathcal{M} (c, p)$, we can choose a neighborhood $\mathcal{U}$ of $\mathcal{M} (c, p)$, and $\delta_0$ as in Definition \ref{approx_right_inverse_defn}. Consider $(\delta, R) \in \mathcal{A} (\delta_0)$. Let $\Sigma$ denote $S^2$ with the weighted metric defined in the pregluing in Equation (\ref{weighted_metric}). Let $f = (f^0, f^\infty) \in \mathcal{U}$, consider Banach spaces $X = W^{2,p} (\Sigma, \left(f^R\right)^{-1} TN) \times \tilde{V}$ and $Y = L^p (\Sigma, \left(f^R\right)^{-1} TN)$. Let $U$ be an open subset of $X$. Suppose we have the following:
	
	\begin{enumerate}
		\item Consider $0 \in U$, $D_{f^R} \oplus \sigma := d\mathcal{F}_{f^R} (0) \oplus \sigma$ is surjective and has a linear right inverse $Q$ such that $\|Q\| \leq \tilde{c}$ for some constant $\tilde{c}$.
		\item There exists a positive constant $\epsilon$ such that $B_\epsilon (0,X) \subset U$, and
		\begin{equation*}
			\|d\mathcal{F}_{f^R} (\xi) - D_{f^R}\| \leq \frac{1}{2\tilde{c}}
		\end{equation*}
		for all $\|\xi\| < \epsilon$.
		\item There exists some $(\xi_1, \tilde{v}_1) \in X$ that satisfies
		\begin{equation*}
			\|\mathcal{F}_{f^R} (\xi_1) + v_1\| < \frac{\epsilon}{4\tilde{c}},\quad \|(\xi_1, \tilde{v}_{1})\| < \frac{\epsilon}{8},
		\end{equation*}
		where ${v}_{1} = \sigma (\tilde{v}_{1})$.
	\end{enumerate}
	Then there exists a unique $(\xi, \tilde{v}) \in X$ such that
	\begin{align*}
		&\mathcal{F}_{f^R}(\xi) + v = 0,\\
		&(\xi - \xi_1, \tilde{v} - \tilde{v}_{1}) \in \mathrm{im} Q,\\
		&\|(\xi, \tilde{v})\| < \epsilon,
	\end{align*}
	where $v = \sigma (\tilde{v})$.
	
	Moreover, $\|(\xi - \xi_1, \tilde{v} - \tilde{v}_{1})\| \leq 2\tilde{c}\|\mathcal{F}_{f^R}(\xi_1) + v_1 \|$.
\end{theorem}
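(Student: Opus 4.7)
The plan is to recognize this theorem as a direct specialization of Proposition \ref{implicit_general} to the map that combines the harmonic map operator with the linear embedding $\sigma$ of the obstruction space. Specifically, I would define $F \colon U \to Y$ by
\begin{equation*}
F(\xi, \tilde{v}) := \mathcal{F}_{f^R}(\xi) + \sigma(\tilde{v})
\end{equation*}
and then verify that applying Proposition \ref{implicit_general} to $F$ with $x_0 = (0, 0)$ and $x_1 = (\xi_1, \tilde{v}_1)$ gives exactly the conclusion of the theorem.

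The first step is to show that $F$ is continuously differentiable on $U$ with differential at the origin equal to $D_{f^R} \oplus \sigma$. The map $\mathcal{F}_{f^R}$ is $C^1$ from $W^{2,p}$ to $L^p$ because it is a composition of the smooth nonlinear operator built from the Christoffel symbols, exponential map, and parallel transport (see Section \ref{operator_def}), and all of these preserve Sobolev regularity in the desired range. The component $\sigma$ is a fixed bounded linear map by Definition \ref{sigma}, so it equals its own derivative. Taking the product structure on $X$ into account, we get
\begin{equation*}
dF(0, 0)(\eta, \tilde{w}) = d\mathcal{F}_{f^R}(0)\eta + \sigma(\tilde{w}) = D_{f^R}(\eta) + \sigma(\tilde{w}) = (D_{f^R} \oplus \sigma)(\eta, \tilde{w}).
\end{equation*}

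The second step is to translate the three hypotheses of the theorem directly into the hypotheses of Proposition \ref{implicit_general}, using the parameters $\delta := \epsilon$ and $c := \tilde{c}$ from that proposition. Condition (1) gives surjectivity of $dF(0, 0)$ together with a right inverse $Q$ with $\|Q\| \leq \tilde{c}$. For Condition (2), observe that for any $(\xi, \tilde{v}) \in X$ with $\|(\xi, \tilde{v})\| < \epsilon$ we have $\|\xi\| \leq \|(\xi, \tilde{v})\| < \epsilon$, and since $\sigma$ is linear,
\begin{equation*}
\|dF(\xi, \tilde{v}) - dF(0, 0)\| = \|d\mathcal{F}_{f^R}(\xi) - D_{f^R}\| \leq \frac{1}{2\tilde{c}}
\end{equation*}
by hypothesis (2). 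Condition (3) reads $\|F(\xi_1, \tilde{v}_1)\| = \|\mathcal{F}_{f^R}(\xi_1) + v_1\| < \epsilon/(4\tilde{c})$ and $\|(\xi_1, \tilde{v}_1) - (0,0)\| < \epsilon/8$, which is exactly our hypothesis (3). Applying Proposition \ref{implicit_general} then produces a unique $(\xi, \tilde{v}) \in X$ such that $F(\xi, \tilde{v}) = 0$, with $(\xi, \tilde{v}) - (\xi_1, \tilde{v}_1) \in \mathrm{im}\, Q$, $\|(\xi, \tilde{v})\| < \epsilon$, and $\|(\xi, \tilde{v}) - (\xi_1, \tilde{v}_1)\| \leq 2\tilde{c}\|F(\xi_1, \tilde{v}_1)\|$.

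There is no substantial obstacle here: the theorem is essentially a bookkeeping step that packages the approximate solution $(\xi_1, \tilde{v}_1)$, the right inverse constructed in Definition \ref{real_right_inverse}, and the continuity of $d\mathcal{F}_{f^R}$ into a form ready for the implicit function theorem. The only point worth being careful about is that the product norm on $X = W^{2,p}_{f^R} \times \tilde{V}$ used here must be the one from Definition \ref{norm_and_inner_product} (equivalently Definition \ref{weighted_with_V}), so that the operator norm estimates on $Q$ from Proposition \ref{approx_right_inverse} match the hypothesis $\|Q\| \leq \tilde{c}$. Once this identification is made, the proof is a one-line invocation of Proposition \ref{implicit_general}.
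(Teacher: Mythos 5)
Your proof is correct and matches the paper's intended argument: the paper states Theorem \ref{tailored_IFT} as a direct specialization of Proposition \ref{implicit_general} (with no separate proof given), exactly as you carry out by setting $F(\xi,\tilde v) := \mathcal{F}_{f^R}(\xi)+\sigma(\tilde v)$, $x_0=(0,0)$, $x_1=(\xi_1,\tilde v_1)$, $c=\tilde c$, and $\delta=\epsilon$. Your observation that $\sigma$ is linear, hence drops out of the difference of differentials, is precisely the point that makes the translation of hypothesis (2) exact, and the choice of product norm from Definition \ref{norm_and_inner_product} is indeed the one that must be used so the right-inverse bound from Proposition \ref{approx_right_inverse} supplies $\|Q\|\le\tilde c$.
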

Recall the norms are defined in Definition \ref{norm_and_inner_product} and Definition \ref{weighted_with_V}. Similar as the argument in McDuff and Salamon \cite[Section 10.5, p. 387]{mcduff}, we now apply Theorem \ref{tailored_IFT} to obtain the gluing map.

We can choose $(\xi_1, \tilde{v}_1)$ to be $0$, and in this chapter we will show that the conditions in the above theorem are satisfied.

First, let us estimate the norm of $\mathcal{F}_{f^R} (0)$ so that we know what $\epsilon/4\tilde{c}$ should be in the third condition of the theorem.

From (\ref{fR}) we know that $\mathcal{F}_{f^R}(0) = 0$ for $|z| \geq \frac{2}{\delta R}$ and $|z| \leq \frac{\delta}{2R}$.

Recall that for normal coordinates, we have the Taylor expansion of the metric:

\begin{equation*}
	g_{ij}(y^1,\cdots,y^n) = \delta_{ij} - \frac{1}{3} y^k y^l R_{iklj} + O (|y|^3)
\end{equation*}

and $\mathcal{F}$ can be written as:
\begin{equation*}
	\mathcal{F}_{f^R} (0)^k = g^{\alpha\beta} \frac{\p^2 (f^R)^k}{\p x^\alpha \p x^\beta} - g^{\alpha\beta} (\Gamma^M)^\gamma_{\alpha\beta} \frac{\p (f^R)^k}{\p x^\gamma} + g^{\alpha\beta} (\Gamma^N)^k_{ij} (f^R) \frac{\p (f^R)^i}{\p x^\alpha} \frac{\p (f^R)^j}{\p x^\beta}
\end{equation*}
for $k = 1, \cdots, n$.

For $\frac{\delta}{R} \leq |z| \leq \frac{1}{\delta R}$, we know that $f^R$ is constant, so we have
\begin{equation*}
	\mathcal{F}_{f^R} (0) = P(f^R) = 0.
\end{equation*}
For $\frac{1}{\delta R} \leq |z| \leq \frac{2}{\delta R}$, we have:
\begin{align*}
	f^R(z) =& \exp_y (\rho (\delta R z) \zeta^0 (z))\\
	\frac{\p f^R}{\p s} =& d\exp_y (\rho(\delta R z)\zeta^0 (z) ) \cdot \left[ \delta R \frac{\p \rho}{\p s}(\delta R z) \zeta^0 (z) + \rho(\delta Rz)\frac{\p \zeta^0(z)}{\p s}\right]\\
	\frac{\p^2 f^R}{\p s^2} =& d\exp_y (\rho(\delta R z) \zeta^0(z)) \cdot \left[ \delta^2 R^2 \frac{\p^2 \rho}{\p s^2} (\delta R z) \zeta^0(z) +\right.\\
	&\left. 2 \delta R\frac{\p \rho}{\p s}(\delta R z) \frac{\p \zeta^0(z)}{\p s} + \rho(\delta R z) \frac{\p^2 \zeta^0(z)}{\p s^2}\right] +\\
	& d^2 \exp_y (\rho(\delta R z)\zeta^0 (z)) \left[ \delta R \frac{\p \rho}{\p s}(\delta R z) \zeta^0 (z) + \rho(\delta Rz)\frac{\p \zeta^0(z)}{\p s}\right]^2\\
	\mathcal{F}_{f^R} (0)^k =& (1 + |z|^2)^2 \left(\frac{\p^2 (f^R)^k}{\p s^2} + \frac{\p^2 (f^R)^k}{\p t^2}\right) + \\
	& (1 + |z|^2)^2(\Gamma^N)_{ij}^k (f^R) \left(\frac{\p (f^R)^i}{\p s} \frac{\p (f^R)^j}{\p s} + \frac{\p (f^R)^i}{\p t} \frac{\p (f^R)^j}{\p t}\right)\\
	\|\mathcal{F}_{f^R}(0)\| =& \|\mathcal{F}_{f^R}(0)\|_{0,p,R} = \left(\int_{\frac{1}{\delta R} \leq |z| \leq \frac{2}{\delta R}} (1 + |z|^2)^{-2} |\mathcal{F}_{f^R} (0)|^p dsdt\right)^{\frac{1}{p}}
\end{align*}

We have $\delta R > \frac{1}{\delta_0}$. For $\delta_0 = \delta_0 (c, p, N, \mathcal{U})$ small enough, the exponential map on $N$ is a smooth isometry for $|z| \leq \frac{2}{\delta R}$. We have
\begin{align*}
	& 0 < c_1 (c, p, N, \mathcal{U}, \delta_0) \leq \| d\exp_y ( \zeta^0 (z))\| \leq c_2 (c, p, N, \mathcal{U}, \delta_0),\  \forall |z| \leq \frac{2}{\delta R}\\
	& \| d^2 \exp_y ( \zeta^0 (z))\| \leq c_3 (c, p, N, \mathcal{U}, \delta_0),\ \forall |z| \leq \frac{2}{\delta R}\\
	& 0 < c_1 (c, p, N, \mathcal{U}, \delta_0) \leq \| d\exp_y (\rho(\delta Rz) \zeta^0 (z))\| \leq c_2 (c, p, N, \mathcal{U}, \delta_0),\  \forall |z| \leq \frac{2}{\delta R}\\
	& \| d^2 \exp_y (\rho(\delta Rz) \zeta^0 (z))\| \leq c_3 (c, p, N, \mathcal{U}, \delta_0),\ \forall |z| \leq \frac{2}{\delta R}
\end{align*}
Note that we may further assume $c_1$ is decreasing with respect to $\delta_0$ while $c_2$ and $c_3$ are increasing.

Since $f^0 (z) = \exp_y (\zeta^0(z))$, and the exponential map is a smooth isometry for $|z| \leq \frac{2}{\delta R}$, we have $\zeta^0(z) = \exp_y^{-1}(f^0(z))$, and
\begin{align*}
	\| d \exp_y^{-1} (f^0 (z))\| \leq & 1/c_1 (c, p, N, \mathcal{U}, \delta_0),\ \forall |z| \leq \frac{2}{\delta R}\\
	\| d^2 \exp_y^{-1} (f^0 (z))\| \leq & c_4 (c, p, N, \mathcal{U}, \delta_0),\ \forall |z| \leq \frac{2}{\delta R}\\
	\left| \zeta^0 (z) \right| \leq & \frac{c}{c_1 (c, p, N, \mathcal{U}, \delta_0)} |z|,\ \forall |z| \leq \frac{2}{\delta R}\\
	\frac{\p \zeta^0 (z)}{\p s} = & \frac{\p \exp_y^{-1}(f^0(z))}{\p s} = d \exp_y^{-1}(f^0(z))\frac{\p f^0 (z)}{\p s}\\
	\frac{\p^2 \zeta^0 (z)}{\p s^2} = & \frac{\p}{\p s} \left( d \exp_y^{-1} (f^0(z))\frac{\p f^0(z)}{\p s}\right) \\
	=& d^2 \exp^{-1}_y (f^0(z)) \left( \frac{\p f^0(z)}{\p s}\right)^2 + d \exp_y^{-1} (f^0 (z)) \frac{\p^2 f^0 (z)}{\p s^2}
\end{align*}
Similar to the above paragraph, we may assume $c_4$ is increasing with respect to $\delta_0$.

We now start to divide $\mathcal{F}_{f^R}(0)$ into terms and estimate each term separately:

(Note that here $\rho$ is a function defined on $\mathbb{R}^2$ instead of $\mathbb{R}$, which is different from the $\rho$ we started with. Actually it is $\rho$ (that we started with) composed with absolute value function)

\begin{align*}
	\left|\frac{\p^2 f^R}{\p s^2} \right|=& \left|d\exp_y (\rho(\delta R z) \zeta^0(z)) \cdot \left[ \delta^2 R^2 \frac{\p^2 \rho}{\p s^2} (\delta R z) \zeta^0(z) +\right.\right.\\
	&\left.2 \delta R\frac{\p \rho}{\p s}(\delta R z) \frac{\p \zeta^0(z)}{\p s} + \rho(\delta R z) \frac{\p^2 \zeta^0(z)}{\p s^2}\right] +\\
	& \left. d^2 \exp_y (\rho(\delta R z)\zeta^0 (z)) \left[ \delta R \frac{\p \rho}{\p s}(\delta R z) \zeta^0 (z) + \rho(\delta Rz)\frac{\p \zeta^0(z)}{\p s}\right]^2\right|\\
	\leq & C (c, p, N, \mathcal{U}, \delta_0) \left[ \delta R + 1 \right]
\end{align*}
where $C$ is increasing with respect to $\delta_0$.

Things are the same for derivatives with respect to $t$ instead of $s$.

\begin{align*}
	\left| \frac{\p (f^R)}{\p s}\right| = & d\exp_y (\rho(\delta R z)\zeta^0 (z) ) \cdot \left[ \delta R \frac{\p \rho}{\p s}(\delta R z) \zeta^0 (z) + \rho(\delta Rz)\frac{\p \zeta^0(z)}{\p s}\right]\\
	\leq & C (c, p, N, \mathcal{U}, \delta_0)
\end{align*}
where $C$ is increasing with respect to $\delta_0$.

We can estimate from the formula of $\mathcal{F}_{f^R}(0)$ that 
\begin{align*}
	|\mathcal{F}_{f^R} (0)| \leq & C (c, p, N, \mathcal{U}, \delta_0) (\delta R + 1)\\
	\|\mathcal{F}_{f^R} (0)\|_{0,p,R} \bigg\rvert_{\{\frac{1}{\delta R} \leq |z| \leq \frac{2}{\delta R}\}} = & \left(\int_{\frac{1}{\delta R} \leq |z|\leq \frac{2}{\delta R}} (1 + |z|^2)^{-2} |\mathcal{F}_{f^R} (0)|^p \right)^{\frac{1}{p}}\\
	\leq & C (c, p, N, \mathcal{U}, \delta_0) \frac{(\delta R +1)}{(\delta R)^{\frac{2}{p}}}
\end{align*}
where $C (c, p, N, \mathcal{U}, \delta_0)$ here is increasing with respect to $\delta_0$. 

Similar as we did for $\frac{1}{\delta R} \leq |z| \leq \frac{2}{\delta R}$, for $\delta_0 = \delta_0 (c, p, N, \mathcal{U})$ small enough, we have:
\begin{align*}
	& 0 < c_1 (c, p, N, \mathcal{U}, \delta_0) \leq \| d\exp_y (\zeta^\infty (R^2 z))\| \leq c_2 (c, p, N, \mathcal{U}, \delta_0),\ \forall \frac{\delta}{2R} \leq |z| \leq \frac{\delta}{R},\\
	&\| d^2 \exp_y(\zeta^\infty (R^2 z))\| \leq c_3 (c, p, N, \mathcal{U}, \delta_0),\ \forall \frac{\delta}{2R} \leq |z| \leq \frac{\delta}{R},\\
	& 0 < c_1 (c, p, N, \mathcal{U}, \delta_0) \leq \left\| d\exp_y \left(\rho \left(\frac {\delta}{Rz}\right)\zeta^\infty (R^2 z)\right) \right\| \leq c_2 (c, p, N, \mathcal{U}, \delta_0),\ \forall \frac{\delta}{2R} \leq |z| \leq \frac{\delta}{R},\\
	&\| d^2 \exp_y \left( \rho \left( \frac{\delta}{Rz} \right)\zeta^\infty (R^2 z) \right)\| \leq c_3 (c, p, N, \mathcal{U}, \delta_0),\ \forall \frac{\delta}{2R} \leq |z| \leq \frac{\delta}{R},
\end{align*}
where $c_1$ is decreasing with respect to $\delta_0$ and $c_2, c_3$ are increasing.

Since $f^\infty (R^2 z) = \exp_y (\zeta^\infty (R^2 z))$ for $\frac{\delta}{2R} \leq |z| \leq \frac{\delta}{R}$, similar to the case when $\frac{1}{\delta R} \leq |z| \leq \frac{2}{\delta R}$, we have:
\begin{align*}
	\zeta^\infty (R^2 z) =& \exp_y^{-1} (f^\infty(R^2 z))\\
	\| d\exp_y^{-1} (f^\infty(R^2 z) \| \leq & 1/c_1 (c, p, N, \mathcal{U}, \delta_0),\ \forall |z| \geq \frac{\delta}{2R},\\
	\| d^2 exp_y^{-1} (f^\infty (R^2 z))\| \leq & c_4 (c, p, N, \mathcal{U}, \delta_0),\ \forall |z| \geq \frac{\delta}{2R},\\
	|\zeta^\infty (R^2 z)| \leq & \frac{c}{c_1 (c, p, N, \mathcal{U}, \delta_0)}\frac{1}{R^2 |z|}, \ \forall |z| \geq \frac{\delta}{2R},\\
	\left| \frac{\p \zeta^\infty(R^2 z)}{\p s} \right| =& \left| R^2 \frac{\p \zeta^\infty}{\p s} (R^2 z) \right| = \left| R^2 d \exp_y^{-1} (f^\infty(R^2 z)) \frac{\p f^\infty}{\p s} (R^2 z)\right|\\
	\leq & \frac{c}{c_1 (c, p, N, \mathcal{U}, \delta_0)} \frac{R^2}{1 + \frac{\delta^2 R^2}{4}},\ \forall |z| \geq \frac{\delta}{2R}\\
	\left| \frac{\p^2 \zeta^\infty(R^2 z)}{\p s^2}\right| =& \left| R^4 \frac{\p^2 \zeta^\infty}{\p s^2} (R^2 z) \right|\\
	=& \left| R^4 d^2 \exp^{-1}_y (f^\infty (R^2 z))\left(\frac{\p f^\infty}{\p s} (R^2 z)\right)^2 +\right.\\
	&\left. R^4 d\exp^{-1}_y (f^\infty (R^2 z)) \frac{\p^2 f^\infty}{\p s^2} (R^2 z) \right|\\
	\leq & c_4 (c, p, N, \mathcal{U}, \delta_0) (c^2 + c) \frac{R^4}{\left( 1 + \frac{\delta^2 R^2}{4}\right)^2},\ \forall |z| \geq \frac{\delta}{2R}
\end{align*}

We now divide $\mathcal{F}_{f^R} (0)$ into terms and estimate each term, eventually we will get the same estimate as in the case $\frac{1}{\delta R} \leq |z| \leq \frac{2}{\delta R}$.

Now we can take a look at what we should choose to be $\sigma$ and $c$ in the condition of the implicit function theorem. First, from the previous section, we have
\begin{align*}
	\| Q_{f^R} \|_{2,p,R} \leq c_0 (c, p, N, \mathcal{U})
\end{align*}
So we should choose $\tilde{c} = c_0 (c, p, N, \mathcal{U})$.

Since we want
\begin{align*}
	\| \mathcal{F}_{f^R} (0) \|_{0,p,R} < \frac{\epsilon}{4\tilde{c}}
\end{align*}
while we have proved
\begin{align*}
	\| \mathcal{F}_{f^R} (0) \|_{0,p,R} \leq C (c, p, N, \mathcal{U}, \delta_0) \frac{(\delta R + 1)}{(\delta R)^{\frac{2}{p}}} < C (c, p, N, \mathcal{U}, \delta_0) (1 + \delta_0) (\delta R)^{1 - \frac{2}{p}}
\end{align*}
where $C$ is increasing with respect to $\delta_0$.

We first choose $c_0$ and $\delta_0$ as in Proposition \ref{approx_right_inverse}. We know that the results of the Proposition still hold if we make $\delta_0$ smaller. Furthermore, we have seen that $C (c, p, N, \mathcal{U}, \delta_0)$ is increasing with respect to $\delta_0$.

For $\epsilon$, we first require $B_\epsilon (0, X) \subset U$. This upper bound for $\epsilon$ only depends on $c, p, N, \mathcal{U}$.

Then we can consider the $(0, p, R)$-norm of $\mathcal{F}_{f^R} (0)$ being smaller than $\epsilon / \tilde{c}$. By making $\delta_0$ small enough only depending on $c, p, N, \mathcal{U}$, we can make $\sigma$ as small as we like. Now we want to have
\begin{align*}
	\| \xi\| < \epsilon \Rightarrow \| d\mathcal{F}_{f^R} (\xi) - D_{f^R}\| \leq \frac{1}{2\tilde{c}}.
\end{align*}
For this, we apply Lemma \ref{difference}, which is an analog of Proposition 3.5.3 in McDuff and Salamon \cite[Proposition 3.5.3, p. 70]{mcduff}.

Recall the formula for $D_{f^R}$ from Equation (\ref{intrinsic}). For any $\xi \in W^{2, p} (\Sigma_1 \#_{\delta, R} \Sigma_2, (f^R)^{-1} TN)$, let $Z_\alpha = (f^R)_* e_\alpha$ for $\alpha = 1, 2$, where $\{e_1 ,e_2\}$ is a local orthonormal frame of $T(\Sigma_1 \#_{\delta, R} \Sigma_2)$.
\begin{align*}
	D_{f^R} (\xi) = \nabla^* \nabla \xi + \sum_\alpha R^N (Z_\alpha, \xi) Z_\alpha.
\end{align*}

Let us check the conditions of Theorem \ref{tailored_IFT}. The first condition is met in Section \ref{approx_right_inverse}, where we eventually constructed the bounded right inverse $Q_{f^R}$. We have also managed to satisfy the second and thrid conditions in the previous discussions in this section. Thus we can apply the implicit function theorem. In particular, we know there exists a unique $\xi$ such that $\exp_{f^R} (\xi)$ is a harmonic map. Although we are considering $S^2$s, everything can be done the same way for general Riemann surfaces. Combining the above, we arrive at the following theorem:

\begin{reptheorem}{existence_thm}[Existence of the Extended Gluing Map]
	For any $(\tilde{f}_1, \tilde{f}_2) \in \mathcal{M} (c, p)$, there exists a neighborhood $\mathcal{U}$ in $\mathcal{M} (c, p)$ and $\delta_0 = \delta_0 (c, p, \Sigma_1,  \Sigma_2, x_1, x_2, N, \mathcal{U})> 0$, such that for each pair of $(\delta, R) \in \mathcal{A} (\delta_0)$, there exists a gluing map $\imath_{\delta, R}: \mathcal{U} \rightarrow W^{2, p} (\Sigma_1 \#_{\delta, R} \Sigma_2, N) \times \tilde{V}$ such that each element $(\exp_{f^R} \xi, \tilde{v}) \in \imath_{\delta, R} (\mathcal{U})$ satisfies
	\begin{equation}
		\label{extended_harmonic_map_equation}
		\mathcal{F}_{f^R} (\xi) + v = 0
	\end{equation}
	where $\Sigma_1 \#_{\delta, R} \Sigma_2$ is the glued manifold as defined in \ref{weighted_metric}, and $\tilde{V}$ is defined in Definition \ref{V}, and $v = \sigma (\tilde{v})$, where $\sigma$ is defined in Definition \ref{sigma}. Furthermore, for any $\epsilon > 0$, we can choose $\delta_0 = \delta_0 (c, p, \Sigma_1,  \Sigma_2, x_1, x_2, N, \mathcal{U})$ such that, for any $(f_1, f_2) \in \mathcal{U}$, there exists $\xi \in W^{2, p} (\Sigma_1 \#_{\delta, R} \Sigma_2)$ satisfying
	\begin{equation*}
		\imath_{\delta, R} ((f_1, f_2)) = (\exp_{f^R} \xi, \tilde{v}), \quad \|(\xi^R, \tilde{v})\|_{2, p, R, V} < \epsilon
	\end{equation*}
	where $f^R$ denotes the pregluing of $f_1, f_2$ defined in (\ref{fR}), and the norm is defined in Definition \ref{weighted_with_V}.
	
	In particular, consider 
	\begin{equation*}
		\imath_{\delta, R} ((f_1, f_2)) \mid_{\tilde{V}} = 0.
	\end{equation*}
	If there are elements in $\imath_{\delta, R} (\mathcal{U})$ that satisfy the above equation, then these elements form a subset of the image of the gluing map consisting of harmonic maps. Otherwise, there is no harmonic map in the image of the gluing map.
\end{reptheorem}

\appendix
	
	\section{Choice of Cokernel Representatives}
	\label{choice_coker_rep}
	
	First, for $D_{f_i}$, $i = 1, 2$ defined in (\ref{Df}), we know that it is Fredholm from Lemma \ref{fredholm}. We would like to show that we can choose representatives of the quotient space $L^p_{f_i} / \text{Im} D_{f_i}$ such that, all representatives are supported away from an open neighborhood of $x_i$. To simplify notations, we omit the subscripts.
	
	\begin{lemma}
		\label{choice1}
		Consider $f: \Sigma \rightarrow N$ and let $D_f$ be the operator defined in (\ref{D_f}). Choose any $x\in \Sigma$. We can choose representatives $\{v_1, \cdots, v_k\}$ in $L^p_{f}$ that span the quotient space $L^p_{f} / \text{Im} D_{f}$, such that, $v_i$ is supported away from an open neighborhood of $x$ for all $i$.
	\end{lemma}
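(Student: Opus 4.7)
The plan is a straightforward perturbation-by-cutoff argument, exploiting finite dimensionality of the cokernel.

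First I would fix any set of representatives $\{w_1,\dots,w_k\} \subset L^p_f$ whose images under the quotient projection $\pi : L^p_f \to L^p_f/\mathrm{Im}\,D_f$ form a basis of the cokernel; such a set exists because $D_f$ is Fredholm (Lemma \ref{fredholm}), so the quotient is a finite-dimensional Banach space of some dimension $k$.

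Next, choose a family of smooth cutoff functions $\chi_r : \Sigma \to [0,1]$ with $\chi_r \equiv 1$ on $B_{r/2}(x)$ and $\chi_r \equiv 0$ outside $B_r(x)$, where $B_r(x)$ is a geodesic ball of radius $r$ around $x$ (taken smaller than the injectivity radius of $\Sigma$). Define the candidate representatives
\begin{equation*}
    v_i^r := (1 - \chi_r)\, w_i, \qquad i = 1,\dots,k.
\end{equation*}
By construction each $v_i^r$ is supported in $\Sigma \setminus B_{r/2}(x)$, which is the support condition we want. The content of the lemma is then that for sufficiently small $r$, the images $\pi(v_1^r),\dots,\pi(v_k^r)$ still span the cokernel.

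To verify this, I would estimate
\begin{equation*}
    \|v_i^r - w_i\|_{L^p_f} = \|\chi_r w_i\|_{L^p_f} = \Bigl(\int_{B_r(x)} |\chi_r w_i|^p\, dv_g\Bigr)^{1/p} \le \Bigl(\int_{B_r(x)} |w_i|^p\, dv_g\Bigr)^{1/p},
\end{equation*}
which tends to $0$ as $r \to 0$ by absolute continuity of the integral (equivalently, dominated convergence applied to $\chi_r w_i$, which is pointwise dominated by $|w_i| \in L^p$ and converges to $0$ a.e.). Since $\pi$ is continuous, $\pi(v_i^r) \to \pi(w_i)$ in the finite-dimensional quotient. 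Linear independence of a $k$-tuple in a $k$-dimensional Banach space is an open condition (the determinant in any chosen basis is continuous), so for all sufficiently small $r$ the set $\{\pi(v_1^r),\dots,\pi(v_k^r)\}$ remains a basis of $L^p_f/\mathrm{Im}\,D_f$. Fixing such an $r$ yields the desired representatives.

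There is no real obstacle; the only point requiring care is the convergence $\|\chi_r w_i\|_{L^p} \to 0$, which is genuinely a statement about $L^p$ rather than $L^\infty$ (pointwise the $w_i$ might be badly behaved at $x$), and is handled by dominated convergence. All subsequent steps are automatic from the finite-dimensionality of the cokernel.
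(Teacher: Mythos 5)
Your proof is correct, and it takes a cleaner route than the paper's. You pick any basis of representatives, multiply each by $1 - \chi_r$ to kill it near $x$, observe that $\|\chi_r w_i\|_{L^p} \to 0$ as $r \to 0$ by dominated convergence, and conclude via continuity of the quotient projection $\pi$ together with openness of the ``is a basis'' condition in the $k$-dimensional cokernel. This is a one-shot perturbation argument that treats all $k$ representatives simultaneously. The paper instead runs an induction on $\dim \operatorname{coker} D_f$: the base case (dimension one) is a contradiction argument showing that if no representative could be made to vanish near $x$ at any scale $r$, then every $\eta$ supported away from $B_r(x)$ would already lie in $\operatorname{Im} D_f$, and since such $\eta$ are dense as $r \to 0$ while $\operatorname{Im} D_f$ is closed, $D_f$ would be surjective; the inductive step augments $D$ by the identity on the span of one chosen representative ($D \oplus \sigma$) to drop the cokernel dimension. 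Both arguments use only Fredholmness (closed image, finite cokernel), but yours avoids the induction and the density/closedness interplay entirely; it is shorter and, in my view, more transparent. The only step in your write-up that genuinely needs the $L^p$ structure (as opposed to being purely formal) is the convergence $\|\chi_r w_i\|_{L^p} \to 0$, which you correctly justify by dominated convergence rather than by any pointwise control near $x$.
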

	\begin{proof}
		We claim that the conclusion of the lemma holds for any operator such that the target space is $L^p_{f}$, and the cokernel is finite dimensional, and the image is closed. We use mathematical induction to prove this.
		
		Suppose that the cokernel has dimension 0, there is nothing to prove.
		
		Suppose that the cokernel has dimension 1, we prove it by contradiction. Assume that for any $r > 0$, no representative of the quotient space vanishes on $B_r (x)$. Then we know that there exists $v$ as a representative of the quotient space that is only supported on $B_r (x)$. To see this, first choose any representative of the quotient space, which we denote by $w$. If $w$ is supported on $B_r (x)$, then let $v = w$. Otherwise, let $v$ be $w$ restricted to $B_r (x)$, that is, $v := w\mid_{B_r (x)}$. Since $w - v$ is supported away from $B_r (x)$, we know, by assumption, that $w - v$ must be in the operator's image. Thus, $v = w - (w - v)$ will be a representative of the quotient space supported on $B_r (x)$. Now take any $\eta$ supported outside $B_r (x)$, since no representative of the quotient space vanishes on $B_r (x)$, $\eta$ must be in the image. We can shrink $r$, and such $\eta$s can converge to any element in $L^p_f$. Since the image of the operator is closed, this means that the operator is surjective. Contradiction!
		
		Now suppose that the conclusion is true for cokernel dimension less than or equal to $k$ where $k \geq 1$. We prove the conclusion for cokernel dimension $k + 1$. First choose any representative $v$. $v$ may not be supported away from a neighborhood of $x$. Let $\sigma$ be the identity map of the space spanned by $v$. Write the original operator as $D$, then we know that the image of $D\oplus \sigma$ is closed (note that it is easy to show that a closed subspace direct sum with a one-dimensional space is closed). Now we can choose $v_1$ that is a representative of the cokernel of $D \oplus \sigma$ and supported away from a neighborhood of $x$. Let $\sigma_1$ be the identity map of the space spanned by $v_1$. Now consider $D\oplus \sigma_1$ and we can find other representatives that are supported away from a neighborhood of $x$.
	\end{proof}
	
	Now we prove that $D_{1, 2}$ defined in Definition \ref{spaces_and_operator} is Fredholm. Since $D_{f_1}$ and $D_{f_2}$ are Fredholm, it is obvious that $D_{1, 2}$ has finite-dimensional kernel. Thus we only need to prove that $D_{1, 2}$ has a finite-dimensional cokernel. The fact that $D_{1, 2}$ has closed range will follow from the fact that $D_{1, 2}$ has finite-dimensional kernel and finite-dimensional cokernel (refer to Abramovich and Aliprantis \cite[Section 2.1, Corollary 2.17, p. 76]{Abramovich2002}).
	\begin{lemma}
		\label{D12fredholm}
		$D_{1, 2}$ defined in Definition \ref{spaces_and_operator} is Fredholm.
	\end{lemma}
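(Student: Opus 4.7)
The plan is to realize $D_{1,2}$ as the restriction of the Fredholm operator $T := D_{f_1}\oplus D_{f_2}: W^{2,p}_{f_1}\times W^{2,p}_{f_2} \to L^p_{f_1}\times L^p_{f_2}$ to a closed subspace of finite codimension, and then invoke the general fact that such a restriction is still Fredholm. The only nontrivial preliminary step is to check that $W^{2,p}_{f_{1,2}}$ really is a closed finite-codimension subspace of $W^{2,p}_{f_1}\times W^{2,p}_{f_2}$.

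For that step, note that since $\dim\Sigma_i = 2$ and $1 < p < 2$, the Sobolev embedding $W^{2,p}\hookrightarrow C^{0,\gamma}$ with $\gamma = 2-2/p$ (already invoked in the excerpt) makes the evaluation maps $\xi_i \mapsto \xi_i(x_i)$ bounded linear maps $W^{2,p}_{f_i} \to T_y N$. Therefore the map
\begin{equation*}
\mathrm{ev}: W^{2,p}_{f_1}\times W^{2,p}_{f_2} \longrightarrow T_y N, \qquad (\xi_1,\xi_2)\mapsto \xi_1(x_1)-\xi_2(x_2)
\end{equation*}
is bounded and linear into a finite-dimensional space, and by definition $W^{2,p}_{f_{1,2}} = \ker\mathrm{ev}$. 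Hence $W^{2,p}_{f_{1,2}}$ is closed and has codimension at most $\dim N$ in $W^{2,p}_{f_1}\times W^{2,p}_{f_2}$.

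Next I would record the abstract fact that if $T:X\to Y$ is Fredholm and $X'\subset X$ is a closed subspace of finite codimension, then $T|_{X'}$ is Fredholm. The kernel part is immediate: $\ker(T|_{X'}) = \ker T \cap X'$ is a subspace of the finite-dimensional space $\ker T$. For the cokernel, one observes that the natural surjection $X/X' \twoheadrightarrow T(X)/T(X')$ gives $\dim T(X)/T(X') \leq \dim X/X' < \infty$, and combined with $\dim Y/T(X) < \infty$ one gets $\dim Y/T(X')<\infty$. Applied to $T = D_{f_1}\oplus D_{f_2}$ (Fredholm by Lemma \ref{fredholm} on each factor) and $X' = W^{2,p}_{f_{1,2}}$, this yields finite-dimensional kernel and cokernel for $D_{1,2}$.

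Finally, closed range follows from finite-dimensional kernel together with finite-dimensional cokernel by the reference to Abramovich and Aliprantis already cited in the excerpt. The hardest part is essentially bookkeeping for the cokernel codimension estimate; everything else is a direct application of Sobolev embedding and the Fredholm property of the individual operators $D_{f_i}$.
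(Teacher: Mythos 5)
Your proof is correct, and it captures the same essential mechanism as the paper's: the constraint $\xi_1(x_1)=\xi_2(x_2)$ cuts out a closed subspace of codimension at most $\dim T_y N$ in $W^{2,p}_{f_1}\times W^{2,p}_{f_2}$ (via the Sobolev embedding $W^{2,p}\hookrightarrow C^0$), so restricting the Fredholm operator $D_{f_1}\oplus D_{f_2}$ to it changes the image by at most finite dimension. Your packaging --- realizing $W^{2,p}_{f_{1,2}}$ as the kernel of the bounded evaluation map $\mathrm{ev}$ and then invoking the abstract fact that restricting a Fredholm operator to a closed finite-codimension subspace stays Fredholm --- is cleaner than the paper's contradiction argument, which phrases the codimension count in terms of $\dim T_y N + 1$ cokernel representatives $v_i$ and then ``evaluates'' $v_i(x_1)-v_i(x_2)$; as written that step is a slip, since cokernel representatives live in $L^p$ and have no pointwise values (the intended quantities are the evaluations of the $W^{2,p}$ preimages of the $v_i$ under $D_{f_1}\oplus D_{f_2}$, which is exactly your $\mathrm{ev}$). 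So the content is the same, but your argument is the more transparent one.
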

	\begin{proof}
		We only need to show that the cokernel is finite-dimensional.
		
		First, choose representatives $\{v_{i, 1}, \cdots, v_{i, k_i} \}$ in the quotient space $L^p_{f_i} / \text{Im} D_{f_i}$. Let $\sigma_i$ be the identity map of the finite-dimensional space spanned by these representatives. Consider $D_{1, 2} \oplus \sigma_1 \oplus \sigma_2$. Suppose the cokernel is not finite-dimensional, we can find $\{v_1, \cdots, v_n\}$, where $n = \dim T_y N + 1$ (recall $y := f_1 (x_1) = f_2 (x_2)$), such that each $v_i$ is a representative of the quotient map $(L^p_{f_1} \times \L^p_{f_2}) / \text{Im} (D_{1, 2} \oplus \sigma_1 \oplus \sigma_2$, and we can let them be linearly independent in the quotient space. Consider $v_i (x_1) - v_i (x_2)$, since there are $\dim T_y N + 1$ elements, we know there is some nontrivial linear combination of the elements, which we denote by $\tilde{v}$, such that $\tilde{v} (x_1) - \tilde{v} (x_2) = 0$. However, that means $\tilde{v}$ is in the image of $D_{1, 2} \oplus \sigma_1 \oplus \sigma_2$. Contradiction!
	\end{proof}
	
	From the proof of Lemma \ref{choice1}, we know we are only using certain properties of the operators. Thus, it is easy to use the same proof to get the following lemma.
	\begin{lemma}
		\label{choice2}
		We can choose representatives $\{v_1, \cdots, v_k\}$ in $L^p_{f_1} \times L^p_{f_2}$ that span the quotient space $(L^p_{f_1} \times L^p_{f_2}) / \text{Im} D_{1, 2}$, such that, $v_i$ is supported away from some open neighborhoods of $x_1$ and $x_2$.
	\end{lemma}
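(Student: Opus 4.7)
The plan is to mimic the proof of Lemma \ref{choice1}, replacing the single-operator/single-point setup with the product operator $D_{1,2}\colon W^{2,p}_{f_{1,2}} \to L^p_{f_1} \times L^p_{f_2}$ and the two-point set $\{x_1, x_2\}$, where $x_i \in \Sigma_i$. The key inputs, already established in Lemma \ref{D12fredholm}, are that $\text{Im}(D_{1,2})$ is closed and $\dim \text{coker}(D_{1,2}) =: k < \infty$. As in the proof of Lemma \ref{choice1}, I will prove by induction on $k$ the more general statement: for any bounded linear operator $T$ with target $L^p_{f_1} \times L^p_{f_2}$, closed image, and $k$-dimensional cokernel, one can choose $k$ cokernel representatives each supported away from some open neighborhoods of $x_1 \in \Sigma_1$ and $x_2 \in \Sigma_2$.

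For the base case $k=1$, I argue by contradiction exactly as in Lemma \ref{choice1}. Assume no neighborhood $U_1 \ni x_1$, $U_2 \ni x_2$ works; that is, for every $r > 0$, no nonzero representative vanishes on $B_r(x_1) \cup B_r(x_2) \subset \Sigma_1 \sqcup \Sigma_2$. Then the restriction trick from Lemma \ref{choice1} applies verbatim: any representative $w$ can be decomposed into its restrictions to $B_r(x_1) \cup B_r(x_2)$ and to the complement, and the hypothesis forces the complementary piece to lie in $\text{Im}(T)$, so that every $\eta \in L^p_{f_1} \times L^p_{f_2}$ supported in the complement of $B_r(x_1) \cup B_r(x_2)$ lies in $\text{Im}(T)$. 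Since $\{x_1\} \cup \{x_2\}$ has measure zero, such $\eta$'s are $L^p$-dense as $r \to 0$, and closedness of $\text{Im}(T)$ forces $T$ to be surjective, contradicting $k=1$.

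For the inductive step from $k$ to $k+1$, I follow the scheme of Lemma \ref{choice1}. Given $T$ with $\dim \text{coker}(T) = k+1$, pick any (possibly ill-supported) representative $v$ and form $T \oplus \sigma$, with $\sigma$ the identity on $\text{span}(v)$. A closed subspace plus a one-dimensional complement is closed, so $\text{Im}(T \oplus \sigma)$ is closed, and $\dim \text{coker}(T \oplus \sigma) = k$. The inductive hypothesis produces well-supported $v_1, \ldots, v_k$ representing a basis of $\text{coker}(T \oplus \sigma)$. Letting $\sigma_{1:k}$ be the identity on $\text{span}(v_1,\ldots,v_k)$, the operator $T \oplus \sigma_{1:k}$ has cokernel of dimension exactly one (since $v_1, \ldots, v_k$ together with $v$ span $\text{coker}(T)$, and $v_1, \ldots, v_k$ are linearly independent modulo $\text{Im}(T)$), so the base case supplies a well-supported $v_{k+1}$. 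Then $v_1, \ldots, v_{k+1}$ is the desired basis of $\text{coker}(T)$, each supported away from neighborhoods of $x_1$ and $x_2$.

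The main obstacle is rigorously justifying the density step in the base case: I must verify that elements of $L^p_{f_1} \times L^p_{f_2}$ supported in the complement of a shrinking pair of geodesic balls around $x_1, x_2$ are $L^p$-dense. This is routine via dominated convergence since points have measure zero and the truncations $\eta \cdot \mathbf{1}_{\Sigma_1 \sqcup \Sigma_2 \setminus (B_r(x_1) \cup B_r(x_2))}$ converge to $\eta$ in $L^p$ as $r \to 0$. Combined with the closed-range property provided by Lemma \ref{D12fredholm}, this yields the contradiction needed to close the base case, and the lemma then follows by induction.
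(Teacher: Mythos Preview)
Your proposal is correct and follows essentially the same approach as the paper, which simply remarks that the proof of Lemma \ref{choice1} uses only that the target is an $L^p$ space, the image is closed, and the cokernel is finite-dimensional, so it carries over verbatim to $D_{1,2}$ with the pair $\{x_1,x_2\}$ in place of the single point. Your write-up is in fact more detailed than the paper's, spelling out the two-point density step and the inductive bookkeeping explicitly.
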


	\section{Uniform Boundedness of Coordinate Change}
	\label{uniform_bound_proof}
	
	Consider a smooth Riemannian manifold $N$ and a point $p_0 \in N$. By the uniformly normal neighborhood lemma, there exists a neighborhood $U$ containing $p_0$ and $\delta > 0$ such that
	\begin{enumerate}
		\item For all $p_1, p_2 \in U$, there exists a unique geodesic $\gamma$ of length less than $\delta$ joining $p_1$ to $p_2$. Moreover, $\gamma$ is minimizing.
		\item For any $p \in U$, $U \subset \exp_p (B_{p} (\delta))$, and $\exp_p$ is a diffeomorphism on $B_p (\delta)$.
	\end{enumerate}
	
	For such a uniformly normal neighborhood $U$ of $p_0$ and any point $p_1 \in U$, there exists a unique smooth geodesic $\gamma$ such that $\gamma(0) = p_0$ and $\gamma(1) = p_1$.
	
	Consider a fixed normal coordinate chart centered at $p_0$. Denote the corresponding coordinates of $p_1$ by $y_1$. Note that $y_1$ is equal to the coordinates of $\dot{\gamma} (0)$. Consider a vector $V \in T_{p_0} N$ and let $V(t)$ for $t \in [0, 1]$ be the parallel transport along $\gamma$. We have the following system of ODEs:
	\begin{align*}
		&\ddot{\gamma}^k (t) + \Gamma^k_{ij} (\gamma(t)) \dot{\gamma}^i (t) \dot{\gamma}^j (t) = 0,\\
		&\dot{V}^k (t) + \Gamma^k_{ij} (\gamma (t)) V^i (t) V^j (t) = 0.
	\end{align*}
	By setting $X(t) := \dot{\gamma} (t)$, we can transform the above system into a system of first order ODEs. Thus we know $V(1)$ will be a smooth function of $V(0)$, $\gamma(0)$, and $\dot{\gamma} (0)$. Here, $\gamma(0) \equiv p_0$. Denote the parallel transport by
	\begin{equation}
		\label{parallel_transport}
		PT: T_{p_0} N \times T_{p_0} N \rightarrow T_p N
	\end{equation}
	In coordinates (normal coordinates at $p_0$ we fixed above) we have:
	\begin{alignat*}{2}
		PT:  &\mathbb{R}^n \times \mathbb{R}^n &&\rightarrow \mathbb{R}^n\\
		&(V(0), \dot{\gamma}(0)) &&\mapsto V(1)
	\end{alignat*}
	Equivalently, we can write $PT$ as
	\begin{alignat*}{2}
		PT:  &\mathbb{R}^n &&\rightarrow \mathbb{R}^n \times \mathbb{R}^n\\
		&\dot{\gamma}(0) &&\mapsto \text{matrix representing the parallel transport}
	\end{alignat*}
	Note that for the matrix, the $k$-th column is simply $PT (e_k, \dot{\gamma})$ written in coordinates, so the matrix is a smooth function of $\dot{\gamma}(0)$. Also note that $PT (0)$ is the identity matrix. We can write $PT (w) = id + M(w)$ where $M$ is some matrix whose norm goes to $0$ uniformly as $|w|$ goes to $0$. Similarly, we can control $PT^{-1}$.

	\section{Perturbations in the Pregluing}
	
	First, let us identify the Sobolev spaces regarding the perturbed map with the space regarding the original map. Namely, we identify $W^{2, p} (M, (f^0)^{-1} TN)$\\ (resp. $L^p (M, (f^0)^{-1} TN)$) with $W^{2, p} (M, (f^{0, r})^{-1} TN)$ (resp. $L^p (M, (f^{0, r})^{-1} TN)$). For simplicity, we only write out the case for $f^0$. The case for $f^\infty$ is completely the same.
	
	\begin{proposition}[Equivalence of Sobolev Spaces under Perturbations]
		\label{space_equivalence}
		For any\\ $(\tilde{f}^0, \tilde{f}^\infty) \in \mathcal{M} (c, p)$, where $\mathcal{M} (c, p)$ is defined as in Definition \ref{moduli_defn_nonsurj}, we can choose a neighborhood $\mathcal{U}$ in $\mathcal{M} (c, p)$ and $\delta_0 = \delta_0 (c, p, N, \mathcal{U})$ such that, for any $(f^0, f^\infty) \in \mathcal{U}$ and $(\delta, R) \in \mathcal{A} (\delta_0)$, we have:
		
		(\romannum{1}) $L^p (M, (f^0)^{-1} TN) \cong L^p (M, (f^{0, r})^{-1} TN)$.
		
		(\romannum{2}) $W^{2, p} (M, (f^0)^{-1} TN) \cong W^{2, p} (M, (f^{0, r})^{-1} TN)$.
		
		where the constants in the equivalence relation only depend on $c, p, N, \mathcal{U}$.
	\end{proposition}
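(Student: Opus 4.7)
The plan is to construct an explicit bundle isomorphism between $(f^0)^{-1} TN$ and $(f^{0,r})^{-1} TN$ via parallel transport, and then use the pointwise bounds on parallel transport (from Appendix \ref{uniform_bound_proof}) together with the $C^2$ bounds coming from $\mathcal{M}(c, p)$ to conclude that this isomorphism preserves both the $L^p$ and $W^{2,p}$ norms up to multiplicative constants depending only on $c, p, N, \mathcal{U}$.

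More precisely, first observe from (\ref{f0r}) that $f^{0,r}(z) = f^0(z)$ for $|z| \geq 2/(\delta R)$, while for $|z| \leq 2/(\delta R)$ the bound $\|df^0\|_{L^\infty} \leq c$ gives $d_N(f^0(z), y) \leq 2c/(\delta R) \leq 2c\delta_0$, and $d_N(f^{0,r}(z), y) \leq 2c\delta_0$ by the same reasoning applied to the exponential formula defining $f^R$ on the transition annulus. Shrinking $\delta_0$ depending on $c, N, \mathcal{U}$, both points lie inside a common uniformly normal neighborhood of $y$, so there is a unique minimizing geodesic from $f^0(z)$ to $f^{0,r}(z)$. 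Define
\begin{equation*}
\Psi(z) := P\bigl(f^0(z), f^{0,r}(z)\bigr) : T_{f^0(z)} N \longrightarrow T_{f^{0,r}(z)} N
\end{equation*}
to be parallel transport along that geodesic, with the convention that $\Psi(z)$ is the identity for $|z| \geq 2/(\delta R)$. This is smooth in $z$ because $f^0$ and $f^{0,r}$ agree at $|z| = 2/(\delta R)$, and $\Psi$ induces a map $\xi \mapsto \Psi \xi$ between sections.

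The $L^p$ estimate (part (i)) is immediate: parallel transport is a fiberwise isometry, so $|(\Psi\xi)(z)| = |\xi(z)|$ pointwise, and hence $\|\Psi\xi\|_{L^p} = \|\xi\|_{L^p}$. The same argument applied to the reverse parallel transport $\Psi^{-1}$ gives the other direction.

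For the $W^{2, p}$ estimate (part (ii)), the chain rule gives, schematically,
\begin{align*}
\nabla(\Psi\xi) &= (d_1 P \cdot df^0 + d_2 P \cdot df^{0,r})\,\xi + P\cdot \nabla \xi, \\
\nabla^2(\Psi\xi) &= (\text{quadratic in } df^0, df^{0,r}, \text{ and first derivatives of } P)\cdot \xi \\
&\quad + (\text{terms in } d^2 f^0, d^2 f^{0,r}, \text{ and } dP)\cdot \xi + (\text{linear in } dP, df^0, df^{0,r})\cdot \nabla\xi + P\cdot \nabla^2 \xi.
\end{align*}
The key point is that $dP$ and $d^2P$, viewed as functions of the two endpoints, are smooth on the common uniformly normal neighborhood and therefore bounded by a constant depending only on $N$ and $\mathcal{U}$. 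The $C^2$ bounds $\|df^0\|_{L^\infty}, \|d^2 f^0\|_{L^\infty} \leq c$ come from $\mathcal{M}(c, p)$, while analogous uniform bounds on $df^{0,r}$ and $d^2 f^{0,r}$ follow from its explicit definition: on the transition annulus $1/(\delta R) \leq |z| \leq 2/(\delta R)$ one differentiates $\exp_y(\rho(\delta R z) \zeta^0(z))$, and the scalings $|\delta R \rho'| \cdot |\zeta^0| = O(1)$ and $|\delta^2 R^2 \rho''|\cdot |\zeta^0| = O(\delta R)$ combine with the pointwise bounds on $\zeta^0$ from Appendix \ref{uniform_bound_proof} to give constants depending only on $c, p, N, \mathcal{U}, \delta_0$. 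Integrating these pointwise bounds and invoking the opposite direction with $\Psi^{-1}$ yields the two-sided equivalence of $W^{2,p}$ norms.

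The main obstacle I anticipate is a bookkeeping one: the derivative estimates on $f^{0,r}$ depend on $\delta_0$ through the cutoff factors $\delta R$ appearing when we differentiate $\rho(\delta R z)$, and one has to verify carefully that after pairing with the corresponding weights in the $W^{2,p}$ norm, all such $\delta R$ factors either cancel or remain bounded uniformly in $(\delta, R) \in \mathcal{A}(\delta_0)$. Once this is organized, parts (i) and (ii) follow by applying the resulting pointwise bounds inside the respective integrals and taking $p$-th roots.
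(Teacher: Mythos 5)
Your strategy — a bundle isomorphism by parallel transport along the radial geodesic through $y$, followed by differentiation and pointwise estimates — is the same as the paper's (the paper routes through $T_y N$ and the decomposition $PT = \mathrm{id} + S$ from Appendix \ref{uniform_bound_proof}, but this is the same geodesic and an equivalent bookkeeping device). Your $L^p$ step is clean: parallel transport is a fiberwise isometry, so the $L^p$ norm is literally preserved.

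The gap is in the $W^{2,p}$ step, and you in fact flag it yourself without closing it. You correctly compute that $|\nabla^2 f^{0,r}| = O(\delta R)$ on the transition annulus $1/(\delta R) \leq |z| \leq 2/(\delta R)$, but then write that this yields ``constants depending only on $c, p, N, \mathcal{U}, \delta_0$'' and defer the issue to ``pairing with the corresponding weights in the $W^{2,p}$ norm.'' Neither statement is right: $\delta R$ ranges over $(1/\delta_0, \infty)$ and is not a constant controlled by $\delta_0$, and the norm in this proposition is the \emph{unweighted} $W^{2,p}$ norm, so there are no weights to pair with. The actual mechanism, used implicitly in the paper's proof, has two ingredients that your write-up does not supply. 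First, the $O(\delta R)$ term in $\nabla^2 \Psi$ lands on the zeroth-order factor $\xi$, and one controls $\|\xi\|_{L^\infty} \le C\|\xi\|_{W^{2,p}}$ via the Sobolev embedding $W^{2,p} \hookrightarrow C^0$ in dimension $2$ (valid since $p>1$). Second, the bad term is supported on an annulus of area $\sim (\delta R)^{-2}$, so its $L^p$ norm carries a factor $(\delta R)^{-2/p}$; combined with the pointwise $O(\delta R)$ this gives $\|\nabla^2 \Psi\|_{L^p} \lesssim (\delta R)^{1 - 2/p}$, which is uniformly bounded for $(\delta, R) \in \mathcal{A}(\delta_0)$ precisely because $1 < p < 2$. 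Without both the embedding and the small-support decay there is no way to absorb the $\delta R$, and the asserted uniformity of the equivalence constants is unproven. (The same two ingredients are needed, in a milder form, for the $\nabla \Psi \cdot \nabla \xi$ cross term, where the paper trades a derivative for an $L^{p^*}$ bound via embedding and Hölder on the small annulus.)
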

	
	\begin{proof}
		Recall $y = f^0 (0) = f^\infty (\infty)$. For $|z| \geq \frac{2}{\delta R}$, we have $f^0 = f^{0, r}$. For $|z| \leq \frac{2}{\delta R}$, $f^0 (z) = \exp_y \zeta^0 (z)$ whereas $f^{0, r} (z) = \exp_y (\rho(\delta Rz) \zeta^0 (z))$. For any $\xi \in W^{2, p} (M, (f^0)^{-1} TN)$, for $|z| \leq \frac{2}{\delta R}$, we can use parallel transport from $\exp_y \zeta^0(z)$ to $\exp_y (\rho(\delta Rz) \zeta^0 (z))$ to get $\tilde{\xi}$ (For $|z| \geq \frac{2}{\delta R}$ we can just take $\tilde{\xi} = \xi$). In other words:
		\begin{equation*}
			\tilde{\xi} (z) := P_\gamma \xi(z)
		\end{equation*}
		where $P_\gamma$ is parallel transport along $\gamma$, $\gamma: [0, 1] \rightarrow N$ is a geodesic from $\gamma(0) = f^0 (z) = \exp_y \zeta^0(z)$ to $\gamma(1) = f^{0, r} (z) = \exp_y (\rho(\delta Rz)\zeta^0(z))$. Since $\|df^0\|$ is bounded by $c$, for small enough $\delta_0$, there always exists such a unique geodesic $\gamma$, whose image is a subset of $\exp_y (t\zeta^0(z))$.
		
		Now we have defined the map, we have to show that the map will induce the isomorphisms. First we have to show that this map maps $L^p$ (resp. $W^{2,p}$) to $L^p$ (resp. $W^{2,p}$) functions, then we have to show that after identifying the two spaces, the two norms are equivalent. We know the map is identity if we only consider $|z| \geq \frac{2}{\delta R}$. Recall that for the norms, we consider a finite cover of $M$ such that on each set of the cover there is a coordinate chart for $M$ and the image of that set has a coordinate chart on $N$. We can choose a cover such that there is a coordinate chart in the cover that contains $0 \in S^2$. For small enough $\delta_0$, this chart will contain all $|z| \leq \frac{4}{\delta R}$. We can choose the cover such that all other charts in this cover will only contain $|z| \geq \frac{2}{\delta R}$. Then we only have to consider the chart containing $0$. We also make the $\delta_0$ small enough so that, for this chart, we can choose the coordinates on $N$ to be the normal coordinates at $y$.
		
		Now we can only consider the chart centered at $y$. First, we identify the $L^p$ and $W^{2, p}$ spaces (both for $f^0$ and $f^{0, r}$) with $L^p (M, T_y N)$ and $W^{2, p} (M, T_y N)$. We construct the map by parallel transport along the geodesic ending at $y$. Recall the parallel transport map $PT$ defined in \ref{parallel_transport}. Since we can write $PT(w) = id + S(w)$ where the norm of $S$ goes to $0$ as $w\rightarrow 0$, we have
		\begin{equation*}
			L^p (M, (f^0)^{-1} TN) \cong L^p (M, T_y N) \cong L^p (M, (f^{0, r})^{-1} TN)
		\end{equation*}
		for $\delta_0$ small enough.
		
		Next, we consider the case for $W^{2, p}$. We can consider a fixed cutoff function on this chart such that the function is equal to $1$ for all $|z| \leq \frac{2}{\delta R}$ for $\delta_0$ small enough. The section multiplied by this cutoff function together with the terms on other charts will be an equivalent norm. This way, we can use approximation by compactly supported smooth sections for $W^{2, p}$. For a smooth section with compact support $\phi$,
		\begin{align*}
			\frac{\p}{\p x^\alpha} PT (\phi) = \frac{\p}{\p x^\alpha} \phi + \frac{\p}{\p x^\alpha} (S\phi) = PT \left(\frac{\p \phi}{\p x^\alpha}\right) + \frac{\p S}{\p x^\alpha} \phi
		\end{align*}
		Note that $\frac{\p S}{\p x^\alpha} = \frac{\p S}{\p y^i}\frac{\p f^i}{\p x^\alpha}$, where $f$ can be $f^0$ or $f^{0, r}$. we want to show that the norm of $\frac{\p S}{\p x^\alpha}$ is bounded by some constant that does not depend on $\delta$ or $R$. This is obviously true for $f^0$. For $f^{0, r} (z) = \exp_y (\rho(\delta Rz) \zeta^0(z))$, we want to show the norm of $\frac{\p f^{0, r}}{\p x^\alpha}$ is bounded:
		\begin{align*}
			\left| \frac{\p S}{\p x^\alpha} \right| &= \left| \frac{\p S}{\p y^i} \frac{\p f^i}{\p x^\alpha} \right| \\
			&= \left| \frac{\p S}{\p y^i} \frac{\p \exp_y^i (\rho(\delta Rz) \zeta^0 (z))}{\p x^\alpha} \right| \\
			&\leq \left| \frac{\p S}{\p y^i}\right| \left| d\exp_y (\rho(\delta Rz) \zeta^0 (z)) \right| \left| \delta R \frac{\p \rho}{\p x^\alpha} (\delta Rz) \zeta^0(z) + \rho(\delta Rz) \frac{\p \zeta^0(z)}{\p x^\alpha}\right|
		\end{align*}
		By definition of $\zeta^0$, we have
		\begin{align*}
			f^0(z) = \exp_y (\zeta^0(z))
		\end{align*}
		Since the $C^0$ norm of $\zeta^0 (z)$ is bounded by  some constant times $1/\delta R$, we know we can control the $C^0$ norm of $\frac{\p S}{\p x^\alpha}$.
		
		For the second order derivatives, we have
		\begin{align*}
			&\frac{\p^2 }{\p x^\alpha \p x^\beta} PT (\phi) = PT \left(\frac{\p^2 }{\p x^\alpha \p x^\beta} \phi \right) + \left(\frac{\p^2 }{\p x^\alpha \p x^\beta} S\right) \phi + \frac{\p}{\p x^\alpha} S \frac{\p}{\p x^\beta} \phi +  \frac{\p}{\p x^\beta} S \frac{\p}{\p x^\alpha} \phi
		\end{align*}
		We only need to show that $\|\frac{\p^2 M}{\p x^\alpha \p x^\beta} \phi\|_{L^p} \leq C \|\phi\|_{W^{2, p}}$, where $C$ only depends on $\delta_0$. Again, this is obviously true for $f^0$, so we only need to consider $f^{0, r}$. We write out $\frac{\p^2 S}{\p x^\alpha \p x^\beta}$:
		\begin{align*}
			&\left| \frac{\p^2 S}{\p x^\alpha \p x^\beta} \right| = \left| \frac{\p^2 S}{\p y^i \p y^j} \frac{\p f^i}{\p x^\alpha}\frac{\p f^j}{\p x^\beta} + \frac{\p S}{\p y^i} \frac{\p^2 f^i}{\p x^\alpha \p x^\beta} \right|
		\end{align*}
		The only term that matters is $\frac{\p^2 f^i}{\p x^\alpha \p x^\beta}$. We can get by direct computation:
		\begin{align*}
			\left| \frac{\p^2 \exp_y^i (\rho(\delta Rz) \zeta^0(z))}{\p x^\alpha \p x^\beta} \right| \leq & C(\delta_0) (\delta R + 1)
		\end{align*}
		Then we have
		\begin{align*}
			\left\|\frac{\p^2 S}{\p x^\alpha \p x^\beta} \phi\right\|_{L^p} \leq C(\delta_0) \|\phi\|_{C^0} \leq C(\delta_0) \|\phi\|_{W^{2, p}}
		\end{align*}
		by the Sobolev embedding.
		
		So far we have shown that $\|\tilde{\xi}\|_{W^{2, p}} \leq \|\xi\|_{W^{2, p}}$. The proof for $f^\infty$ is the same. For the other direction, consider $P^{-1} = (id + S)^{-1}$. For $\delta_0$ small enough so that the entries of $S$ are sufficiently small, $(id + S)^{-1} $ exists and is just a function of the entries of $S$. Thus we have finished the proof.
	\end{proof}
	
	\begin{proposition}
		\label{D_perturbation}
		For any $(\tilde{f}^0, \tilde{f}^\infty) \in \mathcal{M} (c, p)$ and any $\epsilon > 0$, we can choose a neighborhood $\mathcal{U}$ in $\mathcal{M} (c, p)$ and $\delta_0 = \delta_0 (c, p, N, \mathcal{U}, \epsilon) > 0$ such that, for any $(\delta, R) \in \mathcal{A} (\delta_0)$ and any $(f^0 , f^\infty) \in \mathcal{U}$, under the identification in Proposition \ref{space_equivalence},
		\begin{equation*}
			\|D_{0, r} - D_0\| < \epsilon,
		\end{equation*}
		where $D_{0, r} := D_{f^{0, r}}$ and $D_0 := D_{f^0}$. Recall the definition of $D_f$ in Equation (\ref{Df}) and the definition of $f^{0, r}$ in Equation (\ref{f0r}).
	\end{proposition}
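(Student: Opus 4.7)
The plan is to exploit two facts: (i) $f^0$ and $f^{0,r}$ coincide outside the small region $\Omega := \{|z| \leq 2/(\delta R)\}$, so after identifying the Sobolev spaces via the parallel transport $P$ constructed in the proof of Proposition \ref{space_equivalence}, the operator difference $D_{0,r} - D_0$ is supported in $\Omega$; and (ii) inside $\Omega$, both maps take values in a uniformly small geodesic ball around $y$ (of radius $O(1/\delta R)$), so the geometric data entering the two operators are uniformly close, while the weighted measure of $\Omega$ is small.

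First, write $D_f \xi = \nabla^*\nabla \xi + \sum_\alpha R^N(Z_\alpha,\xi)Z_\alpha$ as in (\ref{intrinsic}), working in the normal coordinate chart at $y$ that was fixed in the proof of Proposition \ref{space_equivalence} (valid on $\Omega$ for $\delta_0$ small). In these coordinates, for $\xi = \xi^k \partial/\partial y^k$ we have $\nabla_\alpha \xi^k = e_\alpha(\xi^k) + \Gamma^N_{ij}(f)\xi^i f^j_\alpha$, so $\nabla^*\nabla \xi$ is a polynomial in $\Gamma^N(f)$, $\partial \Gamma^N(f)$, $df$, $d^2 f$ contracted against $\xi$, $\nabla \xi$, $\nabla^2 \xi$. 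The curvature term is zeroth order in $\xi$ and polynomial in $R^N(f)$, $df$. Subtract the expressions for $f=f^0$ and $f=f^{0,r}$ (composing one side with $P$); outside $\Omega$, both operators and $P$ agree, so the difference vanishes identically.

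Inside $\Omega$, decompose the difference into three kinds of terms: (A) coefficient differences like $\Gamma^N(f^{0,r})-\Gamma^N(f^0)$ and $R^N(f^{0,r})-R^N(f^0)$, which by smoothness of $N$ are bounded by $C \cdot |f^{0,r}-f^0|_{C^0(\Omega)} \leq C/(\delta R)$; (B) first-derivative differences $df^{0,r}-df^0$, bounded in $L^\infty(\Omega)$ by $C$ using the derivative estimates already carried out in Section \ref{construction_of_the_gluing_map}; and (C) second-derivative differences $d^2 f^{0,r} - d^2 f^0$, bounded by $C(\delta R + 1)$ from the same computation. Each term is multiplied by at most $|\nabla^2 \xi|$, $|\nabla \xi|$, or $|\xi|$. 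Applying H\"older's inequality on $\Omega$ with respect to the weighted measure $\theta^R(z)^{-2}\,ds\,dt$, and using Sobolev embedding $W^{2,p}\hookrightarrow C^{0,\gamma}$ (with $\gamma = 2 - 2/p$, exactly as invoked in Lemma \ref{approx_right_inverse_est11}), each contribution is bounded by
\begin{equation*}
C(c,p,N,\mathcal{U}) \cdot \bigl(\text{weighted measure of }\Omega\bigr)^{\alpha} \cdot (\delta R)^{\beta} \cdot \|\xi\|_{2,p,R}
\end{equation*}
for appropriate positive exponents. The weighted measure of $\Omega$ in the region $|z|\leq 1/R$ is $O(1)$ with respect to the rescaled variable $w = R^2 z$ (essentially a fixed ball), while in $1/R \leq |z| \leq 2/(\delta R)$ one computes $\int \theta^R(z)^{-2}\,ds\,dt \leq C(\delta R)^{-2}$. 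The powers combine to give an overall bound of the form $C(c,p,N,\mathcal{U}) (\delta R)^{-\kappa}$ with $\kappa > 0$, which is made less than $\epsilon$ by choosing $\delta_0$ small.

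Finally, the contribution from $P - \mathrm{id}$ and its derivatives (entering through $D_{0,r}(P\xi) - P D_{0,r}\xi$ when we rewrite the identification) is handled exactly as in the proof of Proposition \ref{space_equivalence}: $\|S\|_{C^0}, \|\partial S\|_{L^\infty}, \|\partial^2 S\|_{L^p}$ all tend to zero uniformly as $\delta_0 \to 0$, because $S$ depends smoothly on $\rho(\delta R z)\zeta^0(z)$ which is $O(1/\delta R)$ in $C^0$. The main obstacle is the second-order term (C), since $d^2 f^{0,r}$ genuinely grows like $\delta R$ on $\Omega$; the point is that this factor multiplies $\xi$ rather than $\nabla^2 \xi$, so after moving $\xi$ into $L^\infty$ via Sobolev embedding and pairing the remaining factor $(\delta R + 1)$ against the weighted-area bound $(\delta R)^{-2/p}$ from integrating $\theta^R(z)^{-2}$ over $\Omega$, the resulting bound $(\delta R)^{1-2/p}$ is still a negative power of $\delta R$ because $p<2$. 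The delicate bookkeeping between these two opposing powers of $\delta R$ is the crux, and it is precisely the same bookkeeping that appears already in the estimate of $\|\mathcal{F}_{f^R}(0)\|_{0,p,R}$ in Section \ref{construction_of_the_gluing_map}.
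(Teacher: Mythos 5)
Your coordinate-level decomposition of the operator difference into coefficient differences, combined with H\"older and Sobolev embedding, is the same mechanism the paper uses (it splits $\|D_{0,r}\tilde\xi - D_0\xi\| \leq \|D_{0,r}\tilde\xi - D_{0,r}\xi\| + \|D_{0,r}\xi - D_0\xi\|$ and controls each term by Sobolev embedding into $L^{p^*}$ plus the smallness of the neck region). However, there is a genuine gap in your measure bookkeeping.

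The operator norm in this proposition is between the \emph{unweighted} Sobolev spaces $W^{2,p}(S^2,(f^0)^{-1}TN)\to L^p(S^2,(f^0)^{-1}TN)$ appearing in Proposition~\ref{space_equivalence}; the relevant volume density is the Fubini--Study density $(1+|z|^2)^{-2}\,ds\,dt$, which is $\approx 1$ on $\Omega=\{|z|\leq 2/(\delta R)\}$, so $|\Omega|\approx 4\pi(\delta R)^{-2}$ and in particular $|\{|z|\leq 1/R\}|\approx\pi R^{-2}$ is small. You instead integrate against the $\theta^R(z)^{-2}$ weight from the $(0,p,R)$-norm, which is the appropriate weight for $D_{f^R}$ on the glued sphere but not here. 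Under $\theta^R$ one computes $\int_{|z|\leq 1/R}\theta^R(z)^{-2}\,ds\,dt=\pi/2$, so the $\theta^R$-weighted measure of $\Omega$ is $O(1)$, not $O((\delta R)^{-2})$ as your bookkeeping requires. This is fatal for the coefficient differences that are merely $O(1)$ on $\{|z|\leq 1/R\}$: for instance the zeroth-order coefficient $R^N(f^{0,r})(df^{0,r})^2-R^N(f^0)(df^0)^2=-R^N(f^0)(df^0)^2$ is $O(1)$ there because $df^{0,r}\equiv 0$ for $|z|\leq 1/(\delta R)$, and pairing an $O(1)$ coefficient with an $O(1)$ measure gives an $O(1)$ contribution, not an $\epsilon$-small one. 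Replacing the $\theta^R$-weight by the Fubini--Study density repairs the argument: the $O(1)$ coefficient differences on $\{|z|\leq 1/R\}$ are then integrated over a set of measure $O(R^{-2})$, which supplies the small factor you need and recovers your intended power-counting.
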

	\begin{proof}
		Still, we only have to consider the chart centered at $y$ and $|z| \leq \frac{2}{\delta R}$. As in the proof of Proposition \ref{space_equivalence}, let $\tilde{\xi}$ and $\xi$ be sections of $W^{2, p} (M, (f^0)^{-1}TN)$ and $W^{2, p} (M, (f^0)^{-1} TN)$ respectively such that they are the same after the identification. We want to prove that for $\delta_0$ small enough, $\| D_{0, r} \tilde{\xi} - D_0 \xi\|_{L^p} <{ \epsilon} \|\xi\|_{W^{2, p}}$ for any such pair of sections. We have
		\begin{equation*}
			\| D_{0, r} \tilde{\xi} - D_0 \xi \|_{L^p} \leq \| D_{0, r} \tilde{\xi} - D_{0, r} \xi \|_{L^p} + \| D_{0, r} \xi - D_0 \xi \|_{L^p} 
		\end{equation*}
		Consider the right hand side. For the first term, by (\ref{Df}) and Appendix \ref{uniform_bound_proof}, we only need to control $\| \tilde{\xi} - \xi\|_{W^{2, p}}$. In the proof of Proposition \ref{space_equivalence}, we have $\tilde{\xi}^i = \xi^k (\delta_i^k + M_{ik})$, where the norm of $M$ converges to 0 as $\delta_0$ converges to 0. So the second order derivative term in $\tilde{\xi} - \xi$ can be controlled. For the first order derivative term, we can use the Sobolev embedding to control the $L^{p^*}$ norm (of first order derivative), where $p^* = 2p / (2 - p) > 2p$. Then consider its multiplication with the characteristic function of the neck, and use Holder's inequality. Since the measure of the neck converges to 0, we can control the norm. The second term can be controlled similarly.
	\end{proof}
	
	\section{Apriori Estimates for the Differential Operators}
	\label{apriori}
	
	First, we write out the intrinsic formula for the linearization of the harmonic map operator and its proof by T. Parker.
	
	Let $F: M \times (-\epsilon, \epsilon) \times (-\epsilon, \epsilon)$ be a two-parameter variation of a map $f: M\rightarrow N$ (not assumed to be harmonic). Write $F (x, s, t)$ as $f_{s, t} (x)$, so $f = f_{0, 0}$ and set
	\begin{equation}
		\label{Zalpha}
		X = f_* \frac{\p}{\p s} \quad Y = f_* \frac{\p}{\p t} \quad Z_\alpha = f_* e_\alpha
	\end{equation}
	where $\{e_1, e_2\}$ is a local orthonormal frame of $TM$. The energy of $f_{s, t}$
	\begin{equation*}
		E(f_{s, t}) = \frac{1}{2} \int_M |df_{s, t}|^2
	\end{equation*}
	is a function of $(s, t)$. The first variation of $E$, applied to $X$, is the function
	\begin{equation*}
		\phi(s, t) = (\delta E)_{f_{s, t}} (X) = \int_M \langle P(f_{s, t}), X\rangle,
	\end{equation*}
	where $P(f) = \nabla^* df$. The partial derivative of $\p_t \phi (s, t)$ with respect to $t$ is, on the one hand,
	\begin{equation}
		\int_M Y \cdot \langle P(f_{s, t}), X\rangle = \int \langle \nabla_Y P(f_{s, t}), X\rangle + \langle P(f_{s, t}), \nabla_Y X\rangle.
	\end{equation}
	On the other hand, $\p_t \phi(s, t)$ is the second variation of $E$, which is given by the standard formula
	\begin{align*}
		(\delta^2 E)_{f_{s, t}} (X, Y) &= \int_M \langle \nabla Y, \nabla X\rangle - \sum_\alpha \langle f_* (e_\alpha), R^N (f_* (e_\alpha), Y) X\rangle + \int_M \langle \nabla^* df, \nabla_X Y\rangle\\
		&= \int_M \langle \nabla^* \nabla Y, X\rangle + \sum_\alpha \langle R^N (Z_\alpha, Y) Z_\alpha, X\rangle + \int_M \langle P(f), \nabla_X Y\rangle
	\end{align*}
	(cf. J. Jost \cite[Theorem 8.2.1]{jost}). Comparing the last two displayed equations shows that
	\begin{equation*}
		\nabla_Y P = \nabla^* \nabla Y + \sum_\alpha R^N (Z_\alpha, Y) Z_\alpha.
	\end{equation*}
	Using the setup and notation of equations (\ref{222})-(\ref{Df}), $D_f (Y) = \nabla_Y P(f_{s, t})\mid_{s = t = 0}$. Thus
	\begin{equation}
		\label{intrinsic}
		D_f (Y) = \nabla^* \nabla Y + \sum_\alpha R^N (Z_\alpha, Y) Z_\alpha.
	\end{equation}

	We will prove an $L^p$ estimate for the operators $D_0 := D_{f^0}$ and $D_{f^\infty} := D_\infty$. Recall the definition of $D_f$ in Equation (\ref{Df}).
	
	\begin{proposition}
		Let $f$ be a smooth map from $M$ to $N$, where $M$ and $N$ are Riemannian manifolds. Then $D_f$ defined in (\ref{Df}) is locally a second order strongly elliptic system.
	\end{proposition}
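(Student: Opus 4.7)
The plan is to use the intrinsic formula (\ref{intrinsic}), namely $D_f(Y) = \nabla^*\nabla Y + \sum_\alpha R^N(Z_\alpha, Y) Z_\alpha$, and extract the principal symbol. The curvature term is fiberwise $\mathbb{R}$-linear in $Y$ with no derivatives, hence it is a zeroth-order operator with smooth coefficients and contributes nothing to the principal symbol. So the claim reduces to analyzing the principal symbol of the connection Laplacian $\nabla^*\nabla$ on sections of $f^{-1}TN$.

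To carry this out, I would work in local coordinates $(x^\alpha)$ on $M$ together with a local frame $\{E_i\}$ for $f^{-1}TN$ induced by coordinates $(y^i)$ on $N$ near $f(x)$. Writing $Y = Y^i E_i$ and expanding $\nabla^*\nabla$ as in Nicolaescu \cite[Example 10.1.32]{liviu2007geometryofmanifolds}, the second order part is $-g^{\alpha\beta}(x)\partial_\alpha\partial_\beta Y^i E_i$; all coupling between components of $Y$ and all effects of the pulled-back Christoffel symbols $(\Gamma^N)^i_{jk}(f)\,\partial_\alpha f^k$ enter only at orders at most one. Therefore the principal symbol at a nonzero covector $\xi = \xi_\alpha dx^\alpha \in T^*_x M$ is
\begin{equation*}
    \sigma_2(D_f)(\xi) = g^{\alpha\beta}(x)\,\xi_\alpha \xi_\beta \cdot \mathrm{Id}_{(f^{-1}TN)_x} = |\xi|_g^2 \cdot \mathrm{Id},
\end{equation*}
a positive scalar multiple of the identity on each fiber.

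This immediately gives the Legendre--Hadamard (strong ellipticity) condition: for any nonzero $\xi \in T^*_x M$ and any nonzero $v \in (f^{-1}TN)_x$, one has $\langle \sigma_2(D_f)(\xi) v, v\rangle = |\xi|_g^2 |v|^2 > 0$. The only step requiring care is verifying that the pulled-back Christoffel symbols appear strictly in terms of order at most one in $Y$; this is automatic from the definition of the induced connection on $f^{-1}TN$, whose Christoffel symbols $(\Gamma^N)^i_{jk}(f)\,\partial_\alpha f^k$ depend on $f$ and $\partial f$ but not on any derivative of $Y$. I do not anticipate any real obstacle here --- the proposition is essentially a one-line principal-symbol computation once the intrinsic formula (\ref{intrinsic}) is in hand.
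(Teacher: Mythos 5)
Your proof is correct and, in fact, cleaner and more self-contained than the sketch in the paper. The paper's own proof is a one-line outline that works in normal coordinates at $f(x_0)$, expands the operator directly from the definition, and then invokes a coordinate-change lemma (the references there --- ``$(\ref{D_f})$'' and ``Lemma~\ref{coord_est}'' --- are dangling, so the intended argument is not actually spelled out). You instead start from the intrinsic formula $D_f(Y) = \nabla^*\nabla Y + \sum_\alpha R^N(Z_\alpha, Y)Z_\alpha$, already established in the paper, and read off the principal symbol. This avoids any need to track the operator under coordinate changes, because the connection Laplacian is manifestly coordinate-free and its principal symbol $g^{\alpha\beta}\xi_\alpha\xi_\beta\cdot\mathrm{Id} = |\xi|_g^2\cdot\mathrm{Id}$ is standard, while the curvature term is fiberwise linear in $Y$ with smooth coefficients and hence zeroth order. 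Your observation that the Christoffel symbols of the pullback connection $(\Gamma^N)^i_{jk}(f)\,\partial_\alpha f^k$ involve $f$ and $\partial f$ but never derivatives of $Y$ --- so they contribute only to orders $\leq 1$ --- is exactly the point one needs to check, and you check it. The Legendre--Hadamard condition then follows immediately since the symbol is a positive scalar times the identity on each fiber. In short: same conclusion as the paper, but your route via (\ref{intrinsic}) is the more robust argument, and it also sidesteps the paper's reliance on an unstated auxiliary lemma.
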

	
	\begin{proof}
		Say we use the normal coordinates centered at $f(x_0)$ on $N$. From (\ref{D_f}) we get operator in coordinates, we then consider the coordinate change and apply Lemma \ref{coord_est}, we get the result.
	\end{proof}
	
	\begin{lemma}[$L^p$ estimate]
		\label{L^p_estimate}
		For any $(\tilde{f}^0, \tilde{f}^\infty) \in \mathcal{M} (c, p)$ where $\mathcal{M} (c, p)$ is as defined in Definition \ref{moduli_defn_nonsurj}, we can choose a neighborhood $\mathcal{U}$ in $\mathcal{M} (c, p)$ such that, for every $(f^0, f^\infty) \in \mathcal{U}$ and $\xi = (\xi^0, \xi^\infty) \in W^{2,p}_{f^{0,\infty}}$, we have
		\begin{align*}
			\|\xi\|_{W^{2,p}} \leq c_0 (\|D_{0,\infty} \xi\|_{L^p} + \|\xi\|_{L^p})
		\end{align*}
		where $c_0$ only depends on $c, p, N, \mathcal{U}$.
	\end{lemma}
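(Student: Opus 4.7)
The plan is to reduce the combined estimate to two single-factor estimates and then invoke standard interior $L^p$ elliptic theory on each factor, with a compactness argument to make the constant uniform over the neighborhood $\mathcal{U}$. Since $D_{0,\infty}(\xi^0,\xi^\infty) = (D_{f^0}\xi^0, D_{f^\infty}\xi^\infty)$ is block diagonal and the $W^{2,p}$ and $L^p$ norms on $W^{2,p}_{f^{0,\infty}}$ split as direct sums, it suffices to prove that for each smooth map $f:\Sigma\to N$ in a small enough neighborhood of $\tilde f^i$ (with $\Sigma=S^2$ for our main case, but the argument works for any closed Riemann surface), one has
\begin{equation*}
\|\eta\|_{W^{2,p}(\Sigma,f^{-1}TN)} \le c_0\bigl(\|D_f\eta\|_{L^p} + \|\eta\|_{L^p}\bigr),
\end{equation*}
with $c_0$ depending only on $c,p,N,\mathcal{U}$.

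First I would localize. Cover $\Sigma$ by finitely many geodesic balls $\{U_j\}$ small enough that each image $f(U_j)$ lies in a normal coordinate chart centered at some $y_j\in N$; this is possible because $\|df\|_{L^\infty}\le c$ uniformly in $\mathcal{U}$ and $N$ has a positive injectivity radius. On each such chart, $f^{-1}TN|_{U_j}$ is trivialized by parallel transport along radial geodesics from $y_j$, and in these trivializations $\eta$ becomes an $\mathbb{R}^n$-valued function on $U_j$. By the proposition that $D_f$ is locally a second order strongly elliptic system, $D_f$ in these coordinates is a strongly elliptic $n\times n$ system of the form $a^{\alpha\beta}_{ij}\partial_\alpha\partial_\beta + (\text{lower order})$ whose top-order coefficients are determined by the metric $g$ on $\Sigma$, and whose lower-order coefficients depend on $f,\,df$ and Christoffel symbols of $N$, all of which are bounded in $C^0$ (indeed $C^1$, using $\|d^2 f\|_{L^\infty}\le c$) uniformly in $\mathcal{U}$. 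Choose a partition of unity $\{\chi_j\}$ subordinate to $\{U_j\}$.

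Next, apply the standard interior $L^p$ estimate for strongly elliptic systems (Agmon--Douglis--Nirenberg, or equivalently Calderón--Zygmund together with Gårding-type ellipticity; see e.g.\ H\"ormander) on each chart to obtain
\begin{equation*}
\|\chi_j\eta\|_{W^{2,p}(U_j)} \le C_j\bigl(\|D_f(\chi_j\eta)\|_{L^p(U_j)} + \|\chi_j\eta\|_{L^p(U_j)}\bigr),
\end{equation*}
and expand $D_f(\chi_j\eta) = \chi_j D_f\eta + [D_f,\chi_j]\eta$, where the commutator is a first-order operator with bounded coefficients, so $\|[D_f,\chi_j]\eta\|_{L^p}\le C(\|\nabla\eta\|_{L^p}+\|\eta\|_{L^p})$. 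Summing over $j$ and using $\|\eta\|_{W^{2,p}}\le \sum_j\|\chi_j\eta\|_{W^{2,p}}$ (up to a constant from the partition of unity) yields
\begin{equation*}
\|\eta\|_{W^{2,p}} \le C\bigl(\|D_f\eta\|_{L^p} + \|\nabla\eta\|_{L^p}+ \|\eta\|_{L^p}\bigr).
\end{equation*}
The $\|\nabla\eta\|_{L^p}$ term on the right is absorbed by interpolation (Gagliardo--Nirenberg or the standard trick of bounding $\|\nabla\eta\|_{L^p}\le \varepsilon\|\nabla^2\eta\|_{L^p} + C_\varepsilon\|\eta\|_{L^p}$) together with moving the resulting $\varepsilon\|\eta\|_{W^{2,p}}$ to the left-hand side.

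The main point requiring care is uniformity of the constant $c_0$ over the neighborhood $\mathcal{U}$. For each fixed $f$ the estimate is classical, but the constants in the interior $L^p$ estimate depend on the modulus of continuity of the top-order coefficients and on a uniform ellipticity constant, and the constants from change of trivialization depend on the norm and inverse-norm of the parallel-transport matrices described in Appendix \ref{uniform_bound_proof}. The condition $\|df^i\|_{L^\infty}+\|d^2 f^i\|_{L^\infty}\le c$ in the definition of $\mathcal{M}(c,p)$ gives a uniform $C^1$ bound on the coefficients, and by shrinking $\mathcal{U}$ we can moreover arrange that all $f\in\mathcal{U}$ admit the same finite cover $\{U_j\}$ of $\Sigma$ with $f(U_j)$ inside a common normal chart of $N$; this is where $\mathcal{U}$ enters the constant. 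This is essentially the only subtle step. Combining everything gives the asserted estimate with $c_0=c_0(c,p,N,\mathcal{U})$.
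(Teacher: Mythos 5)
Your proof is correct and follows essentially the same route as the paper: reduce the block-diagonal estimate to the single-map case, observe that $D_f$ is a second-order strongly elliptic system in local coordinates, and apply the standard $L^p$ interior estimates (the paper simply cites Gilbarg--Trudinger, Section 9.5, for this step). Your version spells out the localization, partition of unity, commutator and absorption steps, and --- usefully --- makes explicit the uniformity of the constant over $\mathcal{U}$ via the $C^1$ bounds from Definition \ref{moduli_defn_nonsurj}, which the paper's proof leaves implicit.
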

	
	\begin{proof}
		We only need to show that
		\begin{equation*}
			\|\xi^0\|_{W^{2,p}} \leq c_0 (\|D_{f^0} \xi^0\|_{L^p} + \|\xi^0\|_{L^p}).
		\end{equation*}
		From (\ref{D_f_same_chart}) we know
		$D_0 : W^{2,p}(S^2, {(f^0)}^{-1} TN) \rightarrow L^p (S^2, {(f^0)}^{-1} TN)$ is a linear elliptic system, where each second order derivative term only contains a single component. We can apply the proof for $L^p$ estimates in \cite{gilbarg_trudinger} (See section 9.5) to prove the same for our case of elliptic system.
	\end{proof}
	
	The following is an analog of Lemma 10.6.1 in \cite{mcduff}:
	\begin{lemma}(Compare McDuff and Salamon \cite[Lemma 10.6.1, p. 392]{mcduff} for the analogous statement)
		\label{Q_est}
		For any $(\tilde{f}^0, \tilde{f}^\infty) \in \mathcal{M} (c, p)$, where $\mathcal{M} (c, p)$ is defined as in Definition \ref{moduli_defn_nonsurj}, we can choose a neighborhood $\mathcal{U}$ in $\mathcal{M} (c, p)$ and positive constants $\delta_0$ and $c_0$ only depending on $c, p, N, \mathcal{U}$ such that, for all $(f^0, f^\infty) \in \mathcal{U}$ and $(\delta, R) \in \mathcal{A} (\delta_0)$, the following holds for $r := \delta R$:
		
		(\romannum{1}) For every $\xi = (\xi^0 , \xi^\infty) \in W^{2,p}_{f^{0,\infty,r}}$, we have
		\begin{equation*}
			\|\xi\|_{W^{2,p}} \leq c_0 (\| D_{0,\infty,r} \xi\|_{L^p} + \|\xi\|_{L^p}).
		\end{equation*}
		where $W^{2, p}_{f^{0, \infty, r}} := W^{2, p}_{f^{0, r}, f^{\infty, r}}$ and $D_{0, \infty, r} := D_{f^{0, r}, f^{\infty, r}}$. Recall the definition of $W^{2, p}_{f_1, f_2}$ in Definition \ref{spaces_and_operator}.
		
		(\romannum{2}) For every $(\xi, \tilde{v}) = (\xi^0 , \xi^\infty, \tilde{v}) \in W^{2,p}_{f^{0,\infty,r}} \times \tilde{V}$, we have
		\begin{equation*}
			D_{0,\infty,r} \xi + \sigma(\tilde{v}) = 0\ \Rightarrow\ \|\xi\|_{W^{2,p}} + \|\tilde{v}\|_{\tilde{V}} \leq c_0 \|(\xi, \tilde{v})\|_{L^2}.
		\end{equation*}
		
		(\romannum{3}) For every $\eta = (\eta^0, \eta^\infty) \in L^p_{f^{0,r}} \times L^p_{f^{\infty,r}}$ we have
		\begin{equation*}
			\| Q_{0,\infty,r} \eta \|_{W^{2,p} \times \tilde{V}} \leq c_0 \|\eta\|_{L^p},
		\end{equation*}
		where $Q_{0, \infty, r}$ is defined as in Equation (\ref{Q0infr}).
	\end{lemma}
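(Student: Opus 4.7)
The three parts are essentially standard consequences of the unperturbed elliptic estimate (Lemma~\ref{L^p_estimate}) together with a compactness argument, once the perturbation $D_{0,\infty,r} - D_{0,\infty}$ has been made operator-norm small via Proposition~\ref{D_perturbation}. The identification of the varying Sobolev spaces $W^{2,p}_{f^{0,\infty,r}}$ with the fixed spaces $W^{2,p}_{f^{0,\infty}}$ is provided by Proposition~\ref{space_equivalence}, so I will freely pass between them below.

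For (i), I would apply Lemma~\ref{L^p_estimate} to obtain
\begin{equation*}
	\|\xi\|_{W^{2,p}} \leq c_0 \bigl(\|D_{0,\infty}\xi\|_{L^p} + \|\xi\|_{L^p}\bigr)
\end{equation*}
with a constant depending only on $c, p, N, \mathcal{U}$, and then write $D_{0,\infty}\xi = D_{0,\infty,r}\xi + (D_{0,\infty} - D_{0,\infty,r})\xi$. By Proposition~\ref{D_perturbation}, choosing $\delta_0$ small (depending on $c, p, N, \mathcal{U}$) forces $\|(D_{0,\infty} - D_{0,\infty,r})\xi\|_{L^p} \leq \epsilon \|\xi\|_{W^{2,p}}$ for any prescribed $\epsilon$; taking $\epsilon = 1/(2c_0)$ lets me absorb this term into the left-hand side, yielding the claim with constant $2c_0$.

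For (ii), I would argue by contradiction and compactness. If no uniform $c_0$ existed, there would be sequences $(f^0_n,f^\infty_n) \in \mathcal{U}$, $(\delta_n, R_n) \in \mathcal{A}(\delta_0)$, and $(\xi_n, \tilde{v}_n)$ with $D_{0,\infty,r_n}\xi_n + \sigma(\tilde{v}_n) = 0$, $\|\xi_n\|_{W^{2,p}} + \|\tilde{v}_n\|_{\tilde{V}} = 1$, but $\|(\xi_n,\tilde{v}_n)\|_{L^2} \to 0$. By part (i) the sequence is $W^{2,p}$-bounded, so Rellich--Kondrakov on the compact surface $\Sigma_1 \#_{\delta_n,R_n} \Sigma_2$ (after pulling back to a fixed domain via Proposition~\ref{space_equivalence}) gives strong $L^p$ convergence along a subsequence; since $L^2 \hookrightarrow L^p$ on a compact surface and $\|\xi_n\|_{L^2} \to 0$, the $L^p$ limit is zero, and $\tilde{v}_n \to 0$ in the finite-dimensional space $\tilde{V}$. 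But then the right-hand side of (i), which equals $\|\sigma\tilde{v}_n\|_{L^p} + \|\xi_n\|_{L^p}$, tends to zero, forcing $\|\xi_n\|_{W^{2,p}} \to 0$ and contradicting the normalization.

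For (iii), I would write $(\xi, \tilde{v}) = Q_{0,\infty,r}\eta$, so that $D_{0,\infty,r}\xi + \sigma(\tilde{v}) = \eta$ with $\xi \in \mathcal{W}_{f^{0,\infty,r}}$. Part (i) gives $\|\xi\|_{W^{2,p}} \leq c_0(\|\eta\|_{L^p} + \|\sigma\tilde{v}\|_{L^p} + \|\xi\|_{L^p})$, so it suffices to bound $\|\xi\|_{L^p} + \|\tilde{v}\|_{\tilde{V}}$ by $\|\eta\|_{L^p}$. This is again a contradiction/compactness argument: a normalized sequence with $\|\eta_n\|_{L^p} \to 0$ would, by (i) and Rellich--Kondrakov, produce a limit $(\xi_\infty, \tilde{v}_\infty)$ in $\mathcal{W}_{f^{0,\infty}} \times \tilde{V}$ with $D_{0,\infty}\xi_\infty + \sigma(\tilde{v}_\infty) = 0$; but by construction $D_{0,\infty}|_{\mathcal{W}_{f^{0,\infty}}} \oplus \sigma$ is an isomorphism (see (\ref{Q0inf})), so $(\xi_\infty, \tilde{v}_\infty) = 0$, violating the normalization.

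The main obstacle is that in (ii) and (iii) the domains $W^{2,p}_{f^{0,\infty,r_n}}$ and the orthogonal complements $\mathcal{W}_{f^{0,\infty,r_n}}$ both vary with $n$. The technical heart of the proof is to verify that, after the identification from Proposition~\ref{space_equivalence}, strong $L^p$ limits of sequences in $\mathcal{W}_{f^{0,\infty,r_n}}$ land in $\mathcal{W}_{f^{0,\infty}}$; equivalently, the $L^2$-orthogonal projection onto the (finite-dimensional) kernel depends continuously on the parameters $(\delta, R, f^0, f^\infty)$ as $\delta_0 \to 0$ and $(f^0, f^\infty) \to (\tilde{f}^0, \tilde{f}^\infty)$. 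Once this continuity is established, all the limiting operators agree with $D_{0,\infty} \oplus \sigma$ on their natural domains, and the compactness contradictions go through cleanly.
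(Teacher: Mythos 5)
Your treatment of (\romannum{1}) coincides with the paper's: one identifies $W^{2,p}_{f^{0,\infty,r}}$ with $W^{2,p}_{f^{0,\infty}}$ via Proposition~\ref{space_equivalence}, invokes the unperturbed $L^p$ estimate of Lemma~\ref{L^p_estimate}, and absorbs the operator-norm-small difference $D_{0,\infty}-D_{0,\infty,r}$ supplied by Proposition~\ref{D_perturbation}. That part is fine.

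For (\romannum{2}) and (\romannum{3}) you depart from the paper. The paper's proof is an abstract, fully quantitative perturbation argument in the style of McDuff--Salamon \cite[Lemma 10.6.1]{mcduff}: writing $D = D_{0,\infty}\oplus\sigma$, $D' = D_{0,\infty,r}\oplus\sigma$, $\epsilon := \|D'-D\|\,\|Q\| < 1$, it shows directly that (a) $D'\zeta = 0 \Rightarrow \|\zeta\|_{\mathcal{W}} \leq \frac{c_D}{1-\epsilon}\|\zeta\|_{L^2}$, and (b) using the explicit isomorphism $\ker D' \ni \zeta \mapsto \zeta - QD\zeta \in \ker D$ and the orthonormal basis $e_i' - QDe_i' = e_i$, one obtains $\|\xi\|_{\mathcal{W}} \leq 2\|Q\|\,\|D'\xi\|_{\mathcal{L}}$ for $\xi \perp \ker D'$, hence $\|Q'\| \leq 2\|Q\|$ once $\epsilon$ is small. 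No limits, no Rellich--Kondrakov, and the constants track explicitly through the chain. Your (\romannum{2}), incidentally, goes through without any compactness at all: from the normalization, $\|\tilde{v}_n\|\to 0$ and $\|\xi_n\|_{L^2}\to 0$ force $\|\sigma\tilde{v}_n\|_{L^p} + \|\xi_n\|_{L^p}\to 0$ directly (finite measure, $p<2$), so (\romannum{1}) already kills $\|\xi_n\|_{W^{2,p}}$; invoking Rellich--Kondrakov on the glued surface is not needed (and in fact the domains here are the two disjoint spheres, not $\Sigma_1\#_{\delta,R}\Sigma_2$).

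The real issue is your (\romannum{3}). You correctly identify the sticking point --- the sequence $\xi_n$ lives in the orthogonal complement $\mathcal{W}_{f^{0,\infty,r_n}}$ of a kernel that varies with $n$, and you need the weak-$W^{2,p}$/strong-$L^p$ limit $\xi_\infty$ to land in $\mathcal{W}_{f^{0,\infty}}$ --- but you do not close it. This requires showing that the $L^2$-orthogonal projection onto $\ker(D_{0,\infty,r_n}\oplus\sigma)$ converges to the projection onto $\ker(D_{0,\infty}\oplus\sigma)$, i.e.\ that any $w\in\ker(D_{0,\infty}\oplus\sigma)$ is the $L^2$-limit of elements $w_n\in\ker(D_{0,\infty,r_n}\oplus\sigma)$. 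That claim is true, but proving it essentially forces you into the same quantitative comparison of kernels (via $\zeta\mapsto\zeta - QD\zeta$) that the paper carries out directly; the compactness framing does not let you bypass it. As written, the step ``all the limiting operators agree with $D_{0,\infty}\oplus\sigma$ on their natural domains'' is an assertion, not a proof, and it is exactly the nontrivial content of (\romannum{3}). I would recommend either supplying the projection-continuity lemma explicitly or simply replacing (\romannum{3}) with the paper's quantitative argument, which also has the advantage of giving the uniform bound $\|Q_{0,\infty,r}\| \leq 2\|Q_{0,\infty}\|$ with explicit dependence on $\delta_0$ --- something your argument, being purely qualitative, does not deliver.
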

	\begin{proof}
		The proof follows the same ideas as in McDuff and Salamon \cite[Lemma 10.6.1, p. 392]{mcduff}. Note that the part for $f^0$ and the part for $f^\infty$ are symmetric, so we only need to consider one, and the other can be proved in the same way. Let us consider $f^0$. For any $\xi^0 \in W^{2,p}_{f^{0,r}}$, note that $f^{0,r}$ only differs from $f^0$ in $|z| \leq \frac{2}{\delta R}$. In $|z| \leq \frac{2}{\delta R}$, we know that $f^0$ and $f^{0,r}$ are both close to $y$, so we can do a parallel transport of $\xi^0$ to get $\tilde{\xi^0} \in W^{2,p}_{f^0}$. From Lemma \ref{L^p_estimate} we know that
		\begin{align*}
			\|\tilde{\xi^0}\|_{W^{2,p}} \leq c_0 (\| D_{0} \tilde{\xi^0}\|_{L^p} + \|\tilde{\xi^0}\|_{L^p})
		\end{align*}
		From the previous section we know that the first estimate is proved.
		
		To prove the remaining estimates we consider the following abstract functional analytic setting. Suppose we have a surjective Fredholm operator
		\begin{equation*}
			D: \mathcal{W} \rightarrow \mathcal{L}
		\end{equation*}
		of index $d$ between two Banach spaces (think of the case $\mathcal{W} = W^{2,p}_{f^{0, \infty}} \times \tilde{V}$, and $\mathcal{L} = L^p_{f^0} \times L^p_{f^\infty}$, and $D = D_{0, \infty} \oplus \sigma$). We assume $\mathcal{W}$ is equipped with an inner product $\langle\cdot,\cdot\rangle$ and denote the corresponding norm by
		\begin{equation*}
			\|\xi\|_{L^2} := \sqrt{\langle \xi, \xi\rangle}.
		\end{equation*}
		(think of the $L^2$ inner product on $W^{2, p}_{f^{0, \infty}}$ and let $\tilde{V}$ be orthogonal to $W^{2, p}_{f^{0, \infty}}$) We assume further that there are positive constants $c$ and $c_D$ such that
		\begin{equation}
			\|\xi\|_{L^2} \leq c\|\xi\|_{\mathcal{W}}
		\end{equation}
		for every $\xi \in \mathcal{W}$ (this is true because the $L^2$ norm of the $W^{2, p}_{f^{0, \infty}}$ component is controlled by the $W^{2, p}_{f^{0, \infty}}$ norm) and
		\begin{equation}
			D\xi = 0 \ \Rightarrow\ \|\xi\|_{\mathcal{W}} \leq c_D \|\xi\|_{L^2}.
		\end{equation}
		This holds in our setting because the kernel of $D$ is finite dimensional. Denote by $Q : \mathcal{L} \rightarrow \mathcal{W}$ the right inverse of $D$ whose image is the orthogonal complement of the kernel with respect to the above inner product. Note that here $W^{2, p}_{f^{0, \infty}} \times \tilde{V}$ is just a subspace of the Hilbert space $L^2$. We can consider $\ker D$, which is finite dimensional, and its orthogonal space in $\mathcal{W}$, which we denote by $(\ker D)^\perp$. It is easy to show that $(\ker D)^\perp$ is closed (in $\mathcal{W}$), $\ker D \cap (\ker D)^\perp = \{0\}$, and $\ker D + (\ker D)^\perp = \mathcal{W}$. Thus we have $\mathcal{W} = \ker D \oplus (\ker D)^\perp$. Then we can construct the $Q$ the same way as for Hilbert spaces.
		
		Now we prove the norm of $Q$ is bounded: Since we already have $\mathcal{W} = \ker D \oplus (\ker D)^\perp$, we can consider $D$ on $(\ker D)^\perp$ and we know the inverse will be bounded by the open mapping theorem.
		
		Now suppose that $D^\prime : \mathcal{W} \rightarrow \mathcal{L}$ is another bounded linear operator ($D_{0,\infty,r} \oplus \sigma$ for example) such that
		\begin{equation*}
			\epsilon := \| D^\prime - D \| \|Q\| < 1.
		\end{equation*}
		Since $DQ = id$ we have $\| D^\prime Q - id\| < 1$ and so $D^\prime$ is surjective with right inverse $Q(D^\prime Q)^{-1}$. However, we wish to understand the right inverse $Q^\prime : \mathcal{L} \rightarrow \mathcal{W}$ whose image is the orthogonal complement of the kernel of $D^\prime$.
		
		As a first step we observe that, if $D^\prime \zeta = 0$, then $\| QD \zeta\|_{\mathcal{W}} \leq \epsilon\|\zeta\|_{\mathcal{W}}$ and $\|\zeta - QD\zeta\|_{\mathcal{W}} \geq (1 - \epsilon)\|\zeta\|_{\mathcal{W}}$. Hence
		\begin{equation}
			\label{17}
			D^\prime \zeta = 0\ \Rightarrow\ \|QD\zeta\|_{\mathcal{W}} \leq \frac{\epsilon}{1 - \epsilon} \|\zeta - QD\zeta\|_{\mathcal{W}}.
		\end{equation}
		Since $\zeta - QD\zeta \in \ker D$, we find $\| \zeta - QD\zeta\|_{\mathcal{W}} \leq c_D \|\zeta - QD\zeta\|_{\mathcal{L}} \leq c_D \|\zeta\|_{\mathcal{L}}$. The last inequality holds because $\zeta - QD\zeta$ is the orthogonal projection of $\zeta$ onto the kernel of $D$ (note that the image of $Q$ is $(\ker D)^\perp$.) But we saw above that $\|(1 - \epsilon)\zeta\|_{\mathcal{W}} \leq \|\zeta - QD\zeta\|_{\mathcal{W}}$. Hence
		\begin{equation*}
			D^\prime \zeta = 0\ \Rightarrow\ \|\zeta\|_{\mathcal{W}} \leq \frac{c_D}{1 - \epsilon}\|\zeta\|_{L^2}.
		\end{equation*}
		This proves (\romannum{2}).
		
		Now assume that $\xi \in \mathcal{W}$ is orthogonal to the kernel of $D^\prime$. Let $e_1, \cdots, e_d$ be an orthonormal basis of $\ker D$ and consider the basis $e_1^\prime, \cdots, e_d^\prime$ of $\ker D^\prime$ defined by
		\begin{equation*}
			e_i^\prime - QDe_i^\prime = e_i,\ i = 1,\cdots,d.
		\end{equation*}
		(The map $\ker D^\prime \rightarrow \ker D : \zeta \mapsto \zeta - QD\zeta$ is an isomorphism between the two kernels. We can see that since in the above argument, we have, if $D^\prime \zeta = 0$, then $\| QD \zeta\|_{\mathcal{W}} \leq \epsilon\|\zeta\|_{\mathcal{W}}$ and $\|\zeta - QD\zeta\|_{\mathcal{W}} \geq (1 - \epsilon)\|\zeta\|_{\mathcal{W}}$. Note that the index of Fredholm operators stays the same if we have $\|D^\prime - D\| < \epsilon$ for fixed $D$ and small enough $\epsilon$. Since the index is the same for $D^\prime$ and $D$, we know the map is surjective.)
		
		Since $\xi - QD \xi \in \ker D$ we have
		\begin{equation*}
			\xi - QD\xi = \sum_{i=1}^d \langle \xi,e_i\rangle e_i.
		\end{equation*}
		Moreover, $\langle \xi, e_i\rangle = \langle \xi, e_i - e_i^\prime \rangle = \langle \xi, -QDe_i^\prime\rangle$ and hence, by (\ref{17}),
		\begin{equation*}
			|\langle \xi, e_i \rangle| \leq c \|QDe_i^\prime\|_{\mathcal{W}} \|\xi\|_{\mathcal{W}} \leq \frac{c\epsilon}{1 - \epsilon} \|e_i\|_{\mathcal{W}} \|\xi\|_{\mathcal{W}} \leq \frac{cc_D \epsilon}{1 - \epsilon} \|\xi\|_{\mathcal{W}}.
		\end{equation*}
		Combining this with the previous identity we find
		\begin{equation*}
			\|\xi - QD\xi\|_{\mathcal{W}} \leq c_D \|\xi - QD\xi\|_{L^2} = c_D \sqrt{\sum_{i = 1}^d \langle \xi, e_i\rangle^2} \leq \frac{\sqrt{d}cc_D^2 \epsilon}{1 - \epsilon} \|\xi\|_{\mathcal{W}}.
		\end{equation*}
		Hence
		\begin{align*}
			\|\xi\|_{\mathcal{W}} &\leq \|QD\xi\|_{\mathcal{W}} + \|\xi - QD\xi\|_{\mathcal{W}}\\
			&\leq \|Q\| \|D^\prime \xi\|_{\mathcal{L}} + \|Q\| \|D^\prime - D\| \|\xi\|_{\mathcal{W}} + \|\xi - QD\xi\|_{\mathcal{W}}\\
			&\leq \|Q\| \|D^\prime \xi\|_{\mathcal{L}} + \epsilon \left( 1 + \frac{\sqrt{d}cc_D^2}{1 - \epsilon}\right) \|\xi\|_{\mathcal{W}}.
		\end{align*}
		If $\epsilon \leq 1/2$ and $\epsilon(1 + 2\sqrt{d}cc_D^2) \leq 1/2$ we deduce that $\|\xi\|_{\mathcal{W}} \leq 2\|Q\|\|D^\prime \xi\|_{\mathcal{L}}$ for every $\xi \in \mathcal{W}$ that is orthogonal to the kernel of $D^\prime$.
		
		Now recall that $\|D^\prime - D\| = \epsilon/\|Q\|$ and that $D^\prime$ is surjective, so that $D^\prime \xi$ runs over all elements in $\mathcal{L}$. It follows that there is a constant $\delta > 0 $ such that
		\begin{equation*}
			\|D^\prime - D\| < \delta\ \Rightarrow\ \|Q^\prime\| \leq 2 \|Q\|.
		\end{equation*}
		How small $\delta$ must be chosen depends only on the operator norm of $Q$ and the constants $d, c, c_D$. This finishes the proof.
	\end{proof}
	
	\section{Sobolev Inequalities}
	
	We consider Sobolev embeddings on the weighted sphere. We show that the constant in the inequality does not depend on $\delta$ or $R$, which will be needed in certain proofs. The following lemma is an analog of Lemma 10.3.1 in McDuff and \cite{mcduff}.
	
	\begin{lemma} (Compare McDuff and Salamon \cite[Lemma 10.3.1, p. 377]{mcduff} for the analogous statement for J-holomorphic curves)
		\label{sobolev_embedding}
		For any $(\tilde{f}^0, \tilde{f}^\infty) \in \mathcal{M} (c, p)$, where $\mathcal{M} (c, p)$ is defined as in Definition \ref{moduli_defn_nonsurj}, we can choose a neighborhood $\mathcal{U}$ in $\mathcal{M} (c, p)$, and $\delta_0 = \delta_0 (c, p, N, \mathcal{U})$ such that, for any $(f^0, f^\infty) \in \mathcal{U}$, and any $(\delta, R) \in \mathcal{A} (\delta_0)$, we have
		\begin{equation*}
			\| \xi \|_{L^\infty} \leq C (c, p, N, \mathcal{U}) \|\xi\|_{2, p, R}
		\end{equation*}
		for any $\xi \in W^{2, p} (S^2, (f^R)^{-1} TN)$, where $S^2$ is the weighted sphere defined in Equation (\ref{weighted_metric}) and $f^R$ is the pregluing defined in Equation (\ref{fR}).
	\end{lemma}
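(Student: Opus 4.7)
The plan is to split the glued sphere along the circle $\{|z| = 1/R\}$ and exploit the fact that each half, equipped with the weighted metric $g^R$, is isometric to a fixed subset of the round $S^2$, thereby removing all $R$-dependence from the geometric part of the Sobolev constant. On the outer half $\Omega^+ := \{|z| \geq 1/R\}$ the metric $g^R = (1+|z|^2)^{-2}(ds^2 + dt^2)$ is exactly the Fubini--Study metric, so $\Omega^+$ sits isometrically inside the round $S^2$. On the inner half $\Omega^- := \{|z| \leq 1/R\}$ the involution $w = 1/(R^2 z)$, which is an isometry as noted after (\ref{weighted_metric}), maps $(\Omega^-, g^R)$ isometrically onto $(\{|w| \geq 1/R\}, \text{Fubini--Study}) \subset S^2$. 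Hence $(S^2 \#_{\delta,R} S^2, g^R)$ is isometric to two standard hemispheres joined along their equators, and this identification is the sole place where uniformity in $R$ must be extracted.

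Next I would handle the bundle valuation. Since $N$ is a fixed compact manifold and $\mathcal{U}$ is precompact in $\mathcal{M}(c,p)$, the uniform bounds $\|df_i\|_{L^\infty} \leq c$ and $\|d^2 f_i\|_{L^\infty}\leq c$ from Definition \ref{moduli_defn_nonsurj}, together with the pregluing formula (\ref{fR}), yield uniform $C^2$ bounds on $f^R$ (measured with respect to $g^R$) depending only on $c, p, N, \mathcal{U}$. Fix a finite cover of $N$ by normal coordinate balls of radius less than half the injectivity radius; on each ball $TN$ admits a smooth trivialization whose transition matrices are bounded in $C^2$ by constants depending only on $N$. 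Pulling these trivializations back through $f^R$ gives local trivializations of $(f^R)^{-1}TN$ with $C^2$-bounded transition matrices, uniformly in $R$.

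With the geometry identified and the bundle uniformly trivialized, the proof reduces to the scalar embedding $W^{2,p}(S^2) \hookrightarrow C^{0,2-2/p}(S^2) \hookrightarrow L^\infty(S^2)$ on the round sphere for $1 < p < 2$ (Adams and Fournier \cite[Theorem 4.12]{adams2003sobolev} combined with a standard partition-of-unity argument on closed manifolds as in Aubin \cite[Section 2.3]{aubin}). Applied componentwise in each trivialization on $\Omega^+$, and on $\Omega^-$ after pulling back by $w = 1/(R^2 z)$, this yields
\begin{equation*}
	\|\xi\|_{L^\infty} \leq C(c, p, N, \mathcal{U}) \, \|\xi\|_{2,p,R}.
\end{equation*}

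The main obstacle is the bundle uniformity. Although the metric side of the argument is clean, checking that local trivializations of $(f^R)^{-1}TN$ can be chosen with $C^2$ bounds uniform in $R$ requires uniform $C^2$ control of $f^R$ on the neck, where the cutoff factors $\rho(\delta R z)$ and $\rho(\delta/(Rz))$ produce large derivatives in the Euclidean sense. I expect the crux to be verifying that these factors are exactly compensated by the weight $(R^{-2} + R^2|z|^2)$ in $g^R$, analogously to the computations of $\|\mathcal{F}_{f^R}(0)\|_{0,p,R}$ already carried out in Section \ref{construction_of_the_gluing_map}, and I would handle it by a direct derivative count on the two neck annuli $\{\delta/(2R) \leq |z| \leq 2/(\delta R)\}$.
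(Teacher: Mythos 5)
Your geometric decomposition --- split at $|z| = 1/R$, identify the outer piece with a subset of the round Fubini--Study sphere, and pull the inner piece over by the isometry $w = 1/(R^2 z)$ --- is exactly the paper's (very terse, three-sentence) proof. However, the ``direct derivative count'' you propose to close the bundle-uniformity gap would not go through as you describe: the paper's own computation in Section \ref{construction_of_the_gluing_map} gives $|\partial^2 f^R / \partial s^2| \leq C(\delta R + 1)$ pointwise on the annulus $1/(\delta R) \leq |z| \leq 2/(\delta R)$, where the conformal factor $\theta^R(z) = 1 + |z|^2$ is of order $1$, so the second derivatives of $f^R$ measured in $g^R$ are of order $\delta R$ and blow up as $\delta R \to \infty$. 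The weight does absorb the cutoff derivatives at first order, yielding $\sup\|df^R\| \leq C(c,p,N,\mathcal{U})$ (established in the proof of Proposition \ref{approx_right_inverse}), but not at second order; there is therefore no uniform $C^2$ trivialization of $(f^R)^{-1}TN$ of the kind your argument invokes, and your componentwise Morrey step would lose uniformity on the neck.

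The way out is that an $L^\infty$ bound never needs $W^{2,p}$-level control of the trivialization. The pointwise quantity $|\xi|_h$ is a scalar function whose value does not see any choice of local frame, and the Kato inequality $|\nabla|\xi|| \leq |\nabla\xi|$ for the metric-compatible pullback connection lets you argue entirely intrinsically: applying Kato to $\nabla\xi$ together with the scalar embedding $W^{1,p} \hookrightarrow L^q$, $q = 2p/(2-p) > 2$, gives $\|\nabla\xi\|_{L^q} \leq C\|\xi\|_{W^{2,p}}$; a second application of Kato plus the scalar Morrey embedding $W^{1,q} \hookrightarrow L^\infty$ on the two round hemispheres then finishes the argument. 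This route needs no pointwise bounds on $f^R$ at all, which is why the paper's brief appeal to ``the usual Sobolev embedding'' is legitimate once the isometric identification with the round sphere is in place.
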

	
	\begin{proof}
		We let $\delta_0$ be small enough as in Section \ref{pregluing}. 
		
		For $|z| \geq 1 / R$, the metric is the Fubini-Study metric on $S^2 \backslash B_{1 / R}$. Each point $z_0 \in S^2 \backslash B_{1 / R}$ is contained in a disc $D$ of radius $\pi / 4$. Then we get the result from the usual Sobolev embedding.
		
		For $|z| \leq 1 / R$, we can consider the coordinate change $w = 1 / R^2 z$ and the result follows in the same way as above.
	\end{proof}

\section{Estimate for the Differential of the Harmonic Map Operator}

In this section, we prove a lemma that will be useful in the application of the implicit function theorem. The lemma and the proof are similar to those in McDuff and Salamon \cite[Proposition 3.5.3, p. 70]{mcduff}. Before stating this lemma, we make the following definition, which is an analog of Remark 3.5.1 in McDuff and Salamon \cite[Remark 3.5.1, p. 69]{mcduff}.

\begin{definition}(Compare McDuff and Salamon \cite[Remark 3.5.1, p. 69]{mcduff} for a similar definition)
	Let $\Sigma$ be a closed Riemann surface. Given a constant $1 < p < 2$ and a positive volume form $dvol_{\Sigma}$, we denote by $c_p (dvol_\Sigma)$ the norm of the Sobolev embedding $W^{2, p} (\Sigma) \rightarrow C^0 (\Sigma)$. That is,
	\begin{equation*}
		c_p (dvol_\Sigma) := \underset{0 \neq f \in C^\infty (\Sigma)}{\sup} \frac{\|f\|_{L^\infty}}{\|f\|_{W^{2, p}}}.
	\end{equation*}
\end{definition}

\begin{lemma}(Compare McDuff and Salamon \cite[Proposition 3.5.3, p. 70]{mcduff} for an analogous proposition)
\label{difference}
	For any $(\tilde{f}_1, \tilde{f}_2) \in \mathcal{M} (c, p)$, there exists a neighborhood $\mathcal{U}$ in $\mathcal{M} (c, p)$, and $c > 0$, and $\delta_0 = \delta_0 (c, p, \Sigma_1, \Sigma_2, x_1, x_2, N, \mathcal{U}) > 0$, such that there exists $c_0$ that depends on $c, p, \Sigma_1, \Sigma_2, x_1, x_2, N, \mathcal{U}$, and the following holds for each pair of $(\delta, R) \in \mathcal{A} (\delta_0)$ and each $(f_1, f_2) \in \mathcal{U}$. For any $\xi \in W^{2, p} (\Sigma_1 \#_{\delta, R} \Sigma_2, (f^R)^{-1} TN)$, we have
	\begin{equation*}
		\|df^R\|_{L^p} \leq c_0,\quad \|\xi\|_{L^\infty} \leq c_0,\quad c_p (dvol_{\Sigma}) \leq c_0,
	\end{equation*}
	where $\Sigma$ denotes $\Sigma_1 \#_{\delta, R} \Sigma_2$, and
	\begin{equation*}
		\| d\mathcal{F}_{f^R} (\xi) - D_{f^R}\| \leq c \|\xi\|_{W^{2, p}}.
	\end{equation*}
	Here $\|\cdot\|$ denotes the operator norm.
\end{lemma}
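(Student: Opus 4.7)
The plan is to combine the uniform preliminary bounds coming from the pregluing construction with a Taylor-expansion argument for the nonlinear operator $\mathcal{F}_{f^R}$ around $\xi = 0$. First I would establish the three preliminary estimates. The bound $\|df^R\|_{L^p} \leq c_0$ is immediate from the definition of $\mathcal{M}(c,p)$ (which gives $\|df_i\|_{L^\infty} \leq c$) together with the formula (\ref{fR}): on the three regions the differential of $f^R$ is controlled by $c$, by derivatives of $\rho$ times $|\zeta^{0,\infty}|$ (which decay like $|z|$ or $1/(R^2|z|)$), and by $c$ again. Integrating against the weighted volume form yields the bound. The Sobolev embedding constant $c_p(dvol_\Sigma) \leq c_0$ is exactly Lemma \ref{sobolev_embedding}, and this in turn gives $\|\xi\|_{L^\infty} \leq c_0 \|\xi\|_{2,p,R}$, hence $\|\xi\|_{L^\infty}$ is controlled whenever $\xi$ is chosen in a bounded ball of $W^{2,p}$.

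For the main estimate $\|d\mathcal{F}_{f^R}(\xi) - D_{f^R}\| \leq c\|\xi\|_{W^{2,p}}$, I would compute $d\mathcal{F}_{f^R}(\xi)(\eta)$ explicitly using (\ref{operatorF}). Differentiating $\Phi_f(\xi)^{-1} P(\exp_f(\xi))$ in the direction $\eta \in W^{2,p}_{f^R}$ produces three types of contributions: (i) a term $\Phi_f(\xi)^{-1} \circ dP_{\exp_f(\xi)} \circ d\exp_f(\xi)(\eta)$, whose principal symbol is the Laplacian of $\eta$ transported back to $f^R$; (ii) a term from differentiating the parallel transport $\Phi_f(\xi)^{-1}$, which is bounded pointwise by $C|\xi||\eta|$ by the standard ODE estimate for parallel transport (see Appendix \ref{uniform_bound_proof}); and (iii) curvature-type lower-order terms. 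At $\xi = 0$ the sum collapses to the intrinsic formula (\ref{intrinsic}) for $D_{f^R}$.

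Subtracting, every term in $d\mathcal{F}_{f^R}(\xi) - D_{f^R}$ has coefficients that vanish at $\xi = 0$. By Taylor's theorem applied to the smooth dependence of $d\exp_y$, $d\Phi$, Christoffel symbols, and curvature tensors of $N$ on $\xi$, these coefficients are pointwise bounded by $C(c,p,N,\mathcal{U}) \cdot \|\xi\|_{L^\infty}$ when multiplied against $\eta$, $\nabla\eta$, or $\nabla^2\eta$. Passing to norms and applying Hölder's inequality with the weighted measure gives
\begin{equation*}
	\|d\mathcal{F}_{f^R}(\xi)\eta - D_{f^R}\eta\|_{0,p,R} \leq C \|\xi\|_{L^\infty} \|\eta\|_{2,p,R} \leq C c_0 \|\xi\|_{2,p,R} \|\eta\|_{2,p,R},
\end{equation*}
which is the claimed bound on the operator norm.

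The main obstacle will be keeping constants uniform in the neck parameters $(\delta,R)$. Two delicate points need care: first, on the neck region $\delta/R \leq |z| \leq 1/\delta R$ the weighted metric degenerates, so the Sobolev embedding constant and the $L^\infty$ control on $\xi$ must be invoked in the uniform form of Lemma \ref{sobolev_embedding}; second, the parallel transport $\Phi_f(\xi)$ is only well-defined when $|\xi|$ is below the injectivity radius of $N$, which forces us to restrict to a sufficiently small ball in $W^{2,p}$ — but this is harmless because the implicit function theorem will only be applied for small $\xi$. Once these uniform inputs are in hand, the quadratic-type estimate on $d\mathcal{F}_{f^R}(\xi) - D_{f^R}$ proceeds term by term exactly as in McDuff and Salamon \cite[Proposition 3.5.3]{mcduff}, with the Cauchy-Riemann operator replaced by the tension field and the complex structure replaced by the Levi-Civita connection on $N$.
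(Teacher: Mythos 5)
Your proposal follows essentially the same route as the paper's proof: both adapt McDuff--Salamon Proposition 3.5.3 by differentiating the defining identity $\Phi_{f^R}(\xi)\mathcal{F}_{f^R}(\xi) = P(\exp_{f^R}(\xi))$, identifying the error term from the variation of parallel transport (the paper's $\Psi_y$, your contribution (ii)), the remainder $\Phi^{-1}D_{\exp_{f^R}(\xi)}(E_{f^R}(\xi)\xi') - D_{f^R}\xi'$ (your contributions (i)+(iii)), and closing with the uniform pregluing bounds $\|df^R\|_{L^p}\le c_0$, the uniform Sobolev constant of Lemma \ref{sobolev_embedding}, and pointwise Taylor-type estimates on the geometric coefficients. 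The paper formalizes the two error pieces via the maps $E_y$ and $\Psi_y$ and states the final pointwise-to-$L^p$ step by "write out everything in coordinates," whereas your write-up reorganizes the same ingredients slightly; both correctly flag that the $\|\xi\|_{L^\infty}\le c_0$ hypothesis and the injectivity-radius constraint restrict to a small ball in $W^{2,p}$, which is all that is needed for the implicit function theorem.
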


\begin{proof}
	By choosing small enough $\mathcal{U}$, we know that there exists $c_0 > 0$ such that the pregluing $f^R$ satisfies
	\begin{equation*}
		\|df^R\|_{L^\infty} \leq c_0
	\end{equation*}
	for each $(f_1, f_2) \in \mathcal{U}$.
	
	Denote $\Sigma_1 \#_{\delta, R} \Sigma_2$ by $\Sigma$. Given $y \in N$ and $\xi \in T_y N$, we define the (bi)linear maps
	\begin{equation*}
		E_y (\xi): T_y N \rightarrow T_{\exp_y (\xi)} N, \quad \Psi_y (\xi): T_y N \times T_y N \rightarrow T_{\exp_y (\xi)} N
	\end{equation*}
	by
	\begin{equation*}
		E_y (\xi) \xi^\prime := \frac{d}{dt} \exp_y (\xi + t\xi^\prime) \mid_{t = 0}, \quad \Psi_y (\xi; \xi^\prime, \eta) := \nabla_t (\Phi_{f^R} (\xi + t\xi^\prime)\eta)\mid_{t = 0}.
	\end{equation*}
	Now differentiate the identity
	\begin{equation*}
		\Phi_{f^R} (\xi + t\xi^\prime) \mathcal{F}_{f^R} (\xi + t\xi^\prime) = P (\exp_{f^R} (\xi + t \xi^\prime))
	\end{equation*}
	covariantly at $t = 0$ to obtain
	\begin{equation*}
		\Phi_{f^R} (\xi) d\mathcal{F}_{f^R} (\xi) \xi^\prime + \Psi_{f^R} (\xi; \xi^\prime, \mathcal{F}_{f^R} (\xi)) = D_{\exp_{f^R} (\xi)} (E_{f^R} (\xi) \xi^\prime).
	\end{equation*}
	Thus we have the following formula for $d\mathcal{F}_{f^R}$:
	\begin{equation*}
		d\mathcal{F}_{f^R} (\xi) \xi^\prime = \Phi_{f^R} (\xi)^{-1} D_{\exp_{f^R}} (\xi) (E_{f^R} (\xi) \xi^\prime) - \Phi_{f^R} (\xi)^{-1} \Psi_{f^R} (\xi; \xi^\prime, \mathcal{F}_{f^R} (\xi)).
	\end{equation*}
	Choose a constant $c_1 > 0$ such that the inequalities
	\begin{equation*}
		|E_y (\xi)| \leq c_1,\quad |\Psi_y (\xi; \xi^\prime, \eta)| \leq c_1 |\xi||\xi^\prime||\eta|
	\end{equation*}
	hold for every $y \in N$, every $\xi \in T_y N $ such that $|\xi| \leq c_0$, and every $\eta \in T_y N$. We have the pointwise estimate
	\begin{equation*}
		|\Phi_{f^R} (\xi)^{-1} \Psi_{f^R} (\xi; \xi^\prime)| \leq c_1 |d\exp_{f^R} (\xi)| |\xi| |\xi^\prime|.
	\end{equation*}
	There is a constant $c_2$, depending only on $c_0$ and the metric on $N$, such that
	\begin{equation*}
		|d\exp_{f^R} (\xi)| \leq c_2 (|du| + |\nabla \xi|).
	\end{equation*}
	Hence
	\begin{equation*}
		\|\Phi_{f^R} (\xi)^{-1} \Psi_{f^R} (\xi; \xi^\prime)\|_{L^p} \leq c_1 c_2 (\|df^R\|_{L^p} + \|\nabla \xi\|_{L^p}) \|\xi\|_{L^\infty} \|\xi^\prime\|_{L^\infty}.
	\end{equation*}
	Since $\|df^R\|_{L^p} \leq c_0$, $\|\xi\|_{L^\infty} \leq c_0$, and $\|\xi\|_{L^\infty} \leq c_0 \|\xi\|_{W^{2, p}}$ it follows that
	\begin{equation*}
		\|\Phi_{f^R} (\xi)^{-1} \Psi_{f^R} (\xi; \xi^\prime)\|_{L^p} \leq c_3 \|\xi\|_{W^{2, p}} \|\xi^\prime\|_{W^{2, p}}.
	\end{equation*}
	Now we are left to estimate
	\begin{equation*}
		\| D_{\exp_f (\xi)} (E_f (\xi) \xi^\prime) - \Phi_f (\xi) D_f \xi^\prime\|_{L^p}.
	\end{equation*}
	For this part, we only need to write out everything in coordinates and compute directly.
\end{proof}

\printbibliography
\end{sloppypar}
\end{document}